\newcommand{\C}{\mathbb{C}}
\DeclareMathOperator{\Hom}{Hom}
\DeclareMathOperator{\End}{End}
\DeclareMathOperator{\Mod}{Mod}
\DeclareMathOperator{\mmod}{mod}
\DeclareMathOperator{\rad}{rad}
\DeclareMathOperator{\Ext}{Ext}
\theoremstyle{plain}
\newtheorem{theorem}{Theorem}[section]
\newtheorem*{theorem*}{Theorem}
\theoremstyle{definition}
\newtheorem{defn}[theorem]{Definition}
\newtheorem{exmp}[theorem]{Example} 
\newtheorem{remark}[theorem]{Remark}
\newtheorem{lemma}[theorem]{Lemma}
\newtheorem{corollary}[theorem]{Corollary}
\newtheorem{setup}[theorem]{Setup}
\newtheorem{proposition}[theorem]{Proposition}
\date{}
\begin{document}
\setlength{\parindent}{0pt}
\setlength{\parskip}{7pt}
\title[Almost split morphisms in subcategories of triangulated categories]{Almost split morphisms in subcategories of triangulated categories}
\author{Francesca Fedele}
\address{Dipartimento di Informatica - Settore di Matematica, ¨ Universit\`a degli Studi di
Verona, Strada le Grazie 15 - Ca' Vignal, I-37134 Verona, Italy}
\email{francesca.fedele@univr.it}
\keywords{Auslander-Reiten sequence, cluster category, Ext-injective, Ext-projective, right almost split morphism, syzygy, triangulated category.}
\subjclass[2020]{13F60, 16G70, 18G80}
 
\begin{abstract}
For a suitable triangulated category $\mathcal{T}$ with a Serre functor $S$ and a full precovering subcategory $\mathcal{C}$ closed under summands and extensions, an indecomposable object $C$ in $\mathcal{C}$ is called Ext-projective if Ext$^1(C,\mathcal{C})=0$. Then there is no Auslander-Reiten triangle in $\mathcal{C}$ with end term $C$. In this paper, we show that if, for such an object $C$, there is a minimal right almost split morphism $\beta:B\rightarrow C$ in $\mathcal{C}$, then $C$ appears in something very similar to an Auslander-Reiten triangle in $\mathcal{C}$: an essentially unique triangle in $\mathcal{T}$ of the form
\begin{align*}
    \Delta= X\xrightarrow{\xi} B\xrightarrow{\beta} C\rightarrow \Sigma X,
\end{align*}
where $X$ is an indecomposable not in $\mathcal{C}$ and $\xi$ is a $\mathcal{C}$-envelope of $X$.
Moreover, under some extra assumptions, we show that removing $C$ from $\mathcal{C}$ and replacing it with $X$ produces a new subcategory of $\mathcal{T}$ closed under extensions. We prove that this process coincides with the classic mutation of $\mathcal{C}$ with respect to the rigid subcategory of $\mathcal{C}$ generated by all the indecomposable $\Ext$-projectives in $\mathcal{C}$ apart from $C$.

When $\mathcal{T}$ is the cluster category of Dynkin type $A_n$ and $\mathcal{C}$ has the above properties, we give a full description of the triangles in $\mathcal{T}$ of the form $\Delta$ and show under which circumstances replacing $C$ by $X$ gives a new extension closed subcategory.
\end{abstract}
\maketitle
\section{Introduction}
For $\Lambda$ a finite-dimensional algebra, consider the abelian category $\mmod \Lambda$ of finitely generated (right) $\Lambda$-modules. Auslander-Reiten sequences are a very useful tool to study $\mmod \Lambda$. These are non-splitting short exact sequences in $\mmod \Lambda$ of the form
\begin{align*}
    0\rightarrow A\xrightarrow{\alpha} B\xrightarrow{\beta} C\rightarrow 0,
\end{align*}
that are ``as close as possible'' to split exact sequences. Important properties of such a sequence are that $A,\, C$ are indecomposable modules determining each other and the morphisms $\alpha,\,\beta$ determine all the irreducible morphisms from $A$ and those ending at $C$.

The theory of Auslander-Reiten sequences has been extended to general abelian categories in \cite{AR} and to the study of Auslander-Reiten sequences in their subcategories, conducted by Auslander and Smal\o\,  in \cite{AS}.

Taking inspiration from the above, Happel developed the theory of Auslander-Reiten triangles in triangulated categories in \cite{DH} and then J\o rgensen defined and studied Auslander-Reiten triangles in their non-triangulated subcategories in \cite{JP}. Let $k$ be a field, $\mathcal{T}$ a skeletally small $k$-linear $\Hom$-finite triangulated category with split idempotents having a Serre functor $S$. Let $\mathcal{C}\subseteq \mathcal{T}$ be a full subcategory closed under summands and extensions. Any indecomposable object $C$ in $\mathcal{C}$ has an Auslander-Reiten triangle in $\mathcal{T}$ of the form
\begin{align*}
    \tau C\rightarrow Y\rightarrow C\rightarrow\Sigma X,
\end{align*}
where $\tau C=S\Sigma^{-1} C$, see \cite[proposition I.2.3 and its proof]{RV}. The main theorem in \cite{JP} shows that, if $\Hom(C,\Sigma \mathcal{C})$ is non-zero, then there is an Auslander-Reiten triangle in $\mathcal{C}$ of the form
\begin{align*}
    A\rightarrow B\rightarrow C\rightarrow\Sigma A
\end{align*}
if and only if there is a $\mathcal{C}$-cover $A\rightarrow \tau C$.

Auslander-Reiten theory has proved to be a useful tool in several different contexts and it has been studied in depth and generalised in various different ways, see for example \cite{JP2}, \cite{Liu}, \cite{Liu2} and \cite{R}.

Here we focus on the objects for which the main theorem in \cite{JP} cannot be applied, \textbf{i.e.} the objects $C$ in $\mathcal{C}$ with $\Hom(C,\Sigma\mathcal{C})=0$, which we call \textit{$\Ext$-projectives}. Similarly, the objects for which the dual of the above theorem cannot be applied are called $\Ext$-\textit{injectives}.
Some of the results we prove about these objects and the triangles they appear in are inspired by the ones on $\Ext$-projective (and $\Ext$-injective) modules and the properties of the short exact sequences they appear in, proven by Kleiner in \cite{MK}.

Note that an $\Ext$-projective object $C$ cannot appear in an Auslander-Reiten triangle in $\mathcal{C}$ of the form
\begin{align*}
    A\rightarrow B\rightarrow C \xrightarrow{\gamma} \Sigma A,
\end{align*}
since $\gamma=0$ would be forced, contradicting Definition \ref{defnARsub} below. However, as shown in the following theorem, for a suitable subcategory $\mathcal{C}$, we can find something quite similar to an Auslander-Reiten triangle in $\mathcal{C}$.

{\bf Theorem A (=Theorem \ref{prop_mras}).}
{\em
Let $\beta:B\rightarrow C$ be a minimal right almost split morphism in $\mathcal{C}$ with $C$ $\Ext$-projective.
\begin{enumerate}[label=(\alph*)]
\item The triangle
\begin{align*}
    \Delta: X\xrightarrow{\xi} B\xrightarrow{\beta} C \rightarrow \Sigma X
\end{align*}
is such that $X$ is an indecomposable object not in $\mathcal{C}$ and $\xi$ is a $\mathcal{C}$-envelope of $X$.
\item In part (a), the end terms $X$ and $C$ determine each other. That is, suppose $\beta':B'\rightarrow C'$ is another minimal right almost split morphism in $\mathcal{C}$ with $C'$ $\Ext$-projective and extend it to a triangle: $X'\rightarrow B'\xrightarrow{\beta'} C'\rightarrow \Sigma X'$. Then $C'\cong C$ if and only if $X'\cong X$.
\end{enumerate}
}

For their similarity with Auslander-Reiten triangles, we call the triangles of the form $\Delta$ \textit{left-weak Auslander-Reiten triangles in $\mathcal{C}$}. 
Note that $\beta:B\rightarrow C$, and hence $\Delta$, exist in fairly general circumstances, for example if $C$ is indecomposable and $\mathcal{C}$ is functorially finite in $\mathcal{T}$, see \cite[propositions\ 2.10 and 2.11]{IY}.

In \cite{IY}, Iyama and Yoshino defined the mutation of a subcategory of $\mathcal{T}$ with respect to a rigid subcategory $\mathcal{D}$ of $\mathcal{T}$. Under some assumptions, mutating an extension closed subcategory $\mathcal{C}$ of $\mathcal{T}$ with respect to a rigid $\mathcal{D}\subseteq\mathcal{C}$, gives a new extension closed subcategory of $\mathcal{T}$, see \cite[theorem 3.3]{ZZ}. We introduce a similar process to this and show how, in some cases, removing the third term of a left-weak Auslander-Reiten triangle $\Delta$ in $\mathcal{C}$ and replacing it with the first term of $\Delta$, gives a new extension closed subcategory of $\mathcal{T}$.
Let $\text{Ind}$ denote the indecomposable objects of a subcategory.

{\bf Theorem B (=Theorem \ref{thm_mutation}).}
{\em
Assume $\mathcal{C}$ is functorially finite in $\mathcal{T}$ and $C\in \mathcal{C}$ is an indecomposable $\Ext$-projective. Then there is a left-weak Auslander-Reiten triangle in $\mathcal{C}$ of the form
\begin{align}\label{triangle2}
X\xrightarrow{\xi} B\xrightarrow{\beta} C\xrightarrow{\gamma}\Sigma X. \tag{$\star$}
\end{align}
Let $\widetilde{\mathcal{C}}$ be the additive subcategory with $\text{Ind }\widetilde{\mathcal{C}}=\text{Ind }(\mathcal{C})\setminus C$ and define $\mathcal{C}':=\text{add }(\widetilde{\mathcal{C}}\cup X)$.
\begin{enumerate}[label=(\alph*)]
     \item If $X\in P(\mathcal{C}')\cap I(\mathcal{C}')$, then $\mathcal{C}'$ is closed under extensions.
    \item If $\End(X)$ and $\End(C)$ are division rings and $\mathcal{C}'$ is closed under extensions, then $X\in I(\mathcal{C}')$.
    \item If $\End(C)$ is a division ring, $\mathcal{C}'$ is closed under extensions and $X\in P(\mathcal{C}')$, then $X\in I(\mathcal{C}')$.
\end{enumerate}
}

Moreover, we show that in some cases this process and the classic mutation from \cite{IY} coincide.

{\bf Theorem C (=Theorem \ref{thm_mutZZ_same}).}
{\em
In the setup of Theorem B, suppose that $\mathcal{T}$ is $2$-Calabi-Yau, $\mathcal{C}$ has finitely many indecomposables and $X$ is $\Ext$-projective in $\mathcal{C}'$.
Let $\mathcal{D}$ be the additive subcategory generated by the $\Ext$-projectives in $\widetilde{\mathcal{C}}$ and $\mu (\mathcal{C}; \mathcal{D})$ be the classic (backward) $\mathcal{D}$-mutation of $\mathcal{C}$. Then, we have
\begin{align*}
    \mu (\mathcal{C}; \mathcal{D})=\mathcal{C}',
\end{align*}
 and this is a subcategory of $\mathcal{T}$ closed under extensions.
}

{\bf Remark.}
We apply our results to $\mathcal{C}_{A_n}$, the cluster category of Dynkin type $A_n$.
By \cite{HJR}, a subcategory $\mathcal{C}\subseteq \mathcal{C}_{A_n}$ is closed under extensions and direct summands if and only if it corresponds to a so-called Ptolemy diagram of the regular $(n+3)$-gon $P$. Moreover, we show that indecomposable $\Ext$-projectives in such a $\mathcal{C}$ are \textit{dissecting diagonals} in the corresponding Ptolemy diagram, {\bf i.e.} those diagonals dividing $P$ into cells.

We apply Theorem A to this example to give a complete description of the left-weak Auslander-Reiten triangles in $\mathcal{C}$. We show that Theorem B can be applied to an indecomposable $\Ext$-projective $C$ in $\mathcal{C}$ if and only if $C$ borders two empty cells in the Ptolemy diagram corresponding to $\mathcal{C}$.
Moreover, note that $\mathcal{C}_{A_n}$ is $2$-Calabi-Yau and it has finitely many indecomposables. Hence, whenever $C\in\mathcal{C}$ corresponds to a dissecting diagonal bordering two empty cells, Theorem C implies that $\mathcal{C}'$ is the subcategory obtained by mutating $\mathcal{C}$ with respect to the additive subcategory of $\mathcal{C}$ generated by all the indecomposable $\Ext$-projectives in $\widetilde{\mathcal{C}}$. 

The paper is organized as follows. Section \ref{section2} consists of the setup for $\mathcal{T}$ and $\mathcal{C}$ and some definitions. In Section \ref{section3} we present $\Ext$-projectives and prove Theorem A. In Section \ref{section4} we prove Theorem B. In Section \ref{section_mutation_same} we recall the classic mutation and prove Theorem C. Finally, Section \ref{section6} is an application of our results to $\mathcal{C}_{A_n}$.

\section{Setup and some definitions}\label{section2}
We work in the following setup, where additive subcategory means full subcategory closed under isomorphisms, sums and summands.
\begin{setup}\label{setup}
Let $k$ be a field, $\mathcal{T}$ be a skeletally small $k$-linear triangulated category with split idempotents in which each $\Hom$ space is finite dimensional over $k$. Note that this implies that $\mathcal{T}$ is a Krull-Schmidt category  \cite[remark \ 1.2]{JP}. Assume that $\mathcal{T}$ has a Serre functor $S$, see \cite[section I.1]{RV}. Also, let $\mathcal{C}$ be an additive subcategory of $\mathcal{T}$ closed under extensions.
\end{setup}
We present some terminology that will be used in the following sections. By $\Sigma$ we always denote the suspension in the category $\mathcal{T}$.

\begin{defn}
Let $A,B,C$ be in $\mathcal{C}$. We say that
\begin{enumerate} [label=(\alph*)]
\item a morphism $\alpha: A\rightarrow B$ is \textit{left almost split in $\mathcal{C}$} if it is not a split monomorphism and for every $A'$ in $ \mathcal{C}$, every morphism $\alpha':A\rightarrow A'$ which is not a split monomorphism factors through $\alpha$, \textbf{i.e.} there exists a morphism $B\rightarrow A'$ such that the following diagram commutes:
\begin{align*}
\xymatrix @C=1.5em @R=1em{
A\ar[rr]^\alpha \ar[dr]_{\alpha'}& & B\ar@{-->}[ld]^{\exists};\\
& A' &
}
\end{align*}
\item a morphism $\beta:B\rightarrow C$ is \textit{right almost split in $\mathcal{C}$} if it is not a split epimorphism and for every $C'$ in $ \mathcal{C}$, every morphism $\gamma: C'\rightarrow C$ which is not a split epimorphism factors through $\beta$, \textbf{i.e.} there exists a morphism $C'\rightarrow B$ such that the following diagram commutes:
\begin{align*}
\xymatrix @C=1.5em @R=1em{
B\ar[rr]^\beta & & C.\\
& C'\ar@{-->}[lu]^{\exists}\ar[ru]_\gamma &
}
\end{align*}
\end{enumerate}
\end{defn}

\begin{defn}[{\cite[definition\ 1.3]{JP}}]\label{defnARsub}
A distinguished triangle in $\mathcal{T}$ of the form
    \begin{align*}
A\xrightarrow{\alpha} B\xrightarrow{\beta} C\xrightarrow{\gamma} \Sigma A
\end{align*}
with $A,B,C\in \mathcal{C}$ is an \textit{Auslander-Reiten triangle in $\mathcal{C}$} if the following are satisfied:
\begin{enumerate}
\item the morphism $\gamma$ is non-zero,
\item the morphism $\alpha$ is left almost split in $\mathcal{C}$,
\item the morphism $\beta$ is right almost split in $\mathcal{C}$.
\end{enumerate}
\end{defn}

\begin{remark}
Note that in the above definition, condition (1) is implied by each of the other two conditions.
\end{remark}

\begin{defn}
A morphism $\xi:X\rightarrow Y$ is \textit{right minimal} if $\xi\circ \varphi =\xi$, for   $\varphi: X\rightarrow X$, implies that $\varphi$ is an automorphism. \textit{Left minimal} morphisms are defined dually.
\end{defn}

\begin{defn}
A morphism in $\mathcal{C}$ is called \textit{minimal right almost split in $\mathcal{C}$} if it is both right minimal and right almost split in $\mathcal{C}$. Similarly, a morphism in $\mathcal{C}$ is \textit{minimal left almost split in $\mathcal{C}$} if it is both left minimal and left almost split in $\mathcal{C}$. 
\end{defn}

The following is well known for the module category case, see \cite[proposition V.1.4 and lemma V.1.7]{ARS}. We prove here the corresponding result for our setup.
\begin{lemma}\label{lemma_ras_ind}
\begin{enumerate}[label=(\alph*)]
\item Let $\beta:B\rightarrow C$ be right almost split in $\mathcal{C}$, then $C$ is indecomposable. Moreover, if $\beta$ is also right minimal and $\beta': B'\rightarrow C$ is another minimal right almost split morphism in $\mathcal{C}$, then there is an isomorphism $\varphi:B\rightarrow B'$ such that $\beta'\circ \varphi=\beta$.
\item Let $\alpha:A\rightarrow B$ be left almost split in $\mathcal{C}$, then $A$ is indecomposable.  Moreover, if $\alpha$ is also left minimal and $\alpha': A\rightarrow B'$ is another minimal left almost split morphism in $\mathcal{C}$, then there is an isomorphism $\varphi:B\rightarrow B'$ such that $\varphi\circ \alpha=\alpha'$.
\end{enumerate}
\end{lemma}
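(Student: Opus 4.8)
Since the two statements are dual, the plan is to prove part (a) in full and then deduce part (b) by passing to the opposite category: $\mathcal{T}^{\mathrm{op}}$ is again a triangulated category satisfying Setup \ref{setup}, and under the duality a right almost split (respectively right minimal) morphism in $\mathcal{C}$ becomes a left almost split (respectively left minimal) morphism in $\mathcal{C}^{\mathrm{op}}$. Thus every assertion in (b) is the image under this duality of the corresponding assertion in (a).

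To prove that $C$ is indecomposable, I would first note that $C\neq 0$, since the map to a zero object is a split epimorphism and $\beta$ is not. Supposing for contradiction that $C=C_1\oplus C_2$ with both summands nonzero, I would use that $\mathcal{C}$ is closed under summands to place $C_1,C_2$ in $\mathcal{C}$, and then argue that neither canonical inclusion $\iota_j\colon C_j\to C$ is a split epimorphism: a splitting would exhibit $C$ as a direct summand of its own proper summand $C_j$, which the Krull-Schmidt property of $\mathcal{T}$ (recorded in Setup \ref{setup}) forbids. Applying the right almost split property of $\beta$ to each $\iota_j$ gives $g_j\colon C_j\to B$ with $\beta g_j=\iota_j$; assembling these into $g=(g_1\ g_2)\colon C_1\oplus C_2\to B$ yields $\beta g=\mathrm{id}_C$, so $\beta$ is a split epimorphism, contradicting right almost splitness.

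For the uniqueness statement, suppose $\beta'\colon B'\to C$ is a second minimal right almost split morphism. Neither $\beta$ nor $\beta'$ is a split epimorphism, so each factors through the other: applying the right almost split property of $\beta'$ to $\beta$, and of $\beta$ to $\beta'$, produces $\varphi\colon B\to B'$ and $\psi\colon B'\to B$ with $\beta'\varphi=\beta$ and $\beta\psi=\beta'$. Substituting gives $\beta(\psi\varphi)=\beta$ and $\beta'(\varphi\psi)=\beta'$, so the right minimality of $\beta$ and of $\beta'$ forces $\psi\varphi$ and $\varphi\psi$ to be automorphisms. Consequently $\varphi$ is at once a split monomorphism and a split epimorphism, hence an isomorphism (its left and right inverses coincide), and it satisfies $\beta'\varphi=\beta$ as required.

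The arguments are essentially diagram chases valid in any additive category, so I do not expect a serious obstacle; the only non-formal inputs are the use of the Krull-Schmidt property to exclude the inclusions $\iota_j$ being split epimorphisms, and the elementary fact that a morphism which is both a split monomorphism and a split epimorphism is invertible. Care is needed only in tracking the directions of the two factorizations and in confirming at the outset that $C\neq 0$.
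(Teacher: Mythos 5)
Your proposal is correct, and the uniqueness part of your argument (mutual factorization of $\beta$ and $\beta'$ through each other, then right minimality forcing $\varphi\circ\phi$ and $\phi\circ\varphi$ to be automorphisms) is exactly the paper's proof. Where you genuinely diverge is the indecomposability of $C$: the paper proves that $\End(C)$ is a local ring, by showing that the set of endomorphisms of $C$ without right inverses is closed under addition (any two such factor through $\beta$, and a right inverse of their sum would produce a right inverse of $\beta$), and then invoking \cite[proposition 15.15]{AF}. You instead suppose $C=C_1\oplus C_2$ with both summands nonzero, use closure of $\mathcal{C}$ under summands to place $C_j\in\mathcal{C}$, rule out the inclusions $\iota_j$ being split epimorphisms via the Krull-Schmidt property of $\mathcal{T}$, and assemble the factorizations $\beta g_j=\iota_j$ into a section $g=(g_1\ g_2)$ of $\beta$, contradicting that $\beta$ is not a split epimorphism. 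Both arguments are valid in Setup \ref{setup}; yours is more elementary and needs no external citation, but it leans on Krull-Schmidt (uniqueness of finite direct-sum decompositions), whereas the paper's argument yields the a priori stronger conclusion that $\End(C)$ is local and would survive in additive settings where Krull-Schmidt is unavailable (in the present Hom-finite Krull-Schmidt setting the two notions coincide). Finally, deducing (b) by passing to $\mathcal{T}^{\mathrm{op}}$ is a legitimate formalization of the paper's ``this follows by a similar argument,'' and indeed none of the structure that could be delicate under duality (the Serre functor, extension closure) is actually used in the proof.
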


\begin{proof}
(a) In order to prove that $C$ is indecomposable, it is enough to prove that $\End C$,  the endomorphism ring of $C$, is local.
Let $\gamma_0, \gamma_1:C\rightarrow C$ be elements in $\End C$ without right inverses, \textbf{i.e.} $\gamma_0,\gamma_1$ are not split epimorphisms.
Then there are $\gamma_0 ', \gamma_1 ' :C\rightarrow B$ such that $\gamma_i =\beta \circ \gamma_i '$ for $i=0,1$ and we get $\gamma_0 + \gamma_1= \beta\circ (\gamma_0 '+\gamma_1 ')$. If $\gamma_0 + \gamma_1$ had a right inverse $\delta$, then 
\begin{align*}
1_C=(\gamma_0+\gamma_1)\circ \delta=\beta\circ (\gamma_0 '+ \gamma_1')\circ \delta,
\end{align*}
so that $\beta$ would also have a right inverse, this is a contradiction. Hence the set of elements of $\End C$ without right inverses is closed under addition and so $\End C$ is local by  \cite[proposition\ 15.15]{AF}.

Assume now that $\beta$ is also right minimal and $\beta': B'\rightarrow C$ is another minimal right almost split morphism in $\mathcal{C}$. Then there are morphisms $\varphi: B\rightarrow B',\, \phi:B'\rightarrow B$ such that $\beta'\circ \varphi=\beta$ and $\beta\circ\phi=\beta'$. Then $\beta=\beta\circ \phi\circ \varphi$ and $\beta'=\beta'\circ \varphi\circ\phi$. By right minimality of $\beta$ and $\beta'$, it follows that $\varphi\circ \phi$ and $\phi\circ\varphi$ are isomorphisms and so $\varphi$ is an isomorphism.

(b) This follows by a similar argument.
\end{proof}

\begin{defn}
[{\cite[definition \ 1.4]{JP}}]
Let $X\in\mathcal{T}$. A $\mathcal{C}$\textit{-precover} of $X$ is a morphism of the form $\alpha_X :A_X\rightarrow X$ with $A_X\in \mathcal{C}$ such that every morphism $\alpha':A'\rightarrow X$ with $A'\in \mathcal{C}$ factorizes as:
\begin{align*}
\xymatrix{
A'\ar[rr]^{\alpha'} \ar@{-->}[dr]_{\exists}& & X.\\
& A_X \ar[ru]_{\alpha_X} &
}
\end{align*}
A $\mathcal{C}$\textit{-cover} of $X$ is a $\mathcal{C}$-precover of $X$ which is also a right minimal morphism.
Note that given a $\mathcal{C}$-precover, we can obtain a $\mathcal{C}$-cover by dropping superfluous direct summands, see \cite[section 1]{IY}.
The dual notions of precovers and covers are \textit{preenvelopes} and \textit{envelopes} respectively.
\end{defn}

\begin{defn}
The subcategory $\mathcal{C}$ of $\mathcal{T}$ is called \textit{precovering} if every object in $\mathcal{T}$ has a $\mathcal{C}$-precover. Dually, $\mathcal{C}$ is \textit{preenveloping} if every object in $\mathcal{T}$ has a $\mathcal{C}$-preenvelope. If $\mathcal{C}$ is both precovering and preenveloping, we say that it is \textit{functorially finite}.
\end{defn}

\begin{defn}[{\cite[section I.2]{RV}}]\label{defn_AR_exist}
Since $\mathcal{T}$ has a Serre functor $S$, we can consider the functor $\tau:=S\Sigma^{-1}:\mathcal{T}\rightarrow\mathcal{T}$.  The functor $\tau$ is called \textit{Auslander-Reiten translation} and it is invertible with $\tau^{-1}=S^{-1}\Sigma$. Moreover, if $X$ is an indecomposable in $\mathcal{T}$, then there are Auslander-Reiten triangles in $\mathcal{T}$ of the form
\begin{align*}
X\rightarrow Y\rightarrow \tau^{-1}X\rightarrow \Sigma X \text{ and }
\tau X\rightarrow Z\rightarrow X\rightarrow \Sigma (\tau X).
\end{align*}
\end{defn}

\section{Ext-projectives and weak Auslander-Reiten triangles in $\mathcal{C}$}\label{section3}
In this section we introduce $\Ext$-projective (respectively $\Ext$-injective) objects in $\mathcal{C}$. We study the properties of the triangles they appear in, that we will call left-weak (respectively right-weak) Auslander-Reiten triangles in $\mathcal{C}$.
\begin{remark}
When $\mathcal{T}=D(\Lambda)$ is the derived category of some algebra $\Lambda$, we know that for every $X,\,Y\in\Mod \Lambda$ we have $\Hom_{D(\Lambda)}(X,\Sigma Y)\cong \Ext^1 _\Lambda (X,Y)$,  \cite[section\ I.6]{H}. In the general case, for $\mathcal{T}$ satisfying our setup, we define $\Ext^1(X,Y):=\Hom_{\mathcal{T}}(X,\Sigma Y)$ for all $X,\,Y\in \mathcal{T}$.
\end{remark}

\begin{defn}
An object $C\in \mathcal{C}$ is called $\Ext$\textit{-injective} if $\Ext^1(A,C)=0$ for all $A\in\mathcal{C}$. An object $D\in\mathcal{C}$ is called $\Ext$\textit{-projective} if $\Ext^1(D,A)=0$ for all $A\in\mathcal{C}$.
\end{defn}

\begin{lemma}\label{lemma_proj1}
\begin{enumerate}[label=(\alph*)]
\item Let $C\in \mathcal{C}$ be an indecomposable $\Ext$-projective object. For any non-split triangle $X\xrightarrow{\xi} B\xrightarrow{\beta} C\xrightarrow{\gamma\neq 0} \Sigma X$ with $B\in \mathcal{C}$, the morphism $\xi$ is a $\mathcal{C}$-envelope of $X$. If $\beta$ is a right minimal morphism, then $X$ is indecomposable.
\item Let $A\in\mathcal{C}$ be an indecomposable $\Ext$-injective object. For any non-split triangle $A\xrightarrow{\alpha} B\xrightarrow{\beta} Z\xrightarrow{\zeta\neq 0}\Sigma A$ with $B\in \mathcal{C}$, the morphism $\beta$ is a $\mathcal{C}$-cover of $Z$. If $\alpha$ is a left minimal morphism, then $Z$ is indecomposable.
\end{enumerate}
\end{lemma}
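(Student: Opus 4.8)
I would prove part (a) in full and deduce (b) by the formal dual, so I concentrate on (a), whose two assertions call for quite different arguments. For the envelope claim I would first show that $\xi$ is a $\mathcal{C}$-preenvelope and then that it is left minimal. Applying the cohomological functor $\Hom(-,A')$ with $A'\in\mathcal{C}$ to the triangle yields the exact sequence $\Hom(B,A')\xrightarrow{\xi^{*}}\Hom(X,A')\to\Hom(\Sigma^{-1}C,A')$, and the last term is $\Hom(\Sigma^{-1}C,A')\cong\Hom(C,\Sigma A')=\Ext^{1}(C,A')=0$ since $C$ is $\Ext$-projective. Hence $\xi^{*}$ is surjective, which is exactly the statement that every morphism $X\to A'$ into $\mathcal{C}$ factors through $\xi$. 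For left minimality I would take $\psi\colon B\to B$ with $\psi\xi=\xi$; then $(\psi-1_{B})\xi=0$, so $\psi-1_{B}=\theta\beta$ for some $\theta\colon C\to B$. Here $\beta\theta\in\End C$ cannot be invertible, for otherwise $\beta$ would be a split epimorphism and the triangle would split, contradicting $\gamma\neq0$. As $C$ is indecomposable, $\End C$ is local and finite dimensional, so its radical is nilpotent; thus $\beta\theta$, and hence $\theta\beta$, is nilpotent, and $\psi=1_{B}+\theta\beta$ is invertible. So $\xi$ is left minimal, and with the preenvelope property it is a $\mathcal{C}$-envelope.

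The indecomposability of $X$ is the delicate part and I expect it to be the main obstacle. I would first record two minimality-transfer facts, valid in any triangle $U\xrightarrow{a}V\xrightarrow{b}W\xrightarrow{c}\Sigma U$: if $b$ is right minimal then $c$ is left minimal, and if $a$ is left minimal then $c$ is right minimal. Each follows from the observation that a vanishing component of $c$ along a nonzero direct summand splits a contractible triangle off $U\to V\to W\to\Sigma U$, contradicting the assumed minimality of $b$ (respectively of $a$). Applied to $(\xi,\beta,\gamma)$, using that $\xi$ is left minimal (just shown) and that $\beta$ is assumed right minimal, this makes $\gamma$ simultaneously left and right minimal.

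Now suppose $X=X_{1}\oplus X_{2}$ with idempotents $e_{1}$ and $e_{2}=1-e_{1}$. Since $\xi$ is a $\mathcal{C}$-preenvelope and $B\in\mathcal{C}$, each $\xi e_{i}\colon X\to B$ factors as $\sigma_{i}\xi$, and completing the pair $(e_{i},\sigma_{i})$ by axiom TR3 produces $\tau_{i}\colon C\to C$ with $\gamma\tau_{i}=\Sigma e_{i}\circ\gamma$. Then $\gamma(\tau_{1}+\tau_{2})=\gamma$, so $\tau_{1}+\tau_{2}$ is an isomorphism by right minimality of $\gamma$; as $\End C$ is local, one of the $\tau_{i}$, say $\tau_{1}$, is an isomorphism. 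But then $\gamma=\Sigma e_{1}\circ\gamma\tau_{1}^{-1}$ factors through the summand $\Sigma X_{1}$, whence $\Sigma e_{1}\circ\gamma=\gamma$, and left minimality of $\gamma$ forces the idempotent $\Sigma e_{1}$ to be invertible, i.e. $X_{2}=0$. Hence $X$ is indecomposable. The truly routine parts are the homological computation giving the preenvelope and the nilpotency argument for left minimality; the work is concentrated in the two minimality-transfer facts and in lifting the idempotents on $X$ through TR3 so that the minimality of $\gamma$ can be brought to bear. Part (b) then follows by dualizing throughout.
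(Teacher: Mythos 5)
Your proposal is correct, but it takes a genuinely different route from the paper's proof, most notably for the indecomposability of $X$. The preenvelope step is identical to the paper's (apply $\Hom_{\mathcal{T}}(-,A')$ to the rotated triangle and use $\Ext$-projectivity of $C$). For left minimality of $\xi$, the paper decomposes $B$ into indecomposables, notes that each component of $\beta$ lies in the radical because $\beta$ is not a split epimorphism, and then asserts without detail that this forces $\xi$ to be left minimal; your argument --- writing $\psi-1_B=\theta\beta$, observing that $\beta\theta$ cannot be invertible (else $\beta$ splits and $\gamma=0$), hence is nilpotent in the local finite-dimensional ring $\End C$, so that $\psi=1_B+\theta\beta$ is invertible --- is more self-contained and supplies exactly the detail the paper leaves implicit. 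For indecomposability, the paper decomposes $X=X_1\oplus\dots\oplus X_r$, invokes uniqueness of $\mathcal{C}$-envelopes (citing Kleiner) to split $\xi$ as a direct sum of envelopes $\xi_i\colon X_i\to B_i$, completes each to a triangle, and compares the direct sum with the original triangle, using indecomposability of $C$ together with the auxiliary Lemma \ref{lemma_rm} (restrictions of right minimal morphisms are right minimal) to kill all summands but one. You instead transfer minimality to the connecting morphism --- $\beta$ right minimal makes $\gamma$ left minimal, and $\xi$ left minimal makes $\gamma$ right minimal, both valid Krull-Schmidt facts whose justification via vanishing components and split-off contractible summands is sound --- and then lift an idempotent $e_i$ of $X$ through the preenvelope property and TR3 to $\tau_i\in\End C$ with $\gamma\tau_i=\Sigma e_i\circ\gamma$; right minimality of $\gamma$ makes $\tau_1+\tau_2$ invertible, locality of $\End C$ makes some $\tau_i$ invertible, and left minimality of $\gamma$ then forces the idempotent $\Sigma e_i$ to be an isomorphism, so the decomposition is trivial. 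Your route avoids the appeal to uniqueness of envelopes and to direct-sum decompositions of triangles, at the cost of proving the two minimality-transfer lemmas, which are of independent interest; the paper's route gives slightly more structural information (the triangle itself decomposes as a sum of triangles over the envelopes $\xi_i$). Both arguments ultimately hinge on $\mathcal{T}$ being Krull-Schmidt and on $\End C$ being local.
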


We present a lemma that we will use to prove Lemma \ref{lemma_proj1}.

\begin{lemma}\label{lemma_rm}
Suppose $\xi=(\xi', \xi''): X'\oplus X''\rightarrow Y$ is a right minimal morphism in $\mathcal{T}$. Then $\xi':X'\rightarrow Y$ is right minimal.
\end{lemma}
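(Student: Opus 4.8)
The plan is to test right minimality of $\xi'$ against an arbitrary endomorphism of $X'$ by lifting it to a block-diagonal endomorphism of $X'\oplus X''$ and then invoking the right minimality of $\xi$. Concretely, suppose $\varphi':X'\rightarrow X'$ satisfies $\xi'\circ\varphi'=\xi'$; the goal is to show $\varphi'$ is an automorphism. First I would form $\varphi:=\begin{pmatrix}\varphi' & 0\\ 0 & 1_{X''}\end{pmatrix}:X'\oplus X''\rightarrow X'\oplus X''$. Using the row description $\xi=(\xi',\xi'')$, a one-line computation gives $\xi\circ\varphi=(\xi'\circ\varphi',\,\xi'')=(\xi',\xi'')=\xi$, where the last equality uses the hypothesis $\xi'\circ\varphi'=\xi'$. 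Since $\xi$ is right minimal, this forces $\varphi$ to be an automorphism of $X'\oplus X''$.

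It then remains to descend the invertibility of $\varphi$ to $\varphi'$. I would write $\varphi^{-1}=\begin{pmatrix}a & b\\ c & d\end{pmatrix}$ and expand the two relations $\varphi\circ\varphi^{-1}=1$ and $\varphi^{-1}\circ\varphi=1$. Because $\varphi$ is block-diagonal with a lower-right identity block, the bottom rows of these equations are satisfied automatically, and the top-left entries read off as $\varphi'\circ a=1_{X'}$ and $a\circ\varphi'=1_{X'}$. Thus $a$ is a two-sided inverse of $\varphi'$, so $\varphi'$ is an automorphism, which is exactly the definition of right minimality for $\xi'$.

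I do not expect a genuine obstacle, since the argument is purely formal: it uses only the additive (matrix) structure of $\mathcal{T}$ and never the triangulated structure, the Krull--Schmidt property, or the Serre functor. The single point needing a little care is the bookkeeping in the second step, namely checking that the block-diagonal shape of $\varphi$ really collapses the four matrix equations down to the pair $\varphi'\circ a=1_{X'}$ and $a\circ\varphi'=1_{X'}$; but this is routine. I would also remark in passing that the same argument applied to a left minimal morphism $Y\rightarrow X'\oplus X''$ yields the dual statement, should it be needed elsewhere.
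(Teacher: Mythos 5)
Your proof is correct and is essentially the paper's own argument: the same block-diagonal lift $\varphi=\begin{pmatrix}\varphi' & 0\\ 0 & 1_{X''}\end{pmatrix}$, the same computation $\xi\circ\varphi=\xi$, and the same appeal to right minimality of $\xi$. The only difference is that you spell out, via the matrix form of $\varphi^{-1}$, the final step that invertibility of $\varphi$ forces invertibility of $\varphi'$, which the paper states without detail.
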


\begin{proof}
Consider $\varphi':X'\rightarrow X'$ such that $\xi'\circ \varphi' =\xi'$. Taking
\begin{align*}
    \varphi:=\begin{pmatrix} \varphi' & 0\\ 0& 1_{X''} \end{pmatrix}:X'\oplus X''\rightarrow X'\oplus X'',
\end{align*}
we have
\begin{align*}
    \xi\circ \varphi=(\xi'\circ \varphi', \xi''\circ 1_{X''})=(\xi',\xi'')=\xi.
\end{align*}
As $\xi$ is right minimal, then $\varphi$ is an isomorphism and hence $\varphi'$ is also an isomorphism, meaning that $\xi'$ is right minimal.
\end{proof}

\begin{proof}[Proof of Lemma \ref{lemma_proj1}]
(a) 
Let $D\in\mathcal{C}$ and apply $\Hom_{\mathcal{T}}(-, D)$ to the triangle $\Sigma^{-1} C\xrightarrow{-\Sigma^{-1} \gamma} X\xrightarrow{\xi} B\xrightarrow{\beta}C$ to obtain the exact sequence:
\begin{align*}
    \Hom_{\mathcal{T}}(B, D)\xrightarrow{\Hom_{\mathcal{T}}(\xi, D)} \Hom_{\mathcal{T}}(X, D)\rightarrow \Hom_{\mathcal{T}}(\Sigma^{-1} C, D).
\end{align*}
Since $C$ is $\Ext$-projective in $\mathcal{C}$, then $\Hom_{\mathcal{T}}(C, \Sigma D)=0$ and hence $\Hom_{\mathcal{T}}(\Sigma^{-1} C, D)=0$. Then $\Hom_{\mathcal{T}}(\xi, D)$ is surjective and so every $\eta\in \Hom_{\mathcal{T}}(X, D)$ factors as $\eta=\epsilon\circ \xi$ for some $\epsilon\in \Hom_{\mathcal{T}}(B, D)$. Since this is true for every $D\in \mathcal{C}$, it follows that $\xi$ is a $\mathcal{C}$-preenvelope of $X$.

Since $C$ is indecomposable, we can write $\beta=(\beta_1,\dots,\,\beta_t):B=B_1\oplus\dots\oplus B_t\rightarrow C$, where $B_1,\dots,\, B_t$ are indecomposable. Then, by \cite[proposition 3.5, appendix]{A}, each $\beta_i$ is either an isomorphism or it is in $\rad_\mathcal{T}$. Since the triangle extending $\xi$ is not split, then $\beta$ is not a split epimorphism and each $\beta_i$ is in the radical. Hence $\beta$ is in the radical. This implies that $\xi$ is left minimal and hence it is a $\mathcal{C}$-envelope of $X$.

Suppose now that $\beta$ is a right minimal morphism and let $X=X_1\oplus \dots\oplus X_r$ be the indecomposable decomposition of $X$. Note that $\xi$ is the direct sum of $\mathcal{C}$-envelopes $\xi_i :X_i\rightarrow B_i$, for $i=1,\dots, r$. In fact, by \cite[section\ 1]{MK}, $\mathcal{C}$-envelopes are unique up to isomorphism, so the direct sum of $\mathcal{C}$-envelopes of $X_i$'s has to be isomorphic to $\xi$. Then, for each $i=1,\dots, r$ we have commutative diagram
\begin{align*}
    \xymatrix{
    X_i\ar[r]^{\xi_i}\ar[d]^{\iota_i} & B_i \ar[d]^{\overline{\iota_i}}\\
    X\ar[r]^{\xi} & B,
    }
\end{align*}
where $\iota_i,\, \overline{\iota_i}$ are the inclusions. Completing $\xi_i$ to a triangle, we get a commutative diagram
\begin{align}\label{fig:triangles_i}
    \xymatrix{
    X_i\ar[r]^{\xi_i}\ar[d]^{\iota_i} & B_i\ar[r]^{\delta_i}\ar[d]^{\overline{\iota_i}} & Z_i \ar[r]^{\zeta_i}\ar[d]^{\eta_i}& \Sigma X_i\ar[d]^{\Sigma \iota_i}\\
    X\ar[r]^\xi &B\ar[r]^\beta & C\ar[r]^\gamma &\Sigma X,
    }
\end{align}
where $\eta_i$ exists by the axioms of triangulated categories. Note that the direct sum for $i=1,\dots, r$ of triangles of the first row of (\ref{fig:triangles_i}) is isomorphic to the triangle in the second row. In particular, $C\cong Z_1\oplus \dots\oplus Z_r$, but since $C$ is indecomposable, without loss of generality we have $C\cong Z_1$ and $Z_i=0$ for $i\neq 1$.

Note that since $\beta$ is right minimal, by Lemma \ref{lemma_rm} so is its restriction to $B_i$, say $\beta_i:B_i\rightarrow C$.
For $i\neq 1$, we then have $\beta_i=\beta\circ\overline{\iota_i}=\eta_i\circ\delta_i=0$ right minimal and so $B_i=0$. But then, as $\xi_i$ is an isomorphism, it follows that $X_i=0$ and hence $X\cong X_1$ is indecomposable.

(b) This is proven in a similar way.
\end{proof}

\begin{theorem}\label{prop_mras}
Let $\beta:B\rightarrow C$ be a minimal right almost split morphism in $\mathcal{C}$ with $C$ $\Ext$-projective.
\begin{enumerate}[label=(\alph*)]
\item The triangle
\begin{align}\label{triangle}
X\xrightarrow{\xi} B\xrightarrow{\beta} C\xrightarrow{\gamma}\Sigma X \tag{$\star$}
\end{align}
is such that $X$ is an indecomposable not in $\mathcal{C}$ and $\xi$ is a $\mathcal{C}$-envelope of $X$.
\item If $\beta': B'\rightarrow C'$ is a minimal right almost split morphism in $\mathcal{C}$ with $C'$ $\Ext$-projective, then $C'\cong C$ if and only if $X'\cong X$, where $X'\xrightarrow{\xi'}B'\xrightarrow{\beta'}C'\xrightarrow{\gamma'}\Sigma X'$ is the triangle obtained by extending $\beta'$.
\end{enumerate}
\end{theorem}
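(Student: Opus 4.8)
For part (a), the plan is to first produce the triangle and then verify its stated properties using the two preliminary lemmas. Since $\beta:B\rightarrow C$ is minimal right almost split in $\mathcal{C}$, by Lemma \ref{lemma_ras_ind}(a) the object $C$ is indecomposable, and being right almost split, $\beta$ is not a split epimorphism. I would complete $\beta$ to a distinguished triangle $X\xrightarrow{\xi} B\xrightarrow{\beta} C\xrightarrow{\gamma}\Sigma X$ in $\mathcal{T}$, which exists by the axioms of triangulated categories. The key observation is that this triangle is non-split: if it were split, then $\beta$ would be a split epimorphism, contradicting that $\beta$ is right almost split. Equivalently, $\gamma\neq 0$. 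Now I am exactly in the situation of Lemma \ref{lemma_proj1}(a), with $C$ indecomposable $\Ext$-projective, $B\in\mathcal{C}$, and $\beta$ right minimal. That lemma directly gives that $\xi$ is a $\mathcal{C}$-envelope of $X$ and that $X$ is indecomposable. The only remaining claim in (a) is that $X\notin\mathcal{C}$. Here I would argue by contradiction: if $X\in\mathcal{C}$, then $\gamma\in\Hom_{\mathcal{T}}(C,\Sigma X)=\Ext^1(C,X)=0$ because $C$ is $\Ext$-projective and $X\in\mathcal{C}$; but $\gamma\neq 0$, a contradiction.

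For part (b), I would set up both triangles and exploit the essential uniqueness supplied by Lemmas \ref{lemma_ras_ind} and \ref{lemma_proj1}. For the forward direction, suppose $C'\cong C$. By Lemma \ref{lemma_ras_ind}(a), any two minimal right almost split morphisms ending at the same object (up to isomorphism) have isomorphic source objects compatible with the morphisms; that is, there is an isomorphism $\varphi:B\rightarrow B'$ with $\beta'\circ\varphi=\beta$ (after identifying $C'$ with $C$ via the given isomorphism). Completing to triangles and using that $\varphi$ and the identity on $C$ form a morphism of triangles, the five lemma (or rather the triangulated analogue: a morphism of triangles with two isomorphic components has the third an isomorphism) yields $X\cong X'$.

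For the reverse direction, suppose $X'\cong X$. Both triangles are non-split with indecomposable $\Ext$-projective third terms and right-minimal maps $\beta,\beta'$, and by part (a) the maps $\xi,\xi'$ are $\mathcal{C}$-envelopes of $X,X'$ respectively. Since $\mathcal{C}$-envelopes are unique up to isomorphism (as recorded in the proof of Lemma \ref{lemma_proj1}), an isomorphism $X\xrightarrow{\sim}X'$ induces an isomorphism $B\xrightarrow{\sim}B'$ commuting with $\xi,\xi'$, giving a morphism of the first two terms of the triangles; completing this to a morphism of triangles and again invoking the triangulated five lemma forces $C\cong C'$. The main obstacle I anticipate is the reverse direction: I must be careful that the isomorphism of envelopes genuinely extends to a morphism of the full triangles so that the third map can be concluded to be an isomorphism, and I should confirm the completed map on $C,C'$ is itself an isomorphism rather than merely a map — this is where the essential uniqueness of the envelope, together with non-splitting and right-minimality, does the real work.
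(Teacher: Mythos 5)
Your proposal is correct and follows essentially the same route as the paper's proof: Lemma \ref{lemma_ras_ind} for the indecomposability of $C$ and the uniqueness of minimal right almost split morphisms, Lemma \ref{lemma_proj1}(a) for the envelope property and indecomposability of $X$, $\Ext$-projectivity of $C$ to rule out $X\in\mathcal{C}$, and in part (b) the uniqueness of $\mathcal{C}$-envelopes plus the axioms of triangulated categories and the $5$-Lemma in both directions. The only cosmetic difference is the order of steps in (a) — you establish $\gamma\neq 0$ directly from $\beta$ not being a split epimorphism and then deduce $X\notin\mathcal{C}$, while the paper runs the contradiction the other way — but the logic is equivalent.
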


\begin{proof}
(a) Note that since $\beta$ is right almost split in $\mathcal{C}$, then $C$ is indecomposable by Lemma \ref{lemma_ras_ind}.
If $X$ was in $\mathcal{C}$, as $C$ is $\Ext$-projective, we would have $\Ext^{1}(C,X)=\Hom (C,\Sigma X)=0$ and hence $\gamma=0$ and the triangle (\ref{triangle}) splitting, contradicting $\beta$ being right almost split. Hence $X\not\in \mathcal{C}$ and $\gamma\neq 0$. Then, by Lemma \ref{lemma_proj1}, it follows that $\xi$ is a $\mathcal{C}$-envelope of $X$ and, since $\beta$ is right minimal, $X$ is indecomposable.

(b) Assume now that $\beta':B'\rightarrow C'$ is a minimal right almost split morphism in $\mathcal{C}$ with $C'$ $\Ext$-projective, and extend it to a triangle:
\begin{align*}
X'\xrightarrow{\xi'} B'\xrightarrow{\beta'} C'\xrightarrow{\gamma'}\Sigma X'.
\end{align*}
By the argument above, $X'\not\in\mathcal{C}$ is indecomposable and $\xi'$ is a $\mathcal{C}$-envelope of $X'$.

Suppose first that $C'\cong C$, say that $\varphi: C\rightarrow C'$ is an isomorphism. Since $\beta'$ and $\varphi\circ \beta$ are minimal right almost split morphisms with codomain $C'$, it follows from Lemma \ref{lemma_ras_ind} that there is an isomorphism $\psi: B\rightarrow B'$ with $\beta'\circ\psi=\varphi\circ \beta$.
By the axioms of triangulated categories, there is a morphism $\rho:X\rightarrow X'$ making the following diagram commutative:
\begin{align*}
    \xymatrix{
    X\ar[r]^\xi \ar[d]^\rho & B\ar[r]^\beta \ar[d]^\psi & C\ar[r]^\gamma \ar[d]^\varphi &\Sigma X\ar[d]^{\Sigma\rho} \\
    X'\ar[r]^{\xi' }&B'\ar[r]^{\beta'}& C'\ar[r]^{\gamma'} &\Sigma X'.
    }
\end{align*}
By the $5$-Lemma \cite[exercise\ 10.2.2]{W}, it follows that $\rho$ is an isomorphism, so that $X\cong X'$.

Suppose now that $X\cong X'$, say that $\rho:X\rightarrow X'$ is an isomorphism. Since a $\mathcal{C}$-envelope of $X$ is unique up to isomorphism and $\xi$, $\xi'\circ\rho$ both are $\mathcal{C}$-envelopes of $X$, there exists an isomorphism $\psi:B\rightarrow B'$ such that $\psi\circ \xi=\xi'\circ \rho$. Then, by the axioms of triangulated categories and the $5$-Lemma, there is an isomorphism $\varphi$ between $C$ and $C'$.
\end{proof}

We state, without proof, the dual of Theorem \ref{prop_mras}.
\begin{theorem}\label{prop_mras_dual}
Let $\alpha:A\rightarrow B$ be a minimal left almost split morphism in $\mathcal{C}$ with $A$ $\Ext$-injective.
\begin{enumerate}[label=(\alph*)]
\item The triangle
\begin{align*}
    A\xrightarrow{\alpha} B\xrightarrow{\beta} Z\xrightarrow{\zeta}\Sigma A 
\end{align*}
is such that $Z$ is an indecomposable not in $\mathcal{C}$ and $\beta$ is a $\mathcal{C}$-cover of $Z$. 
\item If $\alpha':A'\rightarrow B'$ is a minimal left almost split morphism in $\mathcal{C}$ with $A'$ $\Ext$-injective, then $A'\cong A$ if and only if $Z'\cong Z$, where $A'\xrightarrow{\alpha'}B'\xrightarrow{\beta'}Z'\xrightarrow{\zeta'}\Sigma A'$ is the triangle obtained by extending $\alpha'$.
\end{enumerate}
\end{theorem}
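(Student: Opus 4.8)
The statement is exactly dual to Theorem \ref{prop_mras}, so the plan is to run the proof of that theorem verbatim with all arrows reversed, invoking the (b)-halves of Lemma \ref{lemma_ras_ind} and Lemma \ref{lemma_proj1} in place of the (a)-halves and interchanging the roles of $\Ext$-projectivity and $\Ext$-injectivity, of covers and envelopes, and of left and right minimality.

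For part (a): since $\alpha$ is left almost split in $\mathcal{C}$, Lemma \ref{lemma_ras_ind}(b) gives that $A$ is indecomposable. The key reduction is to rule out $Z\in\mathcal{C}$: if $Z$ were in $\mathcal{C}$, then $\Ext$-injectivity of $A$ would give $\Ext^1(Z,A)=\Hom(Z,\Sigma A)=0$, forcing $\zeta=0$ and hence the triangle to split; but a split triangle would make $\alpha$ a split monomorphism, contradicting that it is left almost split. Thus $Z\notin\mathcal{C}$ and $\zeta\neq 0$, and the non-split triangle $A\xrightarrow{\alpha}B\xrightarrow{\beta}Z\xrightarrow{\zeta\neq 0}\Sigma A$ with $A$ indecomposable $\Ext$-injective and $B\in\mathcal{C}$ is precisely the input to Lemma \ref{lemma_proj1}(b). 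That lemma then yields that $\beta$ is a $\mathcal{C}$-cover of $Z$ and, using that $\alpha$ is left minimal, that $Z$ is indecomposable.

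For part (b), I would treat the two implications separately, each via a morphism of triangles and the $5$-lemma. Assume first $A'\cong A$ through an isomorphism $\varphi:A\to A'$. Then $\alpha'\circ\varphi:A\to B'$ is again minimal left almost split in $\mathcal{C}$, since pre-composition with an isomorphism preserves all three defining conditions; so both $\alpha$ and $\alpha'\circ\varphi$ are minimal left almost split with domain $A$, and Lemma \ref{lemma_ras_ind}(b) supplies an isomorphism $\psi:B\to B'$ with $\psi\circ\alpha=\alpha'\circ\varphi$. This is exactly the commuting left-hand square needed to complete $(\varphi,\psi)$ to a morphism between the two triangles, and the $5$-lemma forces the induced map $Z\to Z'$ to be an isomorphism. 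Conversely, assume $Z'\cong Z$ via $\sigma:Z\to Z'$. Since $\beta,\beta'$ are $\mathcal{C}$-covers of $Z,Z'$ by part (a) and covers are unique up to isomorphism \cite[section\ 1]{MK}, the morphism $\sigma^{-1}\circ\beta':B'\to Z$ is another $\mathcal{C}$-cover of $Z$, whence there is an isomorphism $\psi:B\to B'$ with $\beta'\circ\psi=\sigma\circ\beta$; completing $(\psi,\sigma)$ to a morphism of triangles and applying the $5$-lemma produces an isomorphism $A\cong A'$.

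There is no genuinely hard step here: the only content is careful bookkeeping of the duality. The two points that must be checked rather than merely asserted are that pre-composition (respectively post-composition) with an isomorphism preserves being minimal left almost split (respectively being a $\mathcal{C}$-cover), and that the square produced in each direction is the correct one, namely the left square for the $A'\cong A$ direction and the right square for the $Z'\cong Z$ direction, so that the triangulated axioms and the $5$-lemma apply exactly as in the proof of Theorem \ref{prop_mras}.
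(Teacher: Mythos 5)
Your proof is correct and is exactly what the paper intends: the paper states Theorem \ref{prop_mras_dual} without proof as the dual of Theorem \ref{prop_mras}, and your argument is precisely that dualization, invoking Lemma \ref{lemma_ras_ind}(b) and Lemma \ref{lemma_proj1}(b) in place of the (a)-parts and swapping envelopes/covers and left/right minimality, just as the paper's proof of Theorem \ref{prop_mras} does on the other side. The two bookkeeping points you flag (pre-/post-composition with an isomorphism preserving minimal left almost split morphisms and $\mathcal{C}$-covers) are indeed the only things used implicitly in the original proof as well.
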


Note that, even though the second morphism in the triangles from Theorem \ref{prop_mras} is minimal right almost split in $\mathcal{C}$, these are not Auslander-Reiten triangles in $\mathcal{C}$ since the first object in them is not in $\mathcal{C}$. Because of this ``weakness'' they have, we define them as follows.

\begin{defn}
A triangle in $\mathcal{T}$ of the form
\begin{align*}
X\xrightarrow{\xi} B\xrightarrow{\beta} C\xrightarrow{\gamma}\Sigma X 
\end{align*}
is called a \textit{left-weak Auslander-Reiten triangle in $\mathcal{C}$} if $C$ is Ext-projective in $\mathcal{C}$, $\beta$ is a minimal right almost split morphism in $\mathcal{C}$ and $\xi$ is a $\mathcal{C}$-envelope of $X$. Note that, by Theorem \ref{prop_mras}, the two end terms of the triangle are indecomposable objects determining each other. 

Dually, a triangle in $\mathcal{T}$ of the form
\begin{align*}
    A\xrightarrow{\alpha} B\xrightarrow{\beta} Z\xrightarrow{\zeta}\Sigma A 
\end{align*}
is called a \textit{right-weak Auslander-Reiten triangle in $\mathcal{C}$} if $A$ is Ext-injective in $\mathcal{C}$, $\alpha$ is a minimal left almost split morphism in $\mathcal{C}$ and $\beta$ is a $\mathcal{C}$-cover of $Z$. Note that, by Theorem \ref{prop_mras_dual}, the two end terms of the triangle are indecomposable objects determining each other. 
%
\end{defn}

\begin{remark}\label{rmk_star_exists}
Suppose $\mathcal{C}$ is functorially finite in $\mathcal{T}$. Then, by \cite[propositions 2.10 and 2.11]{IY} for any indecomposable object $C$ in $\mathcal{C}$ there is a minimal right almost split morphism in $\mathcal{C}$ ending at it and a minimal left almost split morphism in $\mathcal{C}$ starting at it. Hence, by Theorems \ref{prop_mras} and \ref{prop_mras_dual}, there is a left-weak Auslander-Reiten triangle in $\mathcal{C}$ ending at $C$ and a right-weak Auslander-Reiten triangle in $\mathcal{C}$ starting at $\mathcal{C}$.
\end{remark}

We end this section by giving equivalent definitions to $\Ext$-projectivity in the case when $\mathcal{C}$ is precovering in $\mathcal{T}$.

\begin{proposition}\label{thm_extproj_iff}
Assume $\mathcal{C}$ is precovering in $\mathcal{T}$. Let $C\in\mathcal{C}$ be indecomposable and $\alpha:A\rightarrow \tau C$ be a $\mathcal{C}$-cover. Then, the following are equivalent:
\begin{enumerate}[label=(\alph*)]
    \item $C$ is $\Ext$-projective in $\mathcal{C}$,
    \item $A=0$,
    \item $\Ext^1(C, A)=0$.
\end{enumerate}
\end{proposition}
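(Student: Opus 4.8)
The plan is to reduce everything to a single application of Serre duality, which converts the $\Ext$-groups in the statement into $\Hom$-groups landing in $\tau C$, where the $\mathcal{C}$-cover hypothesis can be exploited. Concretely, since $\mathcal{T}$ has a Serre functor $S$, for any $A'\in\mathcal{T}$ there is a natural isomorphism $\Ext^1(C,A')=\Hom_{\mathcal{T}}(C,\Sigma A')\cong D\Hom_{\mathcal{T}}(\Sigma A',SC)$, where $D=\Hom_k(-,k)$. Using that $S$ commutes with $\Sigma$ and that $\tau=S\Sigma^{-1}$, I would rewrite $\Hom_{\mathcal{T}}(\Sigma A',SC)=\Hom_{\mathcal{T}}(A',\Sigma^{-1}SC)=\Hom_{\mathcal{T}}(A',\tau C)$, so that
\begin{align*}
\Ext^1(C,A')\cong D\Hom_{\mathcal{T}}(A',\tau C)
\end{align*}
for every $A'$. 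In particular, $C$ is $\Ext$-projective in $\mathcal{C}$ if and only if $\Hom_{\mathcal{T}}(A',\tau C)=0$ for all $A'\in\mathcal{C}$.

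With this identity in hand, I would prove the three statements equivalent by establishing the cycle (a)$\Rightarrow$(c)$\Rightarrow$(b)$\Rightarrow$(a). The implication (a)$\Rightarrow$(c) is immediate: the domain $A$ of a $\mathcal{C}$-cover lies in $\mathcal{C}$, so $\Ext$-projectivity of $C$ gives $\Ext^1(C,A)=0$ directly. For (c)$\Rightarrow$(b), the displayed isomorphism applied to $A'=A$ turns $\Ext^1(C,A)=0$ into $\Hom_{\mathcal{T}}(A,\tau C)=0$; in particular the cover $\alpha:A\to\tau C$ is the zero morphism. Since $\alpha$ is right minimal, applying right minimality to the endomorphism $0_A:A\to A$, which satisfies $\alpha\circ 0_A=0=\alpha$, forces $0_A$ to be an automorphism, whence $A=0$.

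Finally, for (b)$\Rightarrow$(a) I would use that $\alpha:0\to\tau C$ is a $\mathcal{C}$-precover: every morphism $A'\to\tau C$ with $A'\in\mathcal{C}$ factors through $\alpha$ and is therefore zero, so $\Hom_{\mathcal{T}}(A',\tau C)=0$ for all $A'\in\mathcal{C}$. By the displayed isomorphism this gives $\Ext^1(C,A')=0$ for all $A'\in\mathcal{C}$, that is, $C$ is $\Ext$-projective.

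I do not anticipate a genuine obstacle here. The only points requiring care are the bookkeeping in the Serre duality computation, namely verifying naturality of the isomorphism and that $S$ commutes with $\Sigma$ so that $\Sigma^{-1}SC$ is genuinely $\tau C$, together with the small observation that a right minimal morphism which is zero must have zero domain. Everything else is formal.
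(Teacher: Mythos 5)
Your proof is correct, and it takes a genuinely different route from the paper's for the key implication. The duality identity you isolate, $\Ext^1(C,A')\cong D\Hom(A',\tau C)$ for all $A'$, is exactly the computation the paper performs inside its proof of (c)$\Rightarrow$(b), though the paper organizes it via full faithfulness of $\tau$, namely $\Hom(C,\Sigma A)\cong\Hom(\tau C,\tau\Sigma A)=\Hom(\tau C,SA)\cong D\Hom(A,\tau C)$, which incidentally sidesteps the commutation $S\Sigma\cong\Sigma S$ that your version needs (you correctly flag this; it is standard, since Serre functors are triangulated, but the paper's bookkeeping avoids it altogether). Where you genuinely diverge is in returning to (a): the paper proves (c)$\Rightarrow$(a) by contraposition, invoking J{\o}rgensen's existence theorem for Auslander--Reiten triangles in subcategories to produce, from the cover $\alpha$, an Auslander--Reiten triangle $A\rightarrow B\rightarrow C\xrightarrow{\neq 0}\Sigma A$ in $\mathcal{C}$ whenever $C$ fails to be $\Ext$-projective, whence $\Ext^1(C,A)\neq 0$. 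You instead close the cycle (a)$\Rightarrow$(c)$\Rightarrow$(b)$\Rightarrow$(a) with the elementary observation that a precover with zero domain forces $\Hom(A',\tau C)=0$ for every $A'\in\mathcal{C}$, which the duality identity converts into $\Ext$-projectivity of $C$. Your argument is more self-contained: it uses nothing beyond Serre duality, the defining property of a precover, and the fact that a right minimal zero morphism has zero domain, whereas the paper leans on a substantial external theorem. What the paper's route buys is a direct link to the Auslander--Reiten triangle machinery that is the subject of the paper; what yours buys is independence from that machinery and a single reusable natural isomorphism doing all the work.
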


\begin{proof}
Note that (a) implies (c) by definition of $\Ext$-projectivity, since $A\in\mathcal{C}$. The fact that (b) implies (c) is also clear.

To prove that (c) implies (a), assume that $C$ is not $\Ext$-projective. By \cite[theorem\ 3.1]{JP}, since $\alpha :A\rightarrow \tau C$ is a $\mathcal{C}$-cover, there is an Auslander-Reiten triangle in $\mathcal{C}$ of the form
\begin{align*}
    A\rightarrow B\rightarrow C\xrightarrow{\neq 0}\Sigma A.
\end{align*}
Hence $\Ext^1(C,A)=\Hom(C,\Sigma A)\neq 0$.

To prove that (c) implies (b), note that $C=\tau^{-1}(\tau C)$. Then, letting $D(-)=\Hom_k(-,k)$, we have
\begin{align*}
    0&=\Ext^1(C, A)=\Hom(C,\Sigma A)=\Hom(\tau^{-1}(\tau C),\Sigma A)\cong \Hom (\tau C, \tau\Sigma A)\\&\cong \Hom (\tau C, S A)\cong D\Hom (A,\tau C).
\end{align*}
Then, $\Hom(A,\tau C)=0$ and in particular $\alpha=0$. Since $\alpha$ is right minimal, it follows that $A=0$.
\end{proof}

The dual of the above follows in a similar way. Here we state it without proof.

\begin{proposition}
Assume $\mathcal{C}$ is preenveloping in $\mathcal{T}$. Let $A\in \mathcal{C}$ be indecomposable and $\zeta:\tau^{-1}A\rightarrow C$ be a $\mathcal{C}$-envelope. Then, the following are equivalent:
\begin{enumerate}[label=(\alph*)]
    \item $A$ is $\Ext$-injective in $\mathcal{C}$,
    \item $C=0$,
    \item $\Ext^1(C, A)=0$.
\end{enumerate}
\end{proposition}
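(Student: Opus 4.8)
The plan is to mirror the proof of Proposition \ref{thm_extproj_iff}, dualizing each step. The implication (a) $\Rightarrow$ (c) is immediate from the definition: $A$ being $\Ext$-injective means $\Ext^1(A',A)=0$ for every $A'\in\mathcal{C}$, and since $C\in\mathcal{C}$ this specializes to $\Ext^1(C,A)=0$. The implication (b) $\Rightarrow$ (c) is trivial, as $C=0$ forces $\Ext^1(C,A)=0$.

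For (c) $\Rightarrow$ (a) I would argue by contraposition. Suppose $A$ is not $\Ext$-injective, so that $\Hom(\mathcal{C},\Sigma A)\neq 0$. Applying the dual of \cite[theorem\ 3.1]{JP} to the $\mathcal{C}$-envelope $\zeta:\tau^{-1}A\rightarrow C$ then yields an Auslander-Reiten triangle in $\mathcal{C}$ of the form $A\rightarrow B\rightarrow C\xrightarrow{\neq 0}\Sigma A$, whose non-zero connecting morphism lives in $\Hom(C,\Sigma A)=\Ext^1(C,A)$, so $\Ext^1(C,A)\neq 0$. This contradicts (c), hence (c) forces $A$ to be $\Ext$-injective.

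For (c) $\Rightarrow$ (b) I would run the Serre-duality computation dual to the one in Proposition \ref{thm_extproj_iff}, using $A=\tau(\tau^{-1}A)$, the identity $S\tau^{-1}=\Sigma$ coming from $\tau^{-1}=S^{-1}\Sigma$, and the Serre duality isomorphism $\Hom(X,SY)\cong D\Hom(Y,X)$. Concretely, starting from (c),
\begin{align*}
0=\Ext^1(C,A)=\Hom(C,\Sigma A)=\Hom(C,S\tau^{-1}A)\cong D\Hom(\tau^{-1}A,C),
\end{align*}
so $\Hom(\tau^{-1}A,C)=0$; in particular the $\mathcal{C}$-envelope $\zeta$ is zero, and since $\zeta$ is left minimal this forces the zero endomorphism of $C$ to be an automorphism, whence $C=0$. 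The main obstacle I anticipate is bookkeeping rather than a genuine difficulty: one must apply $S$, $\tau$ and $\Sigma$ on the correct sides and track the Serre duality convention so that the chain lands on $\Hom(\tau^{-1}A,C)$ rather than some other Hom-space, and one must invoke the dual of \cite[theorem\ 3.1]{JP} in exactly the form used above. Both are routine once set up carefully.
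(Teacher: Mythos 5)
Your proof is correct and is precisely the dualization of the paper's proof of Proposition \ref{thm_extproj_iff}, which is exactly how the paper disposes of this statement (it is stated there without proof as "the dual of the above"). All three steps check out: the appeal to the dual of \cite[theorem 3.1]{JP} (stated as theorem 3.2 there, and used the same way in the paper's proof of Theorem \ref{thm_mutation}(b)) is the right tool for (c) $\Rightarrow$ (a), and the Serre duality chain $\Hom(C,\Sigma A)=\Hom(C,S\tau^{-1}A)\cong D\Hom(\tau^{-1}A,C)$ together with left minimality of $\zeta$ correctly yields (c) $\Rightarrow$ (b).
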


\section{Extension closed subcategories from weak Auslander-Reiten triangles in $\mathcal{C}$}\label{section4}
 In this section, we show how, in some cases, it is possible to construct a new extension closed subcategory $\mathcal{C'}\subseteq \mathcal{T}$ modifying $\mathcal{C}$ using the objects that appear in a left-weak (or a right-weak) Auslander-Reiten triangle in $\mathcal{C}$. The idea of how this is done is similar to the mutation from \cite{IY}. 

\begin{defn}
For an additive subcategory $\mathcal{X}$ of $\mathcal{T}$, we denote by $\text{Ind }\mathcal{X}$ a maximal set of pairwise non-isomorphic indecomposable objects in $\mathcal{X}$.
\end{defn}

One of the conditions in the following lemma is that the endomorphism ring of the indecomposable $C$ in $\mathcal{C}$ is a division ring. Note that, for example, this is true if $\End(C)\cong k$.

\begin{lemma}\label{lemma_C_preenv}
Assume $\mathcal{C}$ is functorially finite in $\mathcal{T}$. Let $C\in\mathcal{C}$ be an indecomposable object such that $\End(C)$ is a division ring. Let $X$ in $\mathcal{T}$ be an indecomposable object. Let $\widetilde{\mathcal{C}}$ be the additive subcategory of $\mathcal{T}$ with $\text{Ind }\widetilde{\mathcal{C}}=\text{Ind }(\mathcal{C})\setminus C$ and set $\mathcal{C}':=\text{add }(\widetilde{\mathcal{C}}\cup X)$.
Then $\mathcal{C}'$ is functorially finite in $\mathcal{T}$.
\end{lemma}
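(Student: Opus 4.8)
The plan is to establish the two halves of functorial finiteness for $\mathcal{C}'$ by first reducing to $\widetilde{\mathcal{C}}$ and then treating the removal of $C$ by hand. The reduction rests on the elementary fact that adjoining a single object to a functorially finite subcategory preserves functorial finiteness: if $\mathcal{D}\subseteq\mathcal{T}$ is precovering and $M\in\mathcal{T}$, then for any $T$ a $\mathcal{D}$-precover $d:D\to T$ together with a $k$-basis $f_1,\dots,f_m$ of the finite-dimensional space $\Hom_{\mathcal{T}}(M,T)$ assembles into an $\text{add}(\mathcal{D}\cup M)$-precover $(d,f_1,\dots,f_m):D\oplus M^m\to T$, and the dual argument produces preenvelopes. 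Applying this with $\mathcal{D}=\widetilde{\mathcal{C}}$ and $M=X$, it suffices to prove that $\widetilde{\mathcal{C}}$ is functorially finite.

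First I would show that $\widetilde{\mathcal{C}}$ is precovering. Since $\mathcal{C}$ is functorially finite, Remark \ref{rmk_star_exists} (via \cite[propositions 2.10 and 2.11]{IY}) supplies a minimal right almost split morphism $\beta:B\to C$ in $\mathcal{C}$. The crucial point—and the step I expect to be the main obstacle—is to check that $C$ is not a direct summand of $B$, so that $B\in\widetilde{\mathcal{C}}$. This is exactly where the hypothesis $\End(C)\cong k$ enters: as $C$ is indecomposable and $\beta$ is not a split epimorphism, $\beta$ lies in $\rad_{\mathcal{T}}$, so any component $C\to C$ of $\beta$ lies in $\rad_{\mathcal{T}}(C,C)=\rad(\End C)=0$; a summand $C$ of $B$ would then allow us to alter $1_B$ to a non-invertible endomorphism $\varphi$ of $B$ (the identity off the $C$-summand, zero on it) satisfying $\beta\circ\varphi=\beta$, contradicting right minimality. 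Granting $B\in\widetilde{\mathcal{C}}$, I would then observe that $\beta$ is in fact a $\widetilde{\mathcal{C}}$-precover of $C$: every indecomposable $M\in\widetilde{\mathcal{C}}$ satisfies $M\not\cong C$, so a morphism $M\to C$ is not a split epimorphism and hence factors through $\beta$, and the general case follows by decomposing $M$ into indecomposables by Krull-Schmidt.

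With the $\widetilde{\mathcal{C}}$-precover $\beta:B\to C$ in hand, I would construct a $\widetilde{\mathcal{C}}$-precover of an arbitrary $T\in\mathcal{T}$. Choose a $\mathcal{C}$-precover $a:A\to T$ and write $A\cong\widetilde{A}\oplus C^{n}$ with $\widetilde{A}\in\widetilde{\mathcal{C}}$, letting $a_0:\widetilde{A}\to T$ and $a_1,\dots,a_n:C\to T$ be the components of $a$. The claim is that
\[
\widetilde{a}:=(a_0,\,a_1\beta,\,\dots,\,a_n\beta):\widetilde{A}\oplus B^{n}\longrightarrow T
\]
is a $\widetilde{\mathcal{C}}$-precover. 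Indeed, given $f:M\to T$ with $M\in\widetilde{\mathcal{C}}$, factoring $f$ through $a$ and then factoring each resulting component $M\to C$ through $\beta$ exhibits $f$ as a factorization through $\widetilde{a}$, while $\widetilde{A}\oplus B^{n}\in\widetilde{\mathcal{C}}$. Hence $\widetilde{\mathcal{C}}$ is precovering.

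The preenveloping half is entirely dual: one takes the minimal left almost split morphism $C\to B'$ in $\mathcal{C}$ provided by \cite{IY}, shows $C$ is not a summand of $B'$ by the same $\End(C)\cong k$ argument, deduces that $C\to B'$ is a $\widetilde{\mathcal{C}}$-preenvelope of $C$, and converts a $\mathcal{C}$-preenvelope of $T$ into a $\widetilde{\mathcal{C}}$-preenvelope. Thus $\widetilde{\mathcal{C}}$ is functorially finite, and the reduction in the first paragraph then gives that $\mathcal{C}'=\text{add}(\widetilde{\mathcal{C}}\cup X)$ is functorially finite, as required.
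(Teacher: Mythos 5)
Your proposal is correct and follows essentially the same route as the paper's proof: both handle the removal of $C$ by taking the minimal almost split morphism at $C$ supplied by \cite[propositions 2.10 and 2.11]{IY}, using $\End(C)\cong k$ (plus minimality) to see that its other end lies in $\widetilde{\mathcal{C}}$, then converting a $\mathcal{C}$-precover (respectively $\mathcal{C}$-preenvelope) of an arbitrary object into a $\widetilde{\mathcal{C}}$-one by rerouting the $C$-components through that morphism, and finally adjoining $X$ via finite-dimensionality of $\Hom$ spaces. The only differences are cosmetic: you write out the precovering half and dualize where the paper writes out the preenveloping half, and your argument that $C$ is not a summand of $B$ (via the radical and right minimality) is a more explicit version of the step the paper treats tersely.
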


\begin{proof}
We show that $\mathcal{C}'$ is preenveloping in $\mathcal{T}$, the proof for $\mathcal{C}'\subseteq \mathcal{T}$ precovering is then dual.
By \cite[propositions 2.10, 2.11]{IY}, there exists a left almost split morphism in $\mathcal{C}$ of the form $\eta:C\rightarrow D$. Since $\End(C)$ is a division ring, then every endomorphism of $C$ is either an isomorphism or it is the zero morphism. Hence $C$ is not a direct summand of $D$ and $D\in\widetilde{C}:=\text{add }(\mathcal{C}\setminus C)$. For any $Z$ in $\mathcal{T}$, consider a $\mathcal{C}$-preenvelope $\zeta: Z\rightarrow C^n\oplus \widetilde{C}$, for some non-negative integer $n$ and $\widetilde{C}\in\widetilde{\mathcal{C}}$. Consider
\begin{align*}
    Z\xrightarrow{\zeta} C^n\oplus \widetilde{C}\xrightarrow{\begin{pmatrix}G&0\\0& 1_{\widetilde{C}}\end{pmatrix}} D^n\oplus \widetilde{C},
\end{align*}
where $G$ is the $n\times n$ matrix having $\eta$ in the diagonal entries and zero elsewhere. Any morphism $\varphi:Z\rightarrow \widetilde{C}'$, where $\widetilde{C}'\in\widetilde{\mathcal{C}}$, factors through $\zeta$. So there exists a morphism $\gamma=(\gamma_1,\dots,\gamma_n,\widetilde{\gamma}):C^n\oplus\widetilde{C}\rightarrow\widetilde{C}'$ such that $\gamma\circ\zeta=\varphi$. Note that, since $C$ is not a direct summand of $\widetilde{C}'$, then $\gamma_i$ is not a split monomorphism for $i=1,\dots,n$. Hence, since $\eta$ is left almost split in $\mathcal{C}$, there exists $\delta_i: D\rightarrow \widetilde{C}'$ such that $\delta_i\circ \eta=\gamma_i$. Let $\delta=(\delta_1,\dots, \delta_n, \widetilde{\gamma}):D^n\oplus\widetilde{C}\rightarrow \widetilde{C}'$. Then
\begin{align*}
    \varphi=\gamma\circ\zeta=\delta\circ\zeta', \text{ for } \zeta':=\begin{pmatrix}G&0\\0& 1_{\widetilde{C}}\end{pmatrix}\zeta.
\end{align*}
Hence $\zeta':Z\rightarrow D^n\oplus \widetilde{C}$ is a $\widetilde{\mathcal{C}}$-preenvelope.
Adding some copies of $X$ to $D^n\oplus\widetilde{C}$ if necessary, we then obtain a $\mathcal{C}'$-preenvelope of $Z$.
\end{proof}

\begin{defn}
Let $\mathcal{X}\subset\mathcal{T}$ be an additive subcategory. The additive subcategory of $\mathcal{X}$ consisting of all the $\Ext$-injective (respectively $\Ext$-projective) objects in $\mathcal{X}$ is denoted $I(\mathcal{X})$ (respectively $P(\mathcal{X})$).
\end{defn}

\begin{theorem}\label{thm_mutation}
Assume $\mathcal{C}$ is functorially finite in $\mathcal{T}$ and $C\in P(\mathcal{C})$ is an indecomposable. Then there is a left-weak Auslander-Reiten triangle in $\mathcal{C}$ of the form
\begin{align}
X\xrightarrow{\xi} B\xrightarrow{\beta} C\xrightarrow{\gamma}\Sigma X. \tag{$\star$}
\end{align}
Let $\widetilde{\mathcal{C}}\subseteq \mathcal{T}$ be the additive subcategory with $\text{Ind }\widetilde{\mathcal{C}}=\text{Ind }(\mathcal{C})\setminus C$ and define $\mathcal{C}':=\text{add }(\widetilde{\mathcal{C}}\cup X)$.
\begin{enumerate}[label=(\alph*)]
    \item If $X\in P(\mathcal{C}')\cap I(\mathcal{C}')$, then $\mathcal{C}'$ is closed under extensions.
    \item If $\End(X)$ and $\End(C)$ are division rings and $\mathcal{C}'$ is closed under extensions, then $X\in I(\mathcal{C}')$.
    \item If $\End(C)$ is a division ring, $\mathcal{C}'$ is closed under extensions and $X\in P(\mathcal{C}')$, then $X\in I(\mathcal{C}')$.
\end{enumerate}
\end{theorem}

\begin{proof}
First note that $(\star)$ exists by Remark \ref{rmk_star_exists}.

(a) Suppose $X$ is $\Ext$-injective and $\Ext$-projective in $\mathcal{C}'$.
Consider first a triangle with end terms $\widetilde{C''},\, \widetilde{C'}$ in $\widetilde{\mathcal{C}}$:
\begin{align*}
   \widetilde{C''}\rightarrow A \rightarrow \widetilde{C'}\rightarrow \Sigma \widetilde{C''}.
\end{align*}
Since $\mathcal{C}$ is closed under extensions and $\widetilde{\mathcal{C}}\subset \mathcal{C}$, then $A\in\mathcal{C}$. We prove that $C$ is not a direct summand of $A$, so that $A\in\widetilde{\mathcal{C}}\subset\mathcal{C}'$. Suppose for a contradiction that $A\cong \overline{A}\oplus C$ for some $\overline{A}\in\mathcal{C}$.
Note that any morphism $\widetilde{C}\rightarrow C$ with $\widetilde{C}\in\widetilde{\mathcal{C}}\subset\mathcal{C}$ is not a split epimorphism, so it factors  through $\beta$ since $\beta$ is right almost split in $\mathcal{C}$. Hence, by the axioms of triangulated categories, we obtain a morphism of triangles of the form:

\begin{align*}
    \xymatrix{
    \widetilde{C''}\ar[r]\ar[d]& \overline{A}\oplus C\ar[r]^-{\alpha}\ar[d]^{(0,1)}& \widetilde{C'}\ar[r]\ar[d]^-{\delta} &\Sigma \widetilde{C''}\ar[d]\\
    B\ar[r]_\beta& C\ar[r]_-{\gamma}&\Sigma X\ar[r]_{-\Sigma \xi}&\Sigma B.
    }
\end{align*}
Since $X$ is $\Ext$-injective in $\mathcal{C}'$ and $\widetilde{C'}\in\mathcal{C}'$, then $\delta=0$. Hence
\begin{align*}
    0=\delta\alpha=\gamma (0,1)=(0,\gamma),
\end{align*}
contradicting the fact that $\gamma$ is non-zero. So $A\in\widetilde{\mathcal{C}}\subset\mathcal{C}'$.

Consider now a triangle with end terms in $\mathcal{C}'$, say $\epsilon: C''\rightarrow A\rightarrow C'\rightarrow \Sigma C''$.  Then
\begin{align*}
\epsilon: X^t\oplus \widetilde{C''}\rightarrow A\rightarrow X^s\oplus \widetilde{C'}\xrightarrow{\gamma'} \Sigma X^t\oplus\Sigma \widetilde{C''},
\end{align*}
for some non-negative integers $s,\,t$ and some $\widetilde{C''},\,\widetilde{C'}\in\widetilde{\mathcal{C}}$. Note that since $X$ is $\Ext$-injective and $\Ext$-projective in $\mathcal{C}'$, we have
\begin{align*}
    \gamma'=\begin{psmallmatrix}0&0\\0&\overline{\gamma'}\end{psmallmatrix}: X^s\oplus\widetilde{C'}\rightarrow  \Sigma X^t\oplus\Sigma \widetilde{C''}.
\end{align*}
Hence $\epsilon$ is the direct sum of triangles of the form
\begin{align*}
    X^t\xrightarrow{1} X^t\rightarrow 0\rightarrow \Sigma X^t, \,\,\, 0\rightarrow X^s\xrightarrow{1}X^s\rightarrow 0\,\, \text{ and }\,\, \widetilde{C''}\rightarrow \overline{A}\rightarrow \widetilde{C'}\xrightarrow{\overline{\gamma'}}\Sigma \widetilde{C''}.
\end{align*}
Note that, as $\widetilde{C''},\,\widetilde{C'}\in\widetilde{\mathcal{C}}$, then $\overline{A}\in\widetilde{\mathcal{C}}$ and so $A\in\mathcal{C}'$. Hence $\mathcal{C}'$ is closed under extensions.

(b) Suppose now that $\End(X)$ and $\End(C)$ are division rings and $\mathcal{C}'$ is closed under extensions. Suppose for a contradiction that $X$ is not $\Ext$-injective in $\mathcal{C}'$. By Definition \ref{defn_AR_exist}, there is an Auslander-Reiten triangle in $\mathcal{T}$ of the form:
\begin{align*}
    X\rightarrow Y\rightarrow \tau^{-1}X\rightarrow \Sigma X.
\end{align*}
Also, since $\mathcal{C}\subseteq \mathcal{T}$ is functorially finite, then $\mathcal{C}'\subseteq\mathcal{T}$ is preenveloping by Lemma \ref{lemma_C_preenv}. Let $\tau^{-1}X\rightarrow D$ be a $\mathcal{C}'$-envelope. Then, by \cite[theorem 3.2]{JP}, there is an Auslander-Reiten triangle in $\mathcal{C}'$ of the form:
\begin{align*}
    X\xrightarrow{\xi'} E\xrightarrow{\epsilon} D\xrightarrow{\delta} \Sigma X.
\end{align*}
Since $\End(X)$ is a division ring, then every endomorphism of $X$ is either an isomorphism or it is the zero morphism. Then, since $\xi'$ is not a split monomorphism, we have that $X$ is not a direct summand of $E$ and so $E\in\widetilde{\mathcal{C}}\subset\mathcal{C}$. As $\xi:X\rightarrow B$ is a $\mathcal{C}$-envelope, then there exists a morphism $\varphi: B\rightarrow E$ such that $\varphi\circ \xi=\xi'$. Then, by the axioms of triangulated categories, we obtain a morphism of triangles of the form:
\begin{align*}
    \xymatrix{
    X\ar[r]^\xi\ar@{=}[d]& B\ar[r]^\beta\ar[d]^\varphi &C\ar[r]^\gamma\ar[d]^\phi &\Sigma X\ar@{=}[d]\\
    X\ar[r]_{\xi'} &E\ar[r]_{\epsilon}& D\ar[r]_{\delta}& \Sigma X.
    }
\end{align*}
Since $\End(X)$ is a division ring and $\xi$ is not a split monomorphism, we have that $B\in\widetilde{\mathcal{C}}\subset\mathcal{C}'$. Then, since $\xi':X\rightarrow E$ is left almost split in $\mathcal{C}'$, there is a morphism $\eta:E\rightarrow B$ such that $\eta\circ \xi'=\xi$. By the axioms of triangulated categories, we obtain a morphism of triangles of the form:
\begin{align*}
    \xymatrix{
    X\ar[r]^{\xi'}\ar@{=}[d]& E\ar[r]^{\epsilon}\ar[d]^\eta &D\ar[r]^{\delta}\ar[d]^\nu &\Sigma X\ar@{=}[d]\\
    X\ar[r]_\xi &B\ar[r]_\beta& C\ar[r]_\gamma& \Sigma X.
    }
\end{align*}
Consider the composition of these two triangle morphisms. As $\xi'$ is left minimal and $\xi'=\varphi\eta \xi'$, it follows that  $\varphi\eta: E\rightarrow E$ is an isomorphism. Then, by the $5$-Lemma \cite[exercise 10.2.2]{W}, we have that $\phi\nu:D\rightarrow D$ is an isomorphism. In particular, $\nu: D\rightarrow C$ is a split monomorphism and $D$ is a direct summand of $C$. As $C$ is indecomposable, this means that $D\cong C$, contradicting the fact that $D$ is in $\mathcal{C}'$ while $C$ is not. Hence $X$ is $\Ext$-injective.

(c) Suppose now that $\End(C)$ is a division ring, $\mathcal{C}'$ is closed under extensions and $X\in P(\mathcal{C}')$.
Since $\End(C)$ is a division ring, then every endomorphism of $C$ is either an isomorphism or it is the zero morphism. Moreover, as $\beta$ is not a split epimorphism, we have that $C$ is not a direct summand of $B$. Hence  $B\in\widetilde{\mathcal{C}}$ and it is easy to see that $\beta$ is a $\widetilde{C}$-cover of $C$.

Next, we prove that $\widetilde{\mathcal{C}}$ is closed under extensions. Consider a triangle with end terms $\widetilde{C''},\, \widetilde{C'}$ in $\widetilde{\mathcal{C}}$:
\begin{align*}
   \widetilde{C''}\rightarrow A \rightarrow \widetilde{C'}\rightarrow \Sigma \widetilde{C''}.
\end{align*}
Since $\mathcal{C}$ and $\mathcal{C}'$ are closed under extensions and $\widetilde{\mathcal{C}}$ is contained in both, then $A\in\mathcal{C}\cap \mathcal{C}'=\widetilde{\mathcal{C}}$ and $\widetilde{\mathcal{C}}$ is closed under extensions.
By Triangulated Wakamatsu's Lemma, see \cite[Lemma 2.1]{JP}, we then have that $\Hom (\widetilde{C}, \Sigma X)=0$ for all $\widetilde{C}\in\widetilde{\mathcal{C}}$.
Moreover, since $X\in P(\mathcal{C}')$ by assumption, we have that $\Hom (X, \Sigma X)=0$. Hence $\Hom (C', \Sigma X)=0$ for all $C'\in\mathcal{C}'$ and $X\in I(\mathcal{C}')$.
\end{proof}

\begin{remark}\label{remark_2CY_Ext}
If $\mathcal{T}$ is $2$-Calabi-Yau, letting $D(-)=\Hom_{k}(-,k)$, for any $X,\,Y\in\mathcal{T}$, we have that  $\Ext^1(X,Y)=\Hom(X,\Sigma Y)\cong D\Hom(Y,\Sigma X)=D\Ext^1(Y,X)$. Hence $\Ext$-projective and $\Ext$-injective objects coincide in additive subcategories of $\mathcal{T}$.
\end{remark}

\begin{corollary}\label{coro_mutation}
In the setup of Theorem \ref{thm_mutation}, suppose that $\mathcal{T}$ is $2$-Calabi-Yau and $\End(X)$ and $\End(C)$ are division rings. Then $\mathcal{C}'$ is closed under extensions if and only if $X\in I(\mathcal{C}')$.
\end{corollary}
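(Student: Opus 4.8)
The plan is to prove Corollary \ref{coro_mutation} by combining the two implications of Theorem \ref{thm_mutation} with the self-duality of $\Ext$ coming from the $2$-Calabi-Yau property recorded in Remark \ref{remark_2CY_Ext}. The key observation is that under the $2$-Calabi-Yau assumption the notions of $\Ext$-projective and $\Ext$-injective in any additive subcategory \emph{coincide}, so the hypothesis $X\in P(\mathcal{C}')\cap I(\mathcal{C}')$ appearing in part (a) of Theorem \ref{thm_mutation} collapses to the single condition $X\in I(\mathcal{C}')$, and likewise $X\in I(\mathcal{C}')$ is the same as $X\in P(\mathcal{C}')$.

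For the backward direction, I would assume $X\in I(\mathcal{C}')$. By Remark \ref{remark_2CY_Ext}, since $\mathcal{T}$ is $2$-Calabi-Yau, $\Ext$-injectives and $\Ext$-projectives in $\mathcal{C}'$ are the same objects, so $X\in I(\mathcal{C}')$ gives $X\in P(\mathcal{C}')$ as well, hence $X\in P(\mathcal{C}')\cap I(\mathcal{C}')$. This is exactly the hypothesis of Theorem \ref{thm_mutation}(a), whose conclusion is that $\mathcal{C}'$ is closed under extensions. So this direction is immediate once the coincidence of the two notions is invoked.

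For the forward direction, I would assume $\mathcal{C}'$ is closed under extensions and use part (b) of Theorem \ref{thm_mutation}. The hypotheses of (b) are precisely $\End(X)\cong\End(C)\cong k$ together with $\mathcal{C}'$ closed under extensions; both are available here (the endomorphism condition is part of the Corollary's standing assumptions, and closure under extensions is what we are assuming in this direction). Theorem \ref{thm_mutation}(b) then yields $X\in I(\mathcal{C}')$ directly, which is the desired conclusion.

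There is no genuine obstacle: the corollary is essentially a packaging of the two halves of Theorem \ref{thm_mutation} under the simplification that the $2$-Calabi-Yau condition makes $\Ext$-injectivity and $\Ext$-projectivity equivalent. The only point requiring a moment's care is making sure the equivalence of $P(\mathcal{C}')$ and $I(\mathcal{C}')$ from Remark \ref{remark_2CY_Ext} is applied to the right subcategory---namely $\mathcal{C}'$ rather than $\mathcal{C}$---but since Remark \ref{remark_2CY_Ext} is stated for arbitrary additive subcategories of $\mathcal{T}$, and $\mathcal{C}'$ is such a subcategory by construction, this is automatic. I would therefore write the proof in two short lines, one per implication, citing Remark \ref{remark_2CY_Ext} and the relevant part of Theorem \ref{thm_mutation} in each.
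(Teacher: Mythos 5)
Your proposal is correct and is essentially identical to the paper's own proof: both directions rest on Remark \ref{remark_2CY_Ext} to identify $P(\mathcal{C}')$ with $I(\mathcal{C}')$, after which part (a) of Theorem \ref{thm_mutation} gives the backward implication and part (b) gives the forward one. The paper merely states this more tersely ("the result then follows directly from Theorem \ref{thm_mutation}"), while you spell out the two implications explicitly.
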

\begin{proof}
Since $\mathcal{T}$ is $2$-Calabi-Yau, we have that $\Ext$-injective and $\Ext$-projective objects in $\mathcal{C}'$ coincide by Remark \ref{remark_2CY_Ext}. The result then follows directly from Theorem \ref{thm_mutation}.
\end{proof}

We state, without proof, the duals of Theorem \ref{thm_mutation} and Corollary \ref{coro_mutation}.

\begin{theorem}\label{thm_mutation_dual}
Assume $\mathcal{C}$ is functorially finite in $\mathcal{T}$ and $A\in I(\mathcal{C})$ is indecomposable. Then there is a right-weak Auslander-Reiten triangle in $\mathcal{C}$ of the form
\begin{align*}
A\xrightarrow{\alpha} B\xrightarrow{\beta} Z\xrightarrow{\zeta}\Sigma A.
\end{align*}
Let $\underline{\mathcal{C}}$ be the additive category with $\text{Ind }\underline{\mathcal{C}}=\text{Ind }(\mathcal{C})\setminus A$ and $\mathcal{C}'':=\text{add }(\underline{\mathcal{C}}\cup Z)$.
\begin{enumerate}[label=(\alph*)]
\item If $Z\in P(\mathcal{C}'')\cap I(\mathcal{C}'')$, then $\mathcal{C}''$ is closed under extensions.
\item If $\End(Z)$ and $\End(A)$ are division rings and $\mathcal{C}''$ is closed under extensions, then $Z\in P(\mathcal{C}'')$.
\item If $\End(A)$ is a division ring, $\mathcal{C}''$ is closed under extensions and $Z\in I(\mathcal{C}'')$, then $Z\in P(\mathcal{C}'')$.
\end{enumerate}
\end{theorem}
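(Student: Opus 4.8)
The statement is the exact dual of Theorem~\ref{thm_mutation}, so the plan is to mirror that proof under the duality $\mathcal{T}\rightsquigarrow\mathcal{T}^{\mathrm{op}}$, which interchanges $\Ext$-injective and $\Ext$-projective objects, turns minimal left almost split morphisms into minimal right almost split ones, replaces $\mathcal{C}$-envelopes by $\mathcal{C}$-covers, and sends $\Sigma,\tau$ to $\Sigma^{-1},\tau^{-1}$. The existence of the right-weak Auslander-Reiten triangle $A\xrightarrow{\alpha}B\xrightarrow{\beta}Z\xrightarrow{\zeta}\Sigma A$ is immediate: since $\mathcal{C}$ is functorially finite and $A$ is an indecomposable $\Ext$-injective, Remark~\ref{rmk_star_exists} and Theorem~\ref{prop_mras_dual} furnish a minimal left almost split $\alpha$, with $Z$ indecomposable, $Z\notin\mathcal{C}$, $\beta$ a $\mathcal{C}$-cover of $Z$, and $\zeta\neq0$.

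For part~(a) I would reproduce the two-step argument of Theorem~\ref{thm_mutation}(a). In the first step I take a triangle $\underline{C''}\to M\to\underline{C'}\to\Sigma\underline{C''}$ with end terms in $\underline{\mathcal{C}}$; extension closure of $\mathcal{C}$ gives $M\in\mathcal{C}$, and I must rule out $A$ being a summand of $M$. If $M\cong\overline{M}\oplus A$, the map $A\hookrightarrow M\to\underline{C'}$ is not a split monomorphism (otherwise $A$ would be a summand of $\underline{C'}\in\underline{\mathcal{C}}$), so it factors through $\alpha$; completing to a morphism of triangles from the right-weak triangle into the extension triangle, the component $\Sigma^{-1}Z\to\underline{C''}$ lies in $\Hom(\Sigma^{-1}Z,\underline{C''})\cong\Ext^1(Z,\underline{C''})=0$ by $\Ext$-projectivity of $Z$, and chasing the resulting square forces $\zeta=0$, a contradiction. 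Hence $M\in\underline{\mathcal{C}}\subseteq\mathcal{C}''$. In the second step, for an arbitrary triangle with end terms $Z^{t}\oplus\underline{C''}$ and $Z^{s}\oplus\underline{C'}$ in $\mathcal{C}''$, the hypothesis $Z\in P(\mathcal{C}'')\cap I(\mathcal{C}'')$ annihilates all off-diagonal components of the connecting morphism --- the $Z\to\Sigma Z$ and $Z\to\Sigma\underline{C''}$ components by $\Ext$-projectivity, the $\underline{C'}\to\Sigma Z$ component by $\Ext$-injectivity --- so the triangle decomposes as trivial triangles on copies of $Z$ together with one triangle with end terms in $\underline{\mathcal{C}}$, which is handled by the first step. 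Thus $\mathcal{C}''$ is extension closed.

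For part~(b) I would argue by contradiction, dualizing Theorem~\ref{thm_mutation}(b). Assuming $Z\notin P(\mathcal{C}'')$, Definition~\ref{defn_AR_exist} provides an Auslander-Reiten triangle $\tau Z\to W\to Z\to\Sigma\tau Z$ in $\mathcal{T}$; the dual of Lemma~\ref{lemma_C_preenv} (using $\End(A)\cong k$) shows $\mathcal{C}''$ is precovering, so $\tau Z$ has a $\mathcal{C}''$-cover and \cite[theorem~3.1]{JP} yields an Auslander-Reiten triangle $D\xrightarrow{\alpha'}E\xrightarrow{\beta'}Z\xrightarrow{\zeta'}\Sigma D$ in $\mathcal{C}''$. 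Because $\End(Z)\cong k$ and $\beta'$ is minimal right almost split, $Z$ is not a summand of $E$, so $E\in\underline{\mathcal{C}}\subseteq\mathcal{C}$, and since $\beta$ is a $\mathcal{C}$-cover of $Z$ the morphism $\beta'$ factors through $\beta$, giving a morphism of triangles from the $\mathcal{C}''$-triangle to the right-weak triangle. Symmetrically, $\End(Z)\cong k$ forces $Z\notin\text{add}\,B$, so $B\in\underline{\mathcal{C}}\subseteq\mathcal{C}''$ and $\beta$ factors through the right almost split $\beta'$, producing a morphism of triangles in the reverse direction. Composing these two, right minimality of $\beta'$ makes the induced endomorphism $\eta\varphi$ of $E$ an isomorphism, and the $5$-Lemma \cite[exercise~10.2.2]{W} then makes the induced endomorphism $\nu\phi$ of $D$ an isomorphism; consequently $\phi:D\to A$ is a split monomorphism, so $D$ is a summand of the indecomposable $A$, forcing $D\cong A$. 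This is absurd because $D\in\mathcal{C}''$ while $A\notin\mathcal{C}''$, so $Z\in P(\mathcal{C}'')$.

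The conceptual content is a clean dualization, and the main obstacle is the bookkeeping in part~(b): one must keep the orientations straight --- here both the $\mathcal{C}$-cover $\beta$ and the right almost split $\beta'$ point towards $Z$, and the translation is $\tau$ rather than $\tau^{-1}$ --- and verify that the two morphisms of triangles compose to an endomorphism of the $\mathcal{C}''$-triangle whose $E$-component $\eta\varphi$ satisfies $\beta'\eta\varphi=\beta'$, so that right minimality of $\beta'$ applies before the $5$-Lemma propagates invertibility to the $D$-component. The final contradiction is reached slightly differently from the primal case --- where indecomposability of $C$ forced $D\cong C$ --- but the mechanism, exhibiting the removed indecomposable as an illegitimate summand of the cover $D\in\mathcal{C}''$, is the same.
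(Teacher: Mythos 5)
Your proposal is correct and is exactly the proof the paper intends: Theorem \ref{thm_mutation_dual} is stated there without proof as the dual of Theorem \ref{thm_mutation}, and your argument dualizes that proof step for step (Ext-projectivity of $Z$ doing the work of Ext-injectivity of $X$ in part (a); the $\mathcal{C}''$-cover of $\tau Z$ together with \cite[theorem 3.1]{JP} replacing the $\mathcal{C}'$-envelope of $\tau^{-1}X$ and \cite[theorem 3.2]{JP} in part (b); minimality plus the $5$-Lemma yielding $D\cong A$). The only blemish is your justification that $B\in\underline{\mathcal{C}}$ in part (b): what is needed is $A\notin\text{add}\,B$, which follows from $\End(A)\cong k$ and $\alpha$ being minimal left almost split, not from $Z\notin\text{add}\,B$ (automatic since $Z\notin\mathcal{C}$) --- but this exactly mirrors the corresponding slip in the paper's own proof of Theorem \ref{thm_mutation}(b), which invokes $\End(X)$ and $\xi$ where $\End(C)$ and minimality of $\beta$ are what is actually used.
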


\begin{corollary}
In the setup of Theorem \ref{thm_mutation_dual}, suppose that $\mathcal{T}$ is $2$-Calabi-Yau and $\End(Z)$ and $\End(A)$ are division rings. Then $\mathcal{C}''$ is closed under extensions if and only if $Z\in P(\mathcal{C}'')$.
\end{corollary}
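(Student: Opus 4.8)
The final statement is the dual of Corollary \ref{coro_mutation}, so the plan is to obtain it by dualizing that corollary's proof, which in turn reduces to Theorem \ref{thm_mutation_dual} together with Remark \ref{remark_2CY_Ext}. Since the corollary is a clean biconditional packaging of an already-established theorem, the main work has already been absorbed into Theorem \ref{thm_mutation_dual}; what remains is to record the two implications and check that the $2$-Calabi-Yau hypothesis lets us identify the two $\Ext$-conditions that appear there.

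First I would invoke Remark \ref{remark_2CY_Ext}: since $\mathcal{T}$ is $2$-Calabi-Yau, for all $M,N\in\mathcal{T}$ we have $\Ext^1(M,N)\cong D\Ext^1(N,M)$, so $\Ext$-projective and $\Ext$-injective objects coincide in every additive subcategory of $\mathcal{T}$, in particular in $\mathcal{C}''$. Thus $Z\in I(\mathcal{C}'')$ if and only if $Z\in P(\mathcal{C}'')$, and more usefully $Z\in P(\mathcal{C}'')\cap I(\mathcal{C}'')$ is equivalent to the single condition $Z\in P(\mathcal{C}'')$.

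For the forward direction, suppose $\mathcal{C}''$ is closed under extensions. Since $\End(Z)\cong\End(A)\cong k$, part (b) of Theorem \ref{thm_mutation_dual} applies and yields $Z\in P(\mathcal{C}'')$, which is the desired conclusion. For the converse, suppose $Z\in P(\mathcal{C}'')$. By the $2$-Calabi-Yau identification above this gives $Z\in P(\mathcal{C}'')\cap I(\mathcal{C}'')$, so part (a) of Theorem \ref{thm_mutation_dual} applies and shows that $\mathcal{C}''$ is closed under extensions. Combining the two directions gives the stated equivalence.

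I do not anticipate a genuine obstacle here: the statement is formally the dual of Corollary \ref{coro_mutation}, and since both the underlying theorem (Theorem \ref{thm_mutation_dual}) and the symmetry of $\Ext$ under $2$-Calabi-Yau duality are already available, the proof is essentially a two-line bookkeeping argument. The only point requiring minor care is making sure the hypotheses of the two parts of Theorem \ref{thm_mutation_dual} are matched correctly --- namely that $\End(Z)\cong\End(A)\cong k$ is exactly what part (b) needs, and that the collapse of $P$ and $I$ supplies precisely the hypothesis $Z\in P(\mathcal{C}'')\cap I(\mathcal{C}'')$ of part (a). Indeed, the paper already states it without proof, signalling that it follows from the dual theorem exactly as Corollary \ref{coro_mutation} follows from Theorem \ref{thm_mutation}.
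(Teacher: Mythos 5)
Your proof is correct and follows exactly the route the paper intends: the paper leaves this corollary unproved because it is the dual of Corollary \ref{coro_mutation}, whose proof is precisely your argument (Remark \ref{remark_2CY_Ext} collapses $P(\mathcal{C}'')$ and $I(\mathcal{C}'')$, and the two implications are then parts (a) and (b) of Theorem \ref{thm_mutation_dual}). The hypotheses are matched correctly in both directions, so there is nothing to add.
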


\section{Subcategories of the form $\mathcal{C}'$ and mutations of $\mathcal{C}$}\label{section_mutation_same}
As mentioned before, the idea of how to construct $\mathcal{C}'$ from $\mathcal{C}$, by removing the third term of a left-weak Auslander-Reiten triangle $\Delta$ in $\mathcal{C}$ and replacing it with the first term of $\Delta$, is similar to the classic mutation from \cite{IY}. In general, these two constructions are different. However, they have the same result under some extra assumptions, as we show in this section. 

\begin{defn}[{\cite[definition 3.1]{ZZ}}]
Let $\mathcal{D}\subseteq\mathcal{C}$ be an additive functorially finite rigid subcategory. For any object $C\in\mathcal{C}$, let $\delta:D\rightarrow C$ be a $\mathcal{D}$-cover and complete it to a triangle of the form $\mu_{\mathcal{D}}(C)\rightarrow D\xrightarrow{\delta}C\rightarrow \Sigma D$. Then $\mu_{\mathcal{D}}(C)$ is the \textit{backward $\mathcal{D}$-mutation of $C$} and the \textit{backward $\mathcal{D}$-mutation of $\mathcal{C}$} is
\begin{align*}
\mu(\mathcal{C};\mathcal{D}):=\text{add }(\{ \mu_{\mathcal{D}}(C)\mid C\in\mathcal{C} \}\cup\mathcal{D}).
\end{align*}
\end{defn}

\begin{lemma}\label{lemma_mutationZZ}
Assume that $\mathcal{T}$ is $2$-Calabi-Yau and let $\widetilde{\mathcal{C}}\subseteq \mathcal{T}$ be an additive subcategory closed under extensions which has finitely many indecomposable objects. Letting $\mathcal{D}=P(\widetilde{\mathcal{C}})$, we have that
\begin{align*}
    \mu(\widetilde{\mathcal{C}};\mathcal{D})=\widetilde{\mathcal{C}}.
\end{align*}
\end{lemma}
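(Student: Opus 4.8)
The plan is to regard $\widetilde{\mathcal{C}}$ as an exact category and to recognise the backward $\mathcal{D}$-mutation triangles as syzygy and cosyzygy sequences. Since $\widetilde{\mathcal{C}}$ is closed under extensions in the triangulated category $\mathcal{T}$, it carries a natural exact structure whose conflations are the triangles of $\mathcal{T}$ having all three terms in $\widetilde{\mathcal{C}}$; with respect to this structure the projective objects are exactly those in $P(\widetilde{\mathcal{C}})$ and the injective objects exactly those in $I(\widetilde{\mathcal{C}})$. Because $\mathcal{T}$ is $2$-Calabi-Yau, Remark \ref{remark_2CY_Ext} gives $P(\widetilde{\mathcal{C}})=I(\widetilde{\mathcal{C}})=\mathcal{D}$, so $\mathcal{D}$ is simultaneously the class of projectives and of injectives. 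Note also that $\mathcal{D}$ is rigid (its objects are $\Ext$-projective, hence have no self-extensions) and, as $\widetilde{\mathcal{C}}$ has finitely many indecomposables, functorially finite, so $\mu(\widetilde{\mathcal{C}};\mathcal{D})$ is defined. I would then prove the two inclusions separately.

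For $\mu(\widetilde{\mathcal{C}};\mathcal{D})\subseteq\widetilde{\mathcal{C}}$, note first that $\mathcal{D}\subseteq\widetilde{\mathcal{C}}$, so it suffices to check $\mu_{\mathcal{D}}(C)\in\widetilde{\mathcal{C}}$ for every $C\in\widetilde{\mathcal{C}}$. Taking a $\mathcal{D}$-cover $\delta\colon D\to C$ and completing to a triangle $\mu_{\mathcal{D}}(C)\to D\xrightarrow{\delta}C\to\Sigma\mu_{\mathcal{D}}(C)$, the key point is that $\delta$ is a deflation of the exact structure, so this triangle is a conflation and $\mu_{\mathcal{D}}(C)$ is precisely a syzygy of $C$; in particular it lies in $\widetilde{\mathcal{C}}$. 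Granting this, every generator of $\mu(\widetilde{\mathcal{C}};\mathcal{D})$ lies in $\widetilde{\mathcal{C}}$ and the inclusion follows.

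For $\widetilde{\mathcal{C}}\subseteq\mu(\widetilde{\mathcal{C}};\mathcal{D})$, let $C\in\widetilde{\mathcal{C}}$ be indecomposable. If $C\in\mathcal{D}$ we are done; otherwise $C$ is not injective (as $I(\widetilde{\mathcal{C}})=\mathcal{D}$), so I would take the injective envelope, giving a conflation $C\to I\xrightarrow{h}C'\to\Sigma C$ with $I\in\mathcal{D}$ and $C'\in\widetilde{\mathcal{C}}$, which exhibits $C$ as a cosyzygy. Because $I$ is projective and $h$ is a deflation, every morphism from an object of $\mathcal{D}$ to $C'$ factors through $h$, so $h$ is a $\mathcal{D}$-precover; minimality of the injective envelope makes $h$ right minimal, hence a $\mathcal{D}$-cover. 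Thus this triangle legitimately computes the backward mutation and $\mu_{\mathcal{D}}(C')\cong C$, whence $C\in\mu(\widetilde{\mathcal{C}};\mathcal{D})$. Here the $2$-Calabi-Yau identification $I(\widetilde{\mathcal{C}})=\mathcal{D}$ is essential: it guarantees that the injective envelope $I$ appearing in the cosyzygy sequence really is a $\mathcal{D}$-object, so that the cosyzygy triangle is a genuine mutation triangle.

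The main obstacle is the structural input underlying both halves, namely that $\widetilde{\mathcal{C}}$ has enough projectives and enough injectives, equivalently that a minimal right $\mathcal{D}$-approximation $\delta\colon D\to C$ is always a deflation, so that its cone-shift remains in $\widetilde{\mathcal{C}}$ (and dually for injectives). This is where the hypotheses of finitely many indecomposables and $2$-Calabi-Yau are used seriously; once it is in place, $\widetilde{\mathcal{C}}$ is a Frobenius exact category with projective-injectives $\mathcal{D}$, its stable category is triangulated with suspension the cosyzygy functor, and the syzygy functor producing the backward mutations is an autoequivalence, which makes the surjectivity used in the second inclusion transparent. I would establish the enough-projectives statement by an induction over the finitely many indecomposables of $\widetilde{\mathcal{C}}$, peeling off right $\mathcal{D}$-approximations and using extension-closedness to keep the successive kernels inside $\widetilde{\mathcal{C}}$; the degenerate case $\mathcal{D}=0$, which forces $\widetilde{\mathcal{C}}$ to be $\Sigma$-stable so that $\mu_{\mathcal{D}}(C)=\Sigma^{-1}C$ and $\mu(\widetilde{\mathcal{C}};\mathcal{D})=\Sigma^{-1}\widetilde{\mathcal{C}}=\widetilde{\mathcal{C}}$, would be dealt with directly.
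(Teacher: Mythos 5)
Your global architecture is viable, and the parts you actually verify are essentially correct: the projectives (respectively injectives) of the exact structure on $\widetilde{\mathcal{C}}$ really are $P(\widetilde{\mathcal{C}})$ (respectively $I(\widetilde{\mathcal{C}})$), the $2$-Calabi-Yau property identifies both with $\mathcal{D}$, and in a cosyzygy triangle $C\to I\xrightarrow{h}C'\to\Sigma C$ the map $h$ is indeed a $\mathcal{D}$-cover, so that $C\cong\mu_{\mathcal{D}}(C')$. (Two small slips there: the factorization of maps $D'\to C'$ uses projectivity of the source $D'\in\mathcal{D}$, not of $I$; and right minimality of $h$ does not follow from left minimality of $C\to I$ --- it follows from $C$ being indecomposable and $C\to I$ not being a split monomorphism, hypotheses you do have.) The genuine gap is the claim you yourself flag as ``the main obstacle'' and then never prove: that $\widetilde{\mathcal{C}}$ has enough projectives and enough injectives in this exact structure, i.e.\ that the cocone of a $\mathcal{D}$-cover $D\to C$ (and dually the cone of a minimal left $\mathcal{D}$-approximation) lies in $\widetilde{\mathcal{C}}$. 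Both of your inclusions stand on this claim. Your proposed justification --- an induction over the finitely many indecomposables, ``peeling off right $\mathcal{D}$-approximations and using extension-closedness to keep the successive kernels inside $\widetilde{\mathcal{C}}$'' --- does not work: closure under extensions puts the \emph{middle} term of a triangle in $\widetilde{\mathcal{C}}$ when both \emph{end} terms are in $\widetilde{\mathcal{C}}$; it gives no control whatsoever over the cocone of a morphism between two objects of $\widetilde{\mathcal{C}}$, which is exactly what is in question, and it is not even clear what quantity the induction would be on. The same unproved claim is hidden in your ``degenerate case'': that $\mathcal{D}=P(\widetilde{\mathcal{C}})=0$ forces $\Sigma\widetilde{\mathcal{C}}=\widetilde{\mathcal{C}}$ is precisely the statement that cosyzygies stay in $\widetilde{\mathcal{C}}$ when the injective envelope is zero, so it cannot be ``dealt with directly''.

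This missing step is not a routine verification; it is essentially the whole content of the external results the paper quotes in its place. The paper first produces a cotorsion pair $(\mathcal{X},\widetilde{\mathcal{C}})$ with $\mathcal{X}={}^{\perp_1}\widetilde{\mathcal{C}}$ from extension-closure and functorial finiteness (dual of \cite[proposition 2.3(1)]{IY}), then cites the dual of \cite[proposition 3.7(3)]{ZZ} for $\mu(\widetilde{\mathcal{C}};\mathcal{D})\subseteq\widetilde{\mathcal{C}}$ --- which is exactly your ``$\mathcal{D}$-covers are deflations'' statement --- and replaces your cosyzygy argument for the reverse inclusion by the dual of \cite[proposition 3.7(1)]{ZZ}, a bijection $\text{Ind }\widetilde{\mathcal{C}}\leftrightarrow\text{Ind }\mu(\widetilde{\mathcal{C}};\mathcal{D})$, combined with finiteness of these sets. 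So as written your argument assumes a statement of the same depth as the lemma itself. To repair it you must either prove the deflation property (for instance via the cotorsion pair: one must show that every morphism from ${}^{\perp_1}\widetilde{\mathcal{C}}$ to $C$ factors through the $\mathcal{D}$-cover of $C$, which is where the $2$-Calabi-Yau hypothesis does real work), or cite the same results of \cite{IY} and \cite{ZZ} that the paper does, at which point the Frobenius picture is an attractive repackaging rather than an independent proof.
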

\begin{proof}
First note that since $\mathcal{C}$ has finitely many indecomposable objects, both $\mathcal{C}$ and $\widetilde{\mathcal{C}}$ are functorially finite in $\mathcal{T}$. Then, since $\widetilde{\mathcal{C}}$ is an extension closed subcategory functorially finite in $\mathcal{T}$, by the dual of \cite[proposition 2.3(1)]{IY}, there exists a cotorsion pair of the form $(\mathcal{X},\widetilde{\mathcal{C}})$.
Since $\mathcal{D}=P(\widetilde{\mathcal{C}})$, by the dual of \cite[proposition 3.7(3)]{ZZ}, we have that $\mu(\widetilde{\mathcal{C}};\mathcal{D})\subseteq\widetilde{\mathcal{C}}$. Moreover, by the dual of \cite[proposition 3.7(1)]{ZZ}, there is a bijection between $\text{Ind }\widetilde{\mathcal{C}}$ and $\text{Ind }\mu(\widetilde{\mathcal{C}};\mathcal{D})$ and, since these are finite sets, we conclude that $\mu(\widetilde{\mathcal{C}};\mathcal{D})=\widetilde{\mathcal{C}}$.
\end{proof}

\begin{lemma}\label{lemma_mutation_CisX}
In the setup of Theorem \ref{thm_mutation}, suppose that $\mathcal{T}$ is $2$-Calabi-Yau, $X\in P(\mathcal{C}')$ and $\End(C)$ is a division ring. Let $\mathcal{D}\subseteq \mathcal{T}$ be the additive subcategory generated by all the indecomposable $\Ext$-projectives in $\mathcal{C}$ apart from $C$ and note that $\mathcal{D}$ is rigid. Assume $\mathcal{D}$ is functorially finite in $\mathcal{C}$. Then $X\cong \mu_{\mathcal{D}}(C)$.
\end{lemma}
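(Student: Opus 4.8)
The plan is to prove that the morphism $\beta\colon B\to C$ from the left-weak Auslander--Reiten triangle $(\star)$ is itself a $\mathcal{D}$-cover of $C$. Granting this, the conclusion follows quickly: since $\mathcal{D}$ is functorially finite in $\mathcal{C}$, a $\mathcal{D}$-cover $\delta\colon D\to C$ exists and defines $\mu_{\mathcal{D}}(C)$ through the triangle $\mu_{\mathcal{D}}(C)\to D\xrightarrow{\delta}C\to\Sigma\mu_{\mathcal{D}}(C)$. As $\mathcal{D}$-covers are unique up to isomorphism (by the usual argument, cf.\ Lemma \ref{lemma_ras_ind}), there is an isomorphism $\psi\colon B\to D$ with $\delta\circ\psi=\beta$; completing $\beta$ and $\delta$ to the two triangles and applying the axioms of triangulated categories together with the $5$-Lemma to the morphism of triangles induced by $(\psi,1_C)$ then yields an isomorphism $X\xrightarrow{\sim}\mu_{\mathcal{D}}(C)$, exactly as in the proof of Theorem \ref{prop_mras}(b).

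So it suffices to check that $\beta$ is a $\mathcal{D}$-cover, namely that (1) $B\in\mathcal{D}$, (2) $\beta$ is a $\mathcal{D}$-precover, and (3) $\beta$ is right minimal. Condition (3) is part of the hypothesis that $\beta$ is minimal right almost split. For (2), any $g\colon D'\to C$ with $D'\in\mathcal{D}$ fails to be a split epimorphism, since otherwise $C$ would be a summand of $D'\in\mathcal{D}$, contradicting that $C$ is indecomposable with $C\notin\mathcal{D}$; as $\mathcal{D}\subseteq\mathcal{C}$ and $\beta$ is right almost split in $\mathcal{C}$, such $g$ factors through $\beta$, so $\beta$ is a $\mathcal{D}$-precover once (1) is known.

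The crux is (1). First I would show $C$ is not a direct summand of $B$: if $C$ were a summand, the corresponding component $C\to C$ of $\beta$ would lie in $\End(C)\cong k$, and, $\beta$ not being a split epimorphism, it could not be an isomorphism, hence would be zero, contradicting right minimality of $\beta$. Thus every indecomposable summand of $B$ lies in $\widetilde{\mathcal{C}}$, and it remains to prove $B\in P(\mathcal{C})$. Since $\mathcal{C}=\mathrm{add}(\widetilde{\mathcal{C}}\cup C)$, it is enough to show $\Ext^1(B,C)=0$ and $\Ext^1(B,\widetilde{C})=0$ for all $\widetilde{C}\in\widetilde{\mathcal{C}}$. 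The first vanishing is the key point and is immediate from $2$-Calabi--Yau duality, using $D(-)=\Hom_k(-,k)$: we have $\Ext^1(B,C)\cong D\Ext^1(C,B)=0$, because $C$ is $\Ext$-projective and $B\in\mathcal{C}$. For the second, apply $\Hom_{\mathcal{T}}(-,\Sigma\widetilde{C})$ to $(\star)$ to obtain the exact sequence $\Ext^1(C,\widetilde{C})\to\Ext^1(B,\widetilde{C})\to\Ext^1(X,\widetilde{C})$; the left term vanishes because $C$ is $\Ext$-projective, while the right term vanishes because $X\in P(\mathcal{C}')$ and $\widetilde{C}\in\mathcal{C}'$. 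Hence $\Ext^1(B,\widetilde{C})=0$, so $\Ext^1(B,\mathcal{C})=0$, every indecomposable summand of $B$ is an $\Ext$-projective of $\mathcal{C}$ not isomorphic to $C$, and therefore $B\in\mathcal{D}$.

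The only genuinely delicate point is (1), and within it the vanishing $\Ext^1(B,C)=0$: this is where the $2$-Calabi--Yau hypothesis is essential, as it converts the $\Ext$-projectivity of $C$ (a statement about $\Ext^1(C,-)$) into the vanishing of $\Ext^1(B,C)$, while the hypothesis $X\in P(\mathcal{C}')$ is used precisely to control $\Ext^1(B,\widetilde{\mathcal{C}})$ and $\End(C)\cong k$ is used precisely to remove $C$ from $B$. I expect the precover property, right minimality, and the final uniqueness/$5$-Lemma step to be routine.
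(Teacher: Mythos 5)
Your proposal is correct and takes essentially the same route as the paper: the heart of both arguments is showing $B\in\mathcal{D}$, via the identical three steps ($C$ is not a direct summand of $B$ because $\End(C)\cong k$ together with right minimality of $\beta$; $\Ext^1(B,C)=0$ by $2$-Calabi-Yau duality from the $\Ext$-projectivity of $C$; and $\Ext^1(B,\widetilde{\mathcal{C}})=0$ from the exact sequence obtained by applying $\Hom(-,\Sigma \widetilde{C})$ to $(\star)$, using $\Ext$-projectivity of $C$ in $\mathcal{C}$ and of $X$ in $\mathcal{C}'$). The only difference is the packaging of the endgame: you observe that $\beta$ is itself a $\mathcal{D}$-cover and conclude by uniqueness of covers plus the $5$-Lemma, whereas the paper keeps the abstract $\mathcal{D}$-cover $\delta\colon D\to C$, plays the two factorizations (of $\delta$ through $\beta$ and of $\beta$ through $\delta$) against each other, and additionally needs indecomposability of $X$ to pass from ``direct summand'' to ``isomorphic'' --- your version is marginally cleaner but mathematically the same argument.
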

\begin{proof}
Consider the triangle $(\star)$ from Theorem \ref{thm_mutation}. Let $\delta: D\rightarrow C$ be a $\mathcal{D}$-cover and note that $\delta$ is not a split epimorphism since $C$ is not in $\mathcal{D}$. Then, since $\beta$ is right almost split in $\mathcal{C}$ and $D\in\mathcal{D}\subseteq\mathcal{C}$, it follows that $\delta$ factors through $\beta$ and we obtain a morphism of triangles of the form:
\begin{align*}
    \xymatrix{
    \mu_{\mathcal{D}}(C)\ar[r]\ar[d]^\eta& D\ar[r]^\delta\ar[d]^\varphi& C\ar[r]\ar@{=}[d]&\Sigma \mu_{\mathcal{D}}(C)\ar[d]\\
    X\ar[r]_\xi & B\ar[r]_{\beta}&C\ar[r]_\gamma &\Sigma X,
    }
\end{align*}
where $\eta$ exists by the axioms of triangulated categories. For $A\in\mathcal{C}$, consider the exact sequence:
\begin{align}
    \Hom(C,\Sigma A)\rightarrow \Hom(B,\Sigma A)\rightarrow \Hom(X,\Sigma A), \tag{\ddag}
\end{align}
and note that $\Hom(C,\Sigma A)=0$ since $C$ is $\Ext$-projective in $\mathcal{C}$. Without loss of generality, assume that $A$ is indecomposable. If $A\in\widetilde{\mathcal{C}}\subset\mathcal{C}'$, then $\Hom (X,\Sigma A)=0$ since $X$ is $\Ext$-projective in $\mathcal{C}'$. Then, exactness of $(\ddag)$ forces $\Hom(B,\Sigma A)=0$. If $A\not\in\widetilde{\mathcal{C}}$, then $A=C$ and since $C$ is $\Ext$-injective in $\mathcal{C}$, then $\Hom(B,\Sigma A)=0$. Hence $\Ext^1(B,A)=0$ for any $A$ in $\mathcal{C}$ and so $B$ is $\Ext$-projective in $\mathcal{C}$. Since $\End(C)$ is a division ring, we have that $C$ is not a direct summand of $B$ and so $B\in\mathcal{D}$. Note that as $\delta$ is a $\mathcal{D}$-cover, then $\beta$ factors through $\delta$ and, by the axioms of triangulated categories, we obtain a morphism of triangles of the form:
\begin{align*}
    \xymatrix{
    X\ar[r]^\xi\ar[d]^\nu& B\ar[r]^\beta\ar[d]^\phi& C\ar[r]\ar@{=}[d]&\Sigma X\ar[d]\\
    \mu_{\mathcal{D}}(C)\ar[r]&D\ar[r]_\delta&C\ar[r]&\Sigma \mu_{\mathcal{D}}(C).
    }
\end{align*}
Then $\delta=\delta\phi \varphi$ and, as $\delta$ is right minimal, then $\phi\varphi: D\rightarrow D$ is an isomorphism. By the $5$-Lemma, see \cite[exercise\ 10.2.2]{W}, it follows that $\nu\eta:\mu_{\mathcal{D}}(C)\rightarrow \mu_{\mathcal{D}}(C)$ is an isomorphism. Hence $\eta$ is a split epimorphism and $\mu_{\mathcal{D}}(C)$ is a direct summand of $X$. Since $X$ is indecomposable, it follows that $X\cong \mu_{\mathcal{D}}(C)$.
\end{proof}

\begin{theorem}\label{thm_mutZZ_same}
Assume $\mathcal{C}$ has finitely many indecomposables and $C\in P(\mathcal{C})$ is an indecomposable. Then there is a left-weak Auslander-Reiten triangle in $\mathcal{C}$ of the form
\begin{align*}
X\xrightarrow{\xi} B\xrightarrow{\beta} C\xrightarrow{\gamma}\Sigma X.
\end{align*}
Let $\widetilde{\mathcal{C}}\subseteq \mathcal{T}$ be the additive subcategory with $\text{Ind }\widetilde{\mathcal{C}}=\text{Ind }(\mathcal{C})\setminus C$ and define $\mathcal{C}':=\text{add }(\widetilde{\mathcal{C}}\cup X)$.
Suppose, moreover, that $\mathcal{T}$ is $2$-Calabi-Yau and $X\in P(\mathcal{C}')$. Then, letting $\mathcal{D}=\text{add }P(\widetilde{\mathcal{C}})$, we have
\begin{align*}
    \mu (\mathcal{C}; \mathcal{D})=\mathcal{C}',
\end{align*}
 and this is a subcategory of $\mathcal{T}$ closed under extensions.
\end{theorem}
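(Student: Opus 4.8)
The plan is to deduce everything from the two preparatory results, Lemma \ref{lemma_mutationZZ} and Lemma \ref{lemma_mutation_CisX}, once the relevant subcategories are known to be extension closed. Since $\mathcal{C}$ has finitely many indecomposables it is functorially finite in $\mathcal{T}$, so we are in the setup of Theorem \ref{thm_mutation}; the triangle $(\star)$ exists by Remark \ref{rmk_star_exists}, and $\mathcal{D}=\text{add }P(\widetilde{\mathcal{C}})$, having finitely many indecomposables and being rigid, is functorially finite, so the backward mutations $\mu(\mathcal{C};\mathcal{D})$ and $\mu(\widetilde{\mathcal{C}};\mathcal{D})$ are both well defined.

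First I would establish the extension-closedness. As $\mathcal{T}$ is $2$-Calabi-Yau, Remark \ref{remark_2CY_Ext} gives $P(\mathcal{C}')=I(\mathcal{C}')$, so the hypothesis $X\in P(\mathcal{C}')$ upgrades to $X\in P(\mathcal{C}')\cap I(\mathcal{C}')$; Theorem \ref{thm_mutation}(a) then shows that $\mathcal{C}'$ is closed under extensions, which is the second assertion of the statement. The first half of the proof of Theorem \ref{thm_mutation}(a), which uses only that $X$ is $\Ext$-injective in $\mathcal{C}'$, moreover shows that any triangle with end terms in $\widetilde{\mathcal{C}}$ has middle term in $\widetilde{\mathcal{C}}$; hence $\widetilde{\mathcal{C}}$ is itself closed under extensions. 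This is precisely what makes Lemma \ref{lemma_mutationZZ} applicable to $\widetilde{\mathcal{C}}$, and since $\mathcal{D}=P(\widetilde{\mathcal{C}})$ by definition, that lemma yields $\mu(\widetilde{\mathcal{C}};\mathcal{D})=\widetilde{\mathcal{C}}$.

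Next I would compute the mutation of the single object $C$. To invoke Lemma \ref{lemma_mutation_CisX} I first reconcile the two descriptions of $\mathcal{D}$: by $2$-Calabi-Yau duality $C\in P(\mathcal{C})$ is also $\Ext$-injective in $\mathcal{C}$, so $\Ext^{1}(D,C)=0$ for every $D\in\mathcal{C}$, and therefore an indecomposable object of $\widetilde{\mathcal{C}}$ is $\Ext$-projective in $\widetilde{\mathcal{C}}$ if and only if it is $\Ext$-projective in $\mathcal{C}$. Thus $\mathcal{D}=\text{add }P(\widetilde{\mathcal{C}})$ coincides with the subcategory generated by the indecomposable $\Ext$-projectives of $\mathcal{C}$ other than $C$, which is the $\mathcal{D}$ of Lemma \ref{lemma_mutation_CisX}. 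Lemma \ref{lemma_mutation_CisX} then gives $\mu_{\mathcal{D}}(C)\cong X$ (its hypotheses being $2$-Calabi-Yauness, $X\in P(\mathcal{C}')$, functorial finiteness of $\mathcal{D}$, and $\End(C)\cong k$).

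Finally I would assemble the two computations. In the Krull--Schmidt category $\mathcal{T}$ a $\mathcal{D}$-cover of a direct sum is the direct sum of $\mathcal{D}$-covers, so backward $\mathcal{D}$-mutation distributes over direct sums; since every object of $\mathcal{C}$ has the form $C^{n}\oplus\widetilde{A}$ with $\widetilde{A}\in\widetilde{\mathcal{C}}$, the family $\{\mu_{\mathcal{D}}(A)\mid A\in\mathcal{C}\}$ generates the same additive subcategory as $\{\mu_{\mathcal{D}}(C)\}\cup\{\mu_{\mathcal{D}}(\widetilde{A})\mid\widetilde{A}\in\widetilde{\mathcal{C}}\}$. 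Combining this with the previous two paragraphs gives
\begin{align*}
\mu(\mathcal{C};\mathcal{D})=\text{add}\big(\{\mu_{\mathcal{D}}(C)\}\cup\mu(\widetilde{\mathcal{C}};\mathcal{D})\big)=\text{add}\big(\{X\}\cup\widetilde{\mathcal{C}}\big)=\mathcal{C}',
\end{align*}
as required. I expect the main obstacle to be bookkeeping rather than a single hard step: one must check carefully that removing $C$ leaves $\widetilde{\mathcal{C}}$ extension closed and that the two incarnations of $\mathcal{D}$ agree, both of which rest on the $2$-Calabi-Yau symmetry exchanging $\Ext$-projectives and $\Ext$-injectives, and one must ensure the hypotheses of Lemma \ref{lemma_mutation_CisX} (notably $\End(C)\cong k$) are available.
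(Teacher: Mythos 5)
Your proposal is correct and follows essentially the same route as the paper's own proof: extension-closedness of $\mathcal{C}'$ and $\widetilde{\mathcal{C}}$ via Theorem \ref{thm_mutation}(a) combined with the $2$-Calabi-Yau symmetry, $\mu(\widetilde{\mathcal{C}};\mathcal{D})=\widetilde{\mathcal{C}}$ from Lemma \ref{lemma_mutationZZ}, the identification $\text{Ind }P(\mathcal{C})=\text{Ind }P(\widetilde{\mathcal{C}})\cup\{C\}$ so that $\mathcal{D}$ is the subcategory appearing in Lemma \ref{lemma_mutation_CisX}, then $\mu_{\mathcal{D}}(C)\cong X$ from that lemma, and finally additivity of backward mutation to assemble $\mu(\mathcal{C};\mathcal{D})=\text{add }(\mu(\widetilde{\mathcal{C}};\mathcal{D})\cup\mu_{\mathcal{D}}(C))=\mathcal{C}'$. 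Your closing caveat about $\End(C)\cong k$ is apt but is not a defect relative to the paper: the paper's proof likewise invokes Lemma \ref{lemma_mutation_CisX} without this hypothesis appearing in the theorem's statement, so you reproduce (and in fact explicitly flag) the one loose end already present in the original argument.
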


\begin{proof}
First note that $X\in P(\mathcal{C'})$ implies $X\in P(\widetilde{\mathcal{C}})$, so $X\in I(\widetilde{\mathcal{C}})$ by Remark \ref{remark_2CY_Ext}. Hence $\widetilde{\mathcal{C}}$ is closed under extensions by Theorem \ref{thm_mutation}(a). By Lemma \ref{lemma_mutationZZ}, we have that $\mu(\widetilde{\mathcal{C}};\mathcal{D})=\widetilde{\mathcal{C}}$. Moreover, we have that $\text{Ind } P(\mathcal{C})=\text{Ind }P(\widetilde{\mathcal{C}})\cup \{ C \}$. In fact, if $D\in\text{Ind } P(\mathcal{C})$, then either $D=C$ or $D\in\widetilde{\mathcal{C}}$, in which case $\Ext^1(D,\widetilde{\mathcal{C}})=0$ since $\Ext^1(D,\mathcal{C})=0$ and $\widetilde{\mathcal{C}}\subset\mathcal{C}$. On the other hand, if $\widetilde{D}\in\text{Ind }P(\widetilde{\mathcal{C}})$, we have $\Ext^1(\widetilde{D}, \widetilde{\mathcal{C}})=0$ and, since $C$ is $\Ext$-injective in $\mathcal{C}$, also $\Ext^1(\widetilde{D},C)=0$ so that $\Ext^1(\widetilde{D},\mathcal{C})=0$.

So $\mathcal{D}=P(\widetilde{\mathcal{C}})$ is the additive category generated by all the indecomposable $\Ext$-projectives in $\mathcal{C}$ apart from $C$. Hence $\mathcal{D}$ is rigid in $\mathcal{C}$ and we can mutate $\mathcal{C}$ with respect to $\mathcal{D}$. Since $\mathcal{C}$ has finitely many indecomposables and $\mathcal{C}$ is Hom-finite, then $\mathcal{D}\subseteq \mathcal{C}$ is functorially finite. Then, by Lemma \ref{lemma_mutation_CisX}, we have that $\mu_{\mathcal{D}}(C)\cong X$, where $X$ is the first term of the triangle $(\star)$ from Theorem \ref{thm_mutation}. Hence, we conclude that 
\begin{align*}
    \mu(\mathcal{C};\mathcal{D})=\text{add }(\mu (\widetilde{\mathcal{C}};\mathcal{D})\cup \mu_{\mathcal{D}}(C))=\text{add }(\widetilde{\mathcal{C}}\cup X)=\mathcal{C}',
\end{align*}
and this subcategory of $\mathcal{T}$ is closed under extensions by Theorem \ref{thm_mutation}.
\end{proof}

We present the definition of forward $\mathcal{D}$-mutation and state, without proof, the dual of Theorem \ref{thm_mutZZ_same}.

\begin{defn}[{\cite[definition 3.1]{ZZ}}]
Let $\mathcal{D}\subseteq\mathcal{C}$ be an additive functorially finite rigid subcategory. For any object $A\in\mathcal{C}$, let $\alpha:A\rightarrow D$ be a $\mathcal{D}$-envelope and complete it to a triangle of the form $A\xrightarrow{\alpha} D\rightarrow \mu^{-1}_{\mathcal{D}}(A)\rightarrow\Sigma A$. Then $\mu^{-1}_{\mathcal{D}}(A)$ is the \textit{forward $\mathcal{D}$-mutation of $A$} and we define the \textit{forward $\mathcal{D}$-mutation of $\mathcal{C}$} to be
\begin{align*}
\mu^{-1}(\mathcal{C};\mathcal{D}):=\text{add }(\{ \mu^{-1}_{\mathcal{D}}(A)\mid A\in\mathcal{C} \}\cup\mathcal{D}).
\end{align*}
\end{defn}

\begin{theorem}
In the setup of Theorem \ref{thm_mutation_dual}, suppose that $\mathcal{T}$ is $2$-Calabi-Yau, $\mathcal{C}$ has finitely many indecomposables and $Z\in I(\mathcal{C}'')$. Then, letting $\mathcal{D}=\text{add }I(\underline{\mathcal{C}})$, we have
\begin{align*}
    \mu^{-1}(\mathcal{C};\mathcal{D})=\mathcal{C}'',
\end{align*}
and this is a subcategory of $\mathcal{T}$ closed under extensions.
\end{theorem}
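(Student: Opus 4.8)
The plan is to run the argument of Theorem \ref{thm_mutZZ_same} under the duality that reverses all arrows, interchanges $\Ext$-projective with $\Ext$-injective, $\mathcal{C}$-covers with $\mathcal{C}$-envelopes, and backward mutation $\mu_{\mathcal{D}}$ with forward mutation $\mu^{-1}_{\mathcal{D}}$, replacing the triangle $(\star)$ by the right-weak Auslander-Reiten triangle $A\xrightarrow{\alpha}B\xrightarrow{\beta}Z\xrightarrow{\zeta}\Sigma A$ of Theorem \ref{thm_mutation_dual}. Because $\mathcal{T}$ is $2$-Calabi-Yau, Remark \ref{remark_2CY_Ext} identifies $\Ext$-projective and $\Ext$-injective objects in every additive subcategory, so the two sides of the duality may be used interchangeably throughout.

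First I would record the dual forms of the two auxiliary lemmas, each established by the evident dual argument. The dual of Lemma \ref{lemma_mutationZZ} asserts that for $2$-Calabi-Yau $\mathcal{T}$ and an extension closed additive subcategory $\underline{\mathcal{C}}$ with finitely many indecomposables, setting $\mathcal{D}=I(\underline{\mathcal{C}})$ yields $\mu^{-1}(\underline{\mathcal{C}};\mathcal{D})=\underline{\mathcal{C}}$; here one invokes the cotorsion pair $(\underline{\mathcal{C}},\mathcal{Y})$ supplied by \cite[proposition 2.3(1)]{IY} together with the bijection and containment of \cite[proposition 3.7]{ZZ}, now applied directly rather than in their dual forms. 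The dual of Lemma \ref{lemma_mutation_CisX} asserts that, under the present hypotheses, $Z\cong\mu^{-1}_{\mathcal{D}}(A)$: one takes a $\mathcal{D}$-envelope $\alpha':A\rightarrow D$, which is not a split monomorphism since $A\notin\mathcal{D}$, factors it through the left almost split $\alpha$ to obtain a morphism of triangles, uses the dual of the exact sequence $(\ddag)$ to show that $B$ is $\Ext$-injective in $\mathcal{C}$, concludes $B\in\mathcal{D}$ from $\End(A)\cong k$, factors $\alpha$ back through $\alpha'$, and applies the $5$-Lemma to exhibit $Z$ as a direct summand of $\mu^{-1}_{\mathcal{D}}(A)$; indecomposability of $Z$ then forces the isomorphism.

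With these in place the body of the proof assembles quickly. Since $\underline{\mathcal{C}}\subseteq\mathcal{C}''$, the hypothesis $Z\in I(\mathcal{C}'')$ gives $Z\in I(\underline{\mathcal{C}})$, and hence $Z\in P(\underline{\mathcal{C}})$ by Remark \ref{remark_2CY_Ext}, so that $Z\in P(\mathcal{C}'')\cap I(\mathcal{C}'')$ and Theorem \ref{thm_mutation_dual}(a) shows $\mathcal{C}''$, and with it its subcategory $\underline{\mathcal{C}}$, is closed under extensions. Next I would verify the dual index identity $\text{Ind }I(\mathcal{C})=\text{Ind }I(\underline{\mathcal{C}})\cup\{A\}$: any indecomposable $\Ext$-injective of $\mathcal{C}$ other than $A$ lies in $I(\underline{\mathcal{C}})$ because $\underline{\mathcal{C}}\subset\mathcal{C}$, and conversely any $\underline{D}\in\text{Ind }I(\underline{\mathcal{C}})$ satisfies $\Ext^1(A,\underline{D})=0$, since $A$ is $\Ext$-projective in $\mathcal{C}$ by Remark \ref{remark_2CY_Ext}, whence $\Ext^1(\mathcal{C},\underline{D})=0$ and $\underline{D}\in I(\mathcal{C})$. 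This identifies $\mathcal{D}=I(\underline{\mathcal{C}})$ with the additive subcategory generated by all indecomposable $\Ext$-injectives of $\mathcal{C}$ apart from $A$; it is rigid and, as $\mathcal{C}$ has finitely many indecomposables, functorially finite in $\mathcal{C}$.

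Finally, combining the dual of Lemma \ref{lemma_mutationZZ}, which gives $\mu^{-1}(\underline{\mathcal{C}};\mathcal{D})=\underline{\mathcal{C}}$, with the dual of Lemma \ref{lemma_mutation_CisX}, which gives $\mu^{-1}_{\mathcal{D}}(A)\cong Z$, I would conclude
\begin{align*}
\mu^{-1}(\mathcal{C};\mathcal{D})=\text{add }(\mu^{-1}(\underline{\mathcal{C}};\mathcal{D})\cup\mu^{-1}_{\mathcal{D}}(A))=\text{add }(\underline{\mathcal{C}}\cup Z)=\mathcal{C}'',
\end{align*}
which is closed under extensions by Theorem \ref{thm_mutation_dual}. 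I expect the main obstacle to be the dual of Lemma \ref{lemma_mutation_CisX}: one must check that the two factorizations really compose to mutually inverse triangle morphisms so that the $5$-Lemma applies, and the step ruling out $A$ as a summand of $B$ genuinely requires $\End(A)\cong k$, the dual of the condition $\End(C)\cong k$ of Lemma \ref{lemma_mutation_CisX}, so one should confirm that this hypothesis is available in the ambient setup exactly as in the primal case.
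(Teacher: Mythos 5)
Your proposal is correct and is essentially the paper's own proof: the paper states this theorem without proof as the dual of Theorem \ref{thm_mutZZ_same}, and your argument is precisely the dualization of that proof, including the dual forms of Lemma \ref{lemma_mutationZZ} (using \cite[proposition 2.3(1)]{IY} and \cite[proposition 3.7]{ZZ} directly rather than their duals) and of Lemma \ref{lemma_mutation_CisX}, followed by the same index identity and assembly. The $\End(A)\cong k$ subtlety you flag is inherited verbatim from the primal case, where Theorem \ref{thm_mutZZ_same} likewise invokes Lemma \ref{lemma_mutation_CisX} without explicitly carrying that endomorphism hypothesis, so your treatment matches the paper's level of rigor exactly.
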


\section{Example: cluster category of Dynkin type $A_n$}\label{section6}
Given a finite quiver with no oriented cycles $Q$, and letting $k=\mathbb{C}$, one can define the cluster category of $Q$, usually denoted $\mathcal{C}_Q$, see \cite{BMRRT}. Then $\mathcal{T}=\mathcal{C}_Q$ satisfies Setup \ref{setup} and it is $2$-Calabi-Yau, see \cite{K} and \cite{K2} for more details. Here we focus on the case when $\mathcal{T}$ is the cluster category of Dynkin type $A_n$, \textbf{i.e.} when $\mathcal{T}=\mathcal{C}_{A_n}$ for
\begin{align*}
Q=A_n: \,\, \stackrel{1}{\bullet}\longleftarrow \stackrel{2}{\bullet}\longleftarrow \cdots \longleftarrow \stackrel{n-1}{\bullet}\longleftarrow \stackrel{n}{\bullet}.
\end{align*}
We study $\mathcal{T}$ through a geometric realisation of it. Let $P$ be the regular polygon with $n+3$ vertices. The following can be proved using the results in \cite{CCS}.

\begin{enumerate}[itemsep=2ex, label=(\Roman*)]
\item\label{cluster1} There is a bijection
\begin{align*}
\{\text{diagonals in $P$ between non-neighbouring vertices} \}\leftrightarrow \text{Ind }\mathcal{T}.
\end{align*}
We identify $\text{Ind }\mathcal{T}$ and the diagonals of $P$, so given an indecomposable $x\in\mathcal{T}$ it makes sense to write $x=\{x_{0},x_{1}\}$, for $x_{0},x_{1}$ its endpoints as a diagonal in $P$. 
\item \label{cluster2} Let the diagonals $\mathfrak{a}, \mathfrak{c}$ correspond respectively to the indecomposables $a,c$ under the bijection from \ref{cluster1}. Then 
\begin{align*}
\dim_{\C} (\Ext^1(a,c))= \dim_{\C}  (\Hom (a,\Sigma c))=
\begin{cases}
1 & \text{if } \mathfrak{a},\mathfrak{c} \text{ cross}, \\ 0 & \text{otherwise,} 
\end{cases}
\end{align*}
where we say that two diagonals \textit{cross} if they intersect in the interior of $P$ (so excluding the endpoints).
\item If $a\in \text{Ind }\mathcal{T}$ corresponds to the diagonal $\mathfrak{a}=\{a_{0}, a_{1}\}$, then $\Sigma a$ corresponds to the diagonal $\{a_{0}^-,a_{1}^- \}$ obtained by moving the endpoints of $\mathfrak{a}$ by one clockwise step, see Figure \ref{fig:sigma_a}.
     \begin{figure}
  \centering
    \begin{tikzpicture}[scale=2]
      \draw (0,0) circle (1cm); 
      
     \draw (110:0.97cm) -- (110:1.03cm);
     \draw (90:0.97cm) -- (90:1.03cm);
     \draw (70:0.97cm) -- (70:1.03cm);
     \draw (50:0.97cm) -- (50:1.03cm);
     \draw (30:0.97cm) -- (30:1.03cm);
     \draw (10:0.97cm) -- (10:1.03cm);
     \draw (-49:0.97cm) -- (-49:1.03cm);
     \draw (-69:0.97cm) -- (-69:1.03cm);
     \draw (-89:0.97cm) -- (-89:1.03cm);
     \draw (-109:0.97cm) -- (-109:1.03cm);
     \draw (-129:0.97cm) -- (-129:1.03cm);
     \draw (-149:0.97cm) -- (-149:1.03cm);
     \draw (-169:0.97cm) -- (-169:1.03cm);
     \draw (-189:0.97cm) -- (-189:1.03cm);
      \draw (-9:0.97cm) -- (-9:1.03cm);
      \draw (-9:1.13cm) node{$\scriptstyle a_{1}$};
      
      \draw (150:0.97cm) -- (150:1.03cm);
      \draw (150:1.13cm) node{$\scriptstyle a_{0}$};
      
      \draw (-29:0.97cm) -- (-29:1.03);
      \draw (-29:1.18cm) node{$\scriptstyle a_{1}^{-}$};

      \draw (130:0.97cm) -- (130:1.03cm); 
      \draw (130:1.18cm) node{$\scriptstyle a_{0}^{-}$};
      
      \draw (150:1cm) -- (-9:1cm);
      \draw (-29:1cm) -- (130:1cm);
      \draw[very thick, orange, ->] ([shift=(150:1.3cm)]0,0) arc (150:135:1.4cm);
      \draw[very thick, orange, ->] ([shift=(-11:1.3cm)]0,0) arc (-11:-26:1.4cm);
     
    \end{tikzpicture}
    \caption{$\mathfrak{a}=\{a_{0}, a_{1}\}$ and $\Sigma \mathfrak{a}=\{a_{0}^-,a_{1}^- \}$ .}
    \label{fig:sigma_a}
\end{figure}
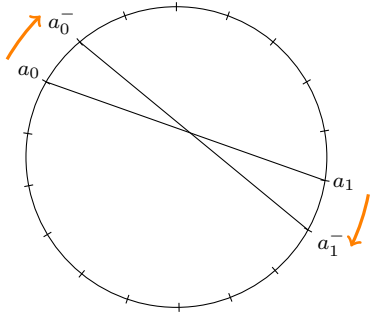

\item\label{cluster4} In \ref{cluster2}, suppose $\mathfrak{a},\mathfrak{c}$ cross, so $\dim_{\C}  (\Hom (a,\Sigma c))=\dim_{\C}  (\Hom (c,\Sigma a))=1$.
Then we can complete the non-zero morphisms $c\rightarrow \Sigma a$ and $a\rightarrow\Sigma c$ to obtain the two triangles
\begin{align*}
a\rightarrow b_1 \oplus b_2\rightarrow c\rightarrow \Sigma a\\
c\rightarrow s_1 \oplus s_2\rightarrow a\rightarrow \Sigma c,
\end{align*}
where $b_1,b_2,s_{1},s_{2}$ are the indecomposables corresponding to the diagonals $\mathfrak{b}_{1},\mathfrak{b}_{2},\mathfrak{s}_{1},\mathfrak{s}_{2}$ respectively in Figure \ref{fig:figtriangle}.
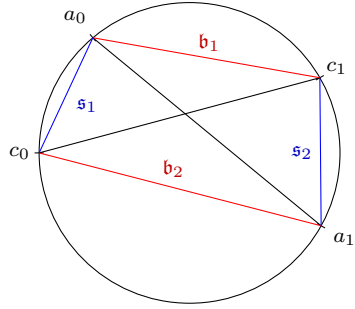
\begin{figure}
  \centering
    \begin{tikzpicture}[scale=2]
      \draw (0,0) circle (1cm);

      \draw (30:0.97cm) -- (30:1.03cm);
      \draw (30:1.13cm) node{$\scriptstyle c_{1}$};
      
      \draw (180:0.97cm) -- (180:1.03cm);
      \draw (180:1.13cm) node{$\scriptstyle c_{0}$};
      
      \draw (-29:0.97cm) -- (-29:1.03);
      \draw (-29:1.18cm) node{$\scriptstyle a_{1}$};

      \draw (130:0.97cm) -- (130:1.03cm); 
      \draw (130:1.18cm) node{$\scriptstyle a_{0}$};
      
      \draw (180:1cm) -- (30:1cm);
      \draw (-29:1cm) -- (130:1cm);
     
     \draw [red] (-29:1cm) -- (180:1cm);
     \draw [red] (30:1cm) -- (130:1cm);
     \draw [blue] (130:1cm) -- (180:1cm);
     \draw [blue] (30:1cm) -- (-29:1cm);
     
     \draw [color=red!75!black] (80:0.75cm) node{$\scriptstyle \mathfrak{b}_{1}$};
     \draw [color=red!75!black] (-136:0.15cm) node{$\scriptstyle \mathfrak{b}_{2}$};
     \draw [color=blue!75!black] (155:0.75cm) node{$\scriptstyle \mathfrak{s}_{1}$};
      \draw [color=blue!75!black] (0:0.75cm) node{$\scriptstyle \mathfrak{s}_{2}$};
    \end{tikzpicture}
    \caption{There are triangles $a\rightarrow b_{1}\oplus b_{2}\rightarrow c\rightarrow$ and $c\rightarrow s_1 \oplus s_2\rightarrow a\rightarrow \Sigma c$ in $\mathcal{C}_{A_n}$.}
    \label{fig:figtriangle}
\end{figure}
If $s_i$ or $b_i$ corresponds to an edge of $P$, then it is zero in $\mathcal{T}$.
\item\label{cluster5}
Of the triangles from \ref{cluster4}, the Auslander-Reiten triangles are exactly those in which either $a=\Sigma c$ or $c=\Sigma a$. For example, if $a=\Sigma c$, then we have an Auslander-Reiten triangle and a trivial triangle, respectively
\begin{align*}
\Sigma c\rightarrow b_1 \oplus b_2\rightarrow c\rightarrow \Sigma^{2} c \text{ and }
c\rightarrow 0 \rightarrow \Sigma c\xrightarrow{\cong} \Sigma c.
\end{align*}
Note that in this case $\mathfrak{s}_{1},\mathfrak{s}_{2}$ are edges of $P$ and hence zero in $\mathcal{T}$.
\item\label{cluster6}
Labelling the vertices of $P$ from $0$ to $n+2$ anticlockwise and using \ref{cluster1}-\ref{cluster5}, the Auslander-Reiten quiver of $\mathcal{T}$ is as shown in Figure \ref{fig:diagonals}. Note that this can be drawn on a M\"{o}bius strip.
For a diagonal $\mathfrak{a}$, we have that $\Sigma \mathfrak{a}$ is placed in the same row and to the left of $\mathfrak{a}$ and the Auslander-Reiten triangle $\Sigma a\rightarrow s_1\oplus s_2\rightarrow a\rightarrow \Sigma^2 a$ corresponds to the mesh:
\begin{align*}
    \xymatrix@R=1em @C=1em {
    &\mathfrak{s}_1\ar[rd]&\\
    \Sigma \mathfrak{a}\ar[ru]\ar[rd]& & \mathfrak{a}.\\
    &\mathfrak{s}_2 \ar[ru]&
    }
\end{align*}
\begin{figure}
  \centering
    \begin{tikzpicture}[scale=1]
\draw (0,0)  node{$\scriptstyle \{0,2\}$};
\draw (1,1)  node{$\scriptstyle \{0,3\}$};
\draw (2,2)  node{$\scriptstyle \bullet$};
\draw (3,3)  node{$\scriptstyle \{0,n\}$};
\draw (4,4)  node{$\scriptstyle \{0,n+1\}$};

\draw (2,0)  node{$\scriptstyle \{1,3\}$};
\draw (3,1)  node{$\scriptstyle \{1,4\}$};
\draw (5,1)  node{$\scriptstyle \bullet$};
\draw (6,0)  node{$\scriptstyle \bullet$};

\draw (5,3)  node{$\scriptstyle \{1,n+1\}$};
\draw (6,2)  node{$\scriptstyle \bullet$};
\draw (7,1)  node{$\scriptstyle \{n-2,n+1\}$};
\draw (8,0)  node{$\scriptstyle \{n-1,n+1\}$};

\draw [->] (0.2,0.2)--(0.8,0.8);
\draw [->] (1.2,1.2)--(1.8,1.8);
\draw [dashed] (2.2,2.2)--(2.8,2.8);
\draw [->] (3.2,3.2)--(3.8,3.8);
\draw [->](4.2,3.8)--(4.8,3.2);
\draw[dashed](5.2,2.8)--(5.8,2.2);
\draw [->](6.2,1.8)--(6.8,1.2);
\draw [->](7.2,0.8)--(7.8,0.2);

\draw[very thick, dotted](3.7,1)--(4.3,1);
\draw[very thick, dotted](3.7,0)--(4.3,0);

\draw [->] (1.2,0.8)--(1.8,0.2);
\draw [->] (2.2,0.2)--(2.8,0.8);
\draw [->] (2.2,1.8)--(2.8,1.2);
\draw [->] (5.2,1.2)--(5.8,1.8);
\draw [->] (5.2,0.8)--(5.8,0.2);
\draw [->] (6.2,0.2)--(6.8,0.8);

\draw (6,4)  node{$\scriptstyle \{1,n+2\}$};
\draw (7,3)  node{$\scriptstyle \{2,n+2\}$};
\draw (8,4)  node{$\scriptstyle \{0,2\}$};
\draw (9,3)  node{$\scriptstyle \{0,3\}$};
\draw (10,2)  node{$\scriptstyle \bullet$};
\draw (8,2)  node{$\scriptstyle \bullet$};
\draw (9,1)  node{$\scriptstyle \{n-1,n+2\}$};
\draw (10,0)  node{$\scriptstyle \{n,n+2\}$};
\draw (11,1)  node{$\scriptstyle \{0,n\}$};
\draw (12,0)  node{$\scriptstyle \{0,n+1\}$};

\draw [->] (5.2,3.2)--(5.8,3.8);
\draw [->] (6.2,3.8)--(6.8,3.2);
\draw [->] (7.2,3.2)--(7.8,3.8);
\draw [dashed] (7.2,2.8)--(7.8,2.2);
\draw[->] (8.2,1.8)--(8.8,1.2);
\draw[->] (9.2,0.8)--(9.8,0.2);
\draw[->] (7.2,1.2)--(7.8,1.8);
\draw[->] (8.2,0.2)--(8.8,0.8);

\draw[->] (10.2,0.2)--(10.8,0.8);

\draw[->](8.2,3.8)--(8.8,3.2);
\draw[dashed](9.2,2.8)--(9.8,2.2);
\draw[->](10.2,1.8)--(10.8,1.2);
\draw[->](9.2,1.2)--(9.8,1.8);
\draw[->] (11.2,0.8)--(11.8,0.2);

  \end{tikzpicture}
 \caption{Auslander-Reiten quiver of $\mathcal{C}_{A_n}$.}
    \label{fig:diagonals}
\end{figure}
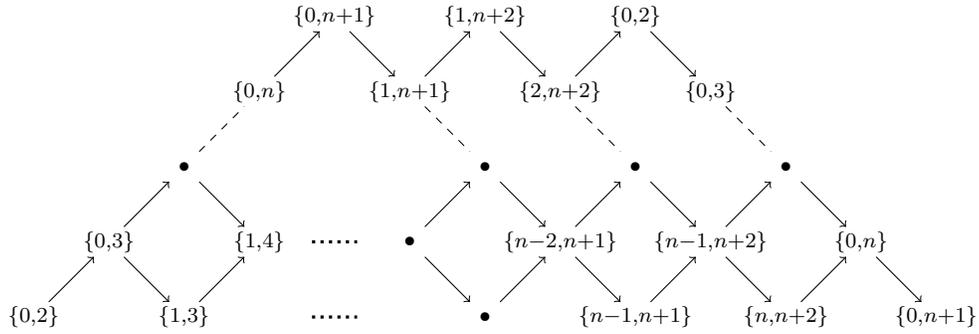

\item\label{cluster7}
We can define a cyclic order on the vertices of $P$ as follows. Given three vertices $u,v,w$ of $P$, we write $u<v<w$ if they appear in the order $u,v,w$ when going through the vertices of $P$ in the positive direction. Moreover, if we choose two distinct vertices $u$ and $v$, we can consider the interval of vertices $[u,v]$ and in this ``$<$'' is a total order.
\item\label{cluster8}
Let $x=\{x_{0},x_{1}\}\in \text{Ind }\mathcal{T}$. Then, by \cite[lemma\ 2.4.2]{IT}, we have that $y=\{y_{0},y_{1}\}\in \text{Ind }\mathcal{T}$ is such that $\Hom(x,y)\neq 0$ if and only if $y$ has one endpoint in each of the intervals $[x_0,x_1^{--}]$ and $[x_1,x_0^{--}]$, \textbf{i.e.} the blue arcs in Figure \ref{fig:fig_morphisms_from_x}.

\noindent Moreover, for such a $y$, the indecomposables $s=\{s_{0},s_{1}\}$ such that the morphism $x\rightarrow y$ factors through $s$ are exactly those having one endpoint in each of the intervals $[x_0,y_0]$ and $[x_1,y_1]$, \textbf{i.e.} the two red arcs in Figure \ref{fig:fig_morphisms_from_x}.
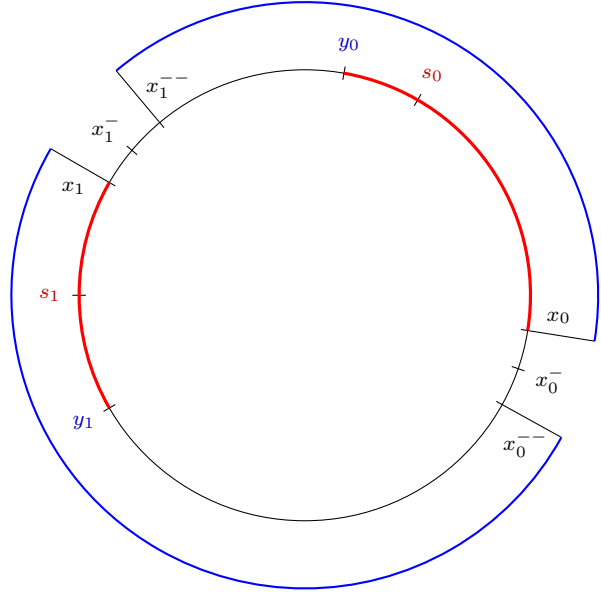
\begin{figure}
\centering
\begin{tikzpicture}[scale=3]
   \draw (0,0) circle (1cm);
   
   \draw (-9:0.97cm) -- (-9:1.3cm);
   \draw (-5:1.13cm) node{$\scriptstyle x_{0}$};
   \draw (150:0.97cm) -- (150:1.3cm);
   \draw (155:1.13cm) node{$\scriptstyle x_{1}$};
   
   \draw (-19:0.97cm) -- (-19:1.03);
   \draw (-19:1.15cm) node{$\scriptstyle x_{0}^{-}$}; 
   \draw (140:0.97cm) -- (140:1.03cm); 
   \draw (140:1.15cm) node{$\scriptstyle x_{1}^{-}$};
   \draw (-29:0.97cm) -- (-29:1.3);
   \draw (-34:1.18cm) node{$\scriptstyle x_{0}^{--}$};
   \draw (130:0.97cm) -- (130:1.3cm); 
   \draw (123:1.11cm) node{$\scriptstyle x_{1}^{--}$};
   
   \draw (80:0.97cm) -- (80:1.03cm);
   \draw [color=blue!75!black] (80:1.13cm) node{$\scriptstyle y_{0}$};
   \draw (-150:0.97cm) -- (-150:1.03cm);
   \draw [color=blue!75!black] (-150:1.13cm) node{$\scriptstyle y_{1}$};
   
   \draw (60:0.97cm) -- (60:1.03);
   \draw [color=red!75!black] (60:1.13cm) node{$\scriptstyle s_{0}$};
   \draw (180:0.97cm) -- (180:1.03);
   \draw [color=red!75!black] (180:1.13cm) node{$\scriptstyle s_{1}$}; 
   
   \draw[very thick, red] ([shift=(80:1cm)]0,0) arc (80:-9:1cm);
   \draw[very thick, red] ([shift=(-210:1cm)]0,0) arc (-210:-150:1cm);   
   
  
   \draw[thick, blue] ([shift=(130:1.3cm)]0,0) arc (130:-9:1.3cm);
   \draw[thick, blue] ([shift=(-210:1.3cm)]0,0) arc (-210:-29:1.3cm);
\end{tikzpicture} 
\caption{There is a non-zero morphism $x=\{ x_0,x_1 \}\rightarrow \{ y_0,y_1 \}=y$ if and only if $y$ has one endpoint below each blue arc. Moreover, $x\rightarrow y$ factors through $s=\{ s_0,s_1 \}$ if and only if $s$ has an endpoint below each red arc.}
\label{fig:fig_morphisms_from_x}
\end{figure}

\item
\label{cluster9}
Let $x=\{ x_{0},x_{1} \}\in\text{Ind }\mathcal{T}$. Then, by \cite[lemma\ 2.4.2]{IT}, we have that $z=\{ z_{0},z_{1} \}\in\text{Ind }\mathcal{T}$ is such that $\Hom(z,x)\neq 0$ if and only if $z$ has one endpoint in each of the intervals $[x_0^{++},x_1]$ and $[x_1^{++}, x_0]$, \textbf{i.e.} the two green arcs in Figure \ref{fig:fig_morphisms_to_x}.

\noindent Moreover, for such a $z$, the indecomposables $s=\{ s_0,s_1 \}$ such that the morphism $z\rightarrow x$ factors through $s$ are exactly those having one endpoint in each of the intervals $[z_0,x_1]$ and $[z_1,x_0]$, \textbf{i.e.} the two red arcs in Figure \ref{fig:fig_morphisms_to_x}.
\end{enumerate}

\begin{figure}
\centering
\begin{tikzpicture}[scale=3]
   \draw (0,0) circle (1cm);
   
   \draw (-9:0.97cm) -- (-9:1.3cm);
   \draw (-13:1.13cm) node{$\scriptstyle x_{0}$};
   \draw (150:0.97cm) -- (150:1.3cm);
   \draw (146:1.13cm) node{$\scriptstyle x_{1}$};
   
   \draw (1:0.97cm) -- (1:1.03);
   \draw (1:1.15cm) node{$\scriptstyle x_{0}^{+}$}; 
   \draw (160:0.97cm) -- (160:1.03cm); 
   \draw (160:1.15cm) node{$\scriptstyle x_{1}^{+}$};
   \draw (11:0.97cm) -- (11:1.3);
   \draw (16:1.15cm) node{$\scriptstyle x_{0}^{++}$};
   \draw (170:0.97cm) -- (170:1.3cm); 
   \draw (176:1.11cm) node{$\scriptstyle x_{1}^{++}$};
   
   \draw (80:0.97cm) -- (80:1.03cm);
   \draw [color=Green!75!black] (80:1.13cm) node{$\scriptstyle z_{0}$};
   \draw (-150:0.97cm) -- (-150:1.03cm);
   \draw [color=Green!75!black] (-150:1.13cm) node{$\scriptstyle z_{1}$};

  
   \draw[thick, Green] ([shift=(150:1.3cm)]0,0) arc (150:11:1.3cm);
   \draw[thick, Green] ([shift=(170:1.3cm)]0,0) arc (170:351:1.3cm);
   
   \draw (-60:0.97cm) -- (-60:1.03);
   \draw [color=red!75!black] (-60:1.13cm) node{$\scriptstyle s_{1}$};
   \draw (120:0.97cm) -- (120:1.03);
   \draw [color=red!75!black] (120:1.13cm) node{$\scriptstyle s_{0}$}; 
   
   \draw[very thick, red] ([shift=(-9:1cm)]0,0) arc (-9:-150:1cm);
   \draw[very thick, red] ([shift=(80:1cm)]0,0) arc (80:150:1cm); 
\end{tikzpicture} 
\caption{There is a non-zero morphism $z=\{ z_0,z_1 \}\rightarrow\{ x_0,x_1 \}=x$ if and only if $z$ has one endpoint below each green arc. Moreover $z\rightarrow x$ factors through $s=\{ s_0,s_1 \}$ if and only if $s$ has an endpoint below each red arc.}
\label{fig:fig_morphisms_to_x}
\end{figure}
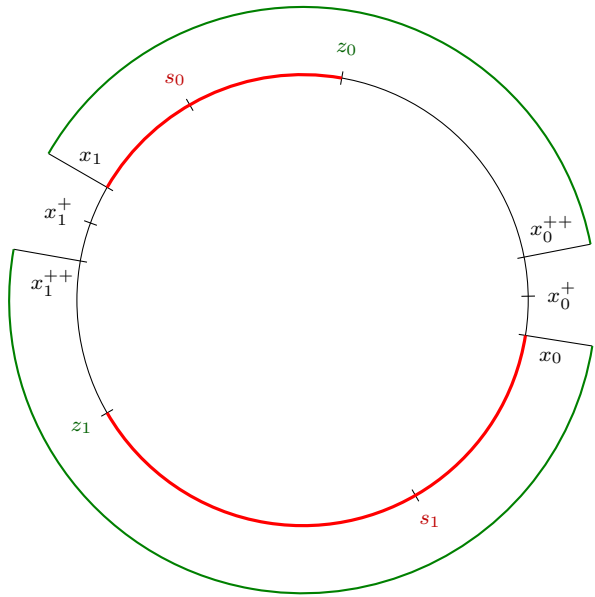

\subsection{Subcategories arising from Ptolemy diagrams}
The additive subcategories of $\mathcal{T}$ closed under extensions are precisely those arising from \textit{Ptolemy diagrams} in our regular $(n+3)$-gon $P$. The Ptolemy condition was first introduced in \cite{Ng} for the geometric representation of the cluster category of Dynkin type $A_\infty$ and later studied for the finite case in \cite{HJR}.
\begin{defn}
[{\cite[definition\ 2.1]{HJR}}]
A \textit{Ptolemy diagram} is a set $S$ of diagonals of a finite polygon such that if the set contains crossing diagonals $a$ and $b$, then it also contains all the diagonals connecting the endpoints of $a$ and $b$.
\end{defn}

Note that if we take $S$ to be the empty set, then this is a Ptolemy diagram, called an \textit{empty cell}. The set of all diagonals in a given polygon is also a Ptolemy diagram, called a \textit{clique}.

\begin{remark}[{\cite[theorem\ A (ii)]{HJR}}]
\label{thm_ptolemy}
Each Ptolemy diagram can be obtained by gluing empty cells and cliques.
\end{remark}

So, for our polygon $P$, a Ptolemy diagram is constructed by first choosing a set of pairwise non-crossing diagonals, called \textit{dissecting diagonals}, that divide $P$ in \textit{cells}, and then deciding whether each cell is empty or a clique.

\begin{exmp}
For example, in Figure \ref{fig:fig_ptolemy_eg} we have chosen the three green diagonals to be the dissecting diagonals and, going left to right, the first and third cells are empty and the second and fourth are cliques.
\end{exmp}
\begin{figure}
  \centering
    \begin{tikzpicture}[scale=3]
      \draw (0,0) circle (1cm); 
     \draw (110:0.97cm) -- (110:1.03cm);
     \draw (90:0.97cm) -- (90:1.03cm);
     \draw (70:0.97cm) -- (70:1.03cm);
     \draw (50:0.97cm) -- (50:1.03cm);
     \draw (30:0.97cm) -- (30:1.03cm);
     \draw (10:0.97cm) -- (10:1.03cm);
     \draw (-50:0.97cm) -- (-50:1.03cm);
     \draw (-70:0.97cm) -- (-70:1.03cm);
     \draw (-90:0.97cm) -- (-90:1.03cm);
     \draw (-110:0.97cm) -- (-110:1.03cm);
     \draw (-130:0.97cm) -- (-130:1.03cm);
     \draw (-150:0.97cm) -- (-150:1.03cm);
     \draw (-170:0.97cm) -- (-170:1.03cm);
     \draw (-190:0.97cm) -- (-190:1.03cm);
     \draw (-10:0.97cm) -- (-10:1.03cm);
     \draw (-30:0.97cm) -- (-30:1.03cm);
     \draw (150:0.97cm) -- (150:1.03cm);
     \draw (130:0.97cm) -- (130:1.03cm);
     
      \draw[thick, Green] (110:1cm) -- (-70:1cm);
      \draw[thick, Green] (-70:1cm) -- (50:1cm);
      \draw[thick, Green] (130:1cm) -- (-110:1cm);
      
     \draw[blue] (130:1cm) -- (-70:1cm);
     \draw[blue] (130:1cm) -- (-90:1cm);
     \draw[blue] (-90:1cm) -- (110:1cm);
     \draw[blue] (-110:1cm) -- (110:1cm);
     \draw[blue] (-110:1cm) -- (-70:1cm);

     \draw[blue] (-50:1cm) -- (30:1cm);
     \draw[blue] (-30:1cm) -- (30:1cm);
     \draw[blue] (-10:1cm) -- (30:1cm);
     \draw[blue] (-70:1cm) -- (30:1cm);
     
     \draw[blue] (50:1cm) -- (10:1cm);
     \draw[blue] (50:1cm) -- (-10:1cm);
     \draw[blue] (50:1cm) -- (-30:1cm);
     \draw[blue] (50:1cm) -- (-50:1cm);
     
     \draw[blue] (10:1cm) -- (-30:1cm);
     \draw[blue] (10:1cm) -- (-50:1cm);
     \draw[blue] (10:1cm) -- (-70:1cm);
     
     \draw[blue] (-10:1cm) -- (-50:1cm);
     \draw[blue] (-10:1cm) -- (-70:1cm);
     
     \draw[blue] (-30:1cm) -- (-70:1cm);
\end{tikzpicture}
    \caption{Example of a Ptolemy diagram.}
  \label{fig:fig_ptolemy_eg}
\end{figure}
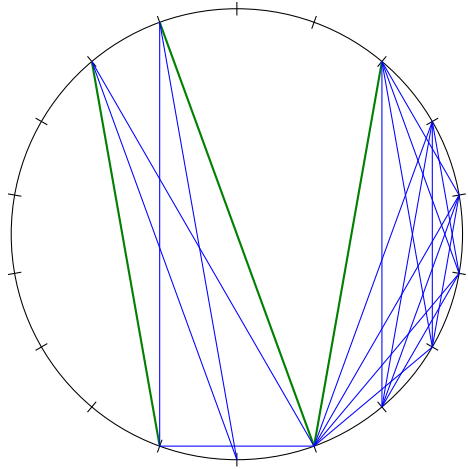

From now on let $\mathcal{C}$ be a subcategory of $\mathcal{T}$ corresponding to a Ptolemy diagram of $P$.
The following result is the reason why this choice satisfies Setup \ref{setup}.

\begin{proposition}\label{thm_ptolemy_ext}
Ptolemy diagrams of $P$ correspond to the additive subcategories of $\mathcal{T}$ closed under extensions.
\end{proposition}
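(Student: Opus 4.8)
The plan is to use the bijection of \ref{cluster1} to translate the statement into combinatorics of diagonals, and then read off both implications from the explicit extension data in \ref{cluster2} and \ref{cluster4}. Since an additive subcategory of $\mathcal{T}$ closed under summands and isomorphisms is determined by its set of indecomposables, \ref{cluster1} identifies such a subcategory $\mathcal{C}$ with a set $S$ of diagonals of $P$; the content of the proposition is then that $\mathcal{C}$ is closed under extensions precisely when $S$ is a Ptolemy diagram.

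First I would prove the easy implication: if $\mathcal{C}$ is closed under extensions, then $S$ is Ptolemy. Suppose $\mathfrak{a},\mathfrak{c}\in S$ are crossing diagonals, corresponding to indecomposables $a,c\in\mathcal{C}$. By \ref{cluster2} we have $\Ext^1(a,c)\neq 0$, so the two triangles of \ref{cluster4}, namely $a\rightarrow b_1\oplus b_2\rightarrow c\rightarrow\Sigma a$ and $c\rightarrow s_1\oplus s_2\rightarrow a\rightarrow\Sigma c$, are non-split. As $a,c\in\mathcal{C}$ and $\mathcal{C}$ is closed under extensions, both middle terms lie in $\mathcal{C}$, and since $\mathcal{C}$ is closed under summands each nonzero $b_i,s_i$ lies in $\mathcal{C}$. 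By Figure \ref{fig:figtriangle} the diagonals $\mathfrak{b}_1,\mathfrak{b}_2,\mathfrak{s}_1,\mathfrak{s}_2$ are exactly the four segments joining the endpoints of $\mathfrak{a}$ and $\mathfrak{c}$; any of these that is an edge of $P$ gives the zero object and is not a diagonal, so imposes no condition. Hence $S$ contains every diagonal connecting the endpoints of $\mathfrak{a}$ and $\mathfrak{c}$, which is precisely the Ptolemy condition.

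For the converse I would assume $S$ is Ptolemy and show $\mathcal{C}$ is closed under extensions. The core computation is for indecomposable end terms: given indecomposable $u,w\in\mathcal{C}$ and a triangle $u\rightarrow V\rightarrow w\rightarrow\Sigma u$, if $\mathfrak{u},\mathfrak{w}$ do not cross then $\Ext^1(u,w)=0$ by \ref{cluster2}, the triangle splits, and $V\cong u\oplus w\in\mathcal{C}$; if they cross then $\dim_{\C}\Ext^1(u,w)=1$ by \ref{cluster2}, so every non-split triangle is, up to rescaling the connecting morphism, the one from \ref{cluster4} with middle term $s_1\oplus s_2$, and $s_1,s_2\in\mathcal{C}$ because $S$ is Ptolemy. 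In either case $V\in\mathcal{C}$.

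The hard part is upgrading this to decomposable end terms, i.e. showing that the indecomposable summands of the middle term of an arbitrary extension of objects of $\mathcal{C}$ are again forced into $S$. I would attempt to reduce to the indecomposable case by d\'evissage: writing one end term as $W'\oplus W''$ with $W''$ indecomposable and applying the octahedral axiom to $W''\hookrightarrow W'\oplus W''\rightarrow\Sigma u$ produces a triangle $V'\rightarrow V\rightarrow W''\rightarrow\Sigma V'$ in which $V'$ is the middle term of a shorter extension. The subtlety, and the main obstacle, is that $V'$ is itself typically decomposable, so the naive induction on the number of summands does not close; controlling the indecomposable summands of these intermediate middle terms is exactly where the explicit geometry of $\mathcal{T}$ is needed. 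I expect the cleanest route is to invoke the classification of \cite{HJR} for this reduction step, with the indecomposable computation above supplying the geometric heart of the correspondence.
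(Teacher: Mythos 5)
Your first direction (extension closed $\Rightarrow$ Ptolemy) is correct and complete, and your computation for triangles with indecomposable end terms is also correct. But the proposal has a genuine gap, and it is exactly the one you flag yourself: nothing in your argument closes the reduction from arbitrary end terms to indecomposable ones. Your diagnosis of the obstruction is accurate --- the octahedral d\'evissage expresses the middle term $B$ of a triangle $A_1\oplus A_2\to B\to C\to\Sigma(A_1\oplus A_2)$ as an extension of an intermediate object $B_1$ (itself a middle term) by $A_2$, and since $B_1$ is typically decomposable the induction never terminates; there is no general principle in a Krull--Schmidt triangulated category that lets one test extension closure on indecomposable end terms. Falling back on ``invoke the classification of \cite{HJR}'' does not repair this: \cite{HJR}, theorem A(i) combined with proposition 2.3, \emph{is} the proposition being proved --- indeed that two-line citation is the paper's entire proof --- so at that point your direct computation becomes redundant rather than completed. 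As a self-contained argument, the proposal is therefore incomplete.

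The clean way to close the gap without quoting the classification is to stop checking triangles and instead exhibit $\mathcal{C}$ as a perpendicular subcategory, for which extension closure is automatic. For a set $S$ of diagonals let $S^{\perp}$ denote the set of diagonals of $P$ crossing no member of $S$. Using the cell structure of Ptolemy diagrams (Remark \ref{thm_ptolemy}), one checks that $S$ Ptolemy implies $(S^{\perp})^{\perp}=S$: if $\mathfrak{d}\notin S$, then either $\mathfrak{d}$ crosses a dissecting diagonal of $S$, and that dissecting diagonal lies in $S^{\perp}$; or $\mathfrak{d}$ lies inside a single cell, which must then be an empty cell (a clique contains all its diagonals), and any diagonal of that empty cell crossing $\mathfrak{d}$ lies in $S^{\perp}$. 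By \ref{cluster1}, \ref{cluster2} and additivity of $\Ext^1$ over direct sums, the equality $(S^{\perp})^{\perp}=S$ says precisely that
\begin{align*}
\mathcal{C}=\text{add}(S)=\{\,X\in\mathcal{T}\mid \Ext^1(X,Z)=0 \text{ for all } Z\in\text{add}(S^{\perp})\,\}.
\end{align*}
Any subcategory of this form is closed under summands, and it is closed under extensions because applying $\Hom(-,\Sigma Z)$ to a triangle $A\to B\to C\to\Sigma A$ gives the exact sequence $\Hom(C,\Sigma Z)\to\Hom(B,\Sigma Z)\to\Hom(A,\Sigma Z)$, whose outer terms vanish --- no reduction to indecomposables is needed anywhere. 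Combined with your first direction, this yields a complete proof which is genuinely different from the paper's pure citation of \cite{HJR}; note that in this route your indecomposable extension computation via \ref{cluster4} is only needed for the easy direction.
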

\begin{proof}
This follows from \cite[theorem\ A(i) and proposition\ 2.3]{HJR}.
\end{proof}

\subsection{Minimal right almost split morphisms in $\mathcal{C}$ ending at Ext-projectives}\label{section35}
Note that since $\mathcal{C}\subseteq \mathcal{T}$ has finitely many indecomposables, it is functorially finite and we can apply Theorem \ref{prop_mras}. We first describe the indecomposable $\Ext$-projectives in $\mathcal{C}$. Then, given any indecomposable $\Ext$-projective in $\mathcal{C}$, we give a way to find a minimal right almost split morphism in $\mathcal{C}$ ending at it and the left-weak Auslander-Reiten triangle in $\mathcal{C}$ completing it.

\begin{proposition}\label{thm-extproj}
An indecomposable $c$ in $\mathcal{C}$ is $\Ext$-projective if and only if it is a dissecting diagonal.
\end{proposition}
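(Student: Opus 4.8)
The plan is to translate $\Ext$-projectivity into a purely combinatorial crossing condition and then argue geometrically using the cell decomposition of the Ptolemy diagram. By the definition of $\Ext$-projectivity, $c$ is $\Ext$-projective precisely when $\Ext^1(c,a)=\Hom(c,\Sigma a)=0$ for every indecomposable $a$ in $\mathcal{C}$. By \ref{cluster2} this vanishes for a given $a$ if and only if $\mathfrak{a}$ does not cross $\mathfrak{c}$. Hence the statement reduces to the combinatorial claim that $\mathfrak{c}$ is a dissecting diagonal of the Ptolemy diagram $S$ corresponding to $\mathcal{C}$ if and only if no diagonal of $S$ crosses $\mathfrak{c}$. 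Throughout I use that, by Remark \ref{thm_ptolemy} and the description following it, every diagonal of $S$ is either one of the pairwise non-crossing dissecting diagonals or else lies in the interior of a single cell which is a clique.

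For the direction that a dissecting diagonal is $\Ext$-projective, suppose $\mathfrak{c}$ is dissecting and let $\mathfrak{a}\in S$ be arbitrary. If $\mathfrak{a}$ is dissecting, then it does not cross $\mathfrak{c}$, since dissecting diagonals are pairwise non-crossing. Otherwise $\mathfrak{a}$ lies in the interior of a single cell. Since $\mathfrak{c}$ is dissecting, it bounds cells but its relative interior meets no cell interior; as $\mathfrak{a}$ is a chord contained in one cell, $\mathfrak{a}$ and $\mathfrak{c}$ cannot cross inside $P$. Either way $\mathfrak{a}$ does not cross $\mathfrak{c}$, so $\Ext^1(c,\mathcal{C})=0$ and $c$ is $\Ext$-projective.

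For the converse I argue by contraposition. If $\mathfrak{c}$ is not dissecting, it lies in the interior of a clique cell $R$, which therefore has at least four vertices. The endpoints $c_0,c_1$ of $\mathfrak{c}$ are non-adjacent in the cell polygon $R$, so they split the remaining vertices of $R$ into two arcs, each nonempty. Choosing a vertex $u$ in one arc and $v$ in the other, using the cyclic order of \ref{cluster7}, the chord $\{u,v\}$ joins two vertices of $R$ and crosses $\mathfrak{c}$; moreover $u$ and $v$ are separated by $c_0$ on one side and by $c_1$ on the other, so they are non-neighbouring in $P$ and $\{u,v\}$ is a genuine diagonal. Since $R$ is a clique, $\{u,v\}\in S$, whence $\Ext^1(c,\{u,v\})\neq 0$ and $c$ is not $\Ext$-projective.

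The main obstacle is the careful bookkeeping of the cell decomposition: one must justify that every diagonal of $\mathcal{C}$ is accounted for as either dissecting or interior-to-a-clique, and then faithfully convert the ``no crossing'' condition via \ref{cluster2}. The crossing arguments themselves are elementary convex-polygon facts, so once the cell structure is in place both implications follow quickly; the only point requiring a small verification is the non-adjacency of the endpoints $u,v$ in the converse, which secures that the constructed crossing chord genuinely lies in $\mathcal{C}$.
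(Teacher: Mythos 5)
Your proof is correct and follows essentially the same route as the paper: translate $\Ext$-vanishing into the non-crossing condition via \ref{cluster2}, observe that a dissecting diagonal crosses nothing in $\mathcal{C}$, and, conversely, for a non-dissecting diagonal pick vertices of its clique cell on either side of it and join them to produce a crossing diagonal in $\mathcal{C}$. The extra details you supply (that every diagonal of the Ptolemy diagram is either dissecting or interior to a clique cell, and that the constructed chord $\{u,v\}$ has non-neighbouring endpoints in $P$) are points the paper leaves implicit, but the argument is the same.
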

\begin{proof}
Suppose $c$ is a dissecting diagonal, so there is no diagonal in $\mathcal{C}$ crossing it. Hence for every $a\in \mathcal{C}$, we have $\dim_{\C}(\Ext^1(a,c))=0$ and $c$ is $\Ext$-projective.

Suppose now that $c$ is not a dissecting diagonal and let $P'$ be the clique it belongs to. Then there are vertices of $P'$ lying on both sides of $c$. Joining any two vertices of $P'$ lying on different sides of $c$, we obtain a diagonal $a'$ in $\mathcal{C}$ crossing $c$  and hence $\Ext^1(c, a')\neq 0$. So $c$ is not $\Ext$-projective.
\end{proof}

\begin{setup} \label{setup_thm1}
Let $\mathcal{C}$ correspond to a Ptolemy diagram and $c\in \mathcal{C}$ be $\Ext$-projective and indecomposable. Then $c$ is a dissecting diagonal by Proposition \ref{thm-extproj}. Let the vertices of the two cells bordered by $c$ be $v_1<v_2<\cdots<v_m$ and $c=\{v_i,v_j\}$. Set $v_0:=v_m$.

Choose $v_p$ maximal in $[v_i^+,v_{j}^-]$ such that $b_0:=\{v_i, v_p\} \in\mathcal{C}$
and $v_q$ maximal in $[v_j^+,v_i^-]$ such that $b_1:= \{ v_j, v_q \}\in \mathcal{C}$. An example is shown in Figure \ref{fig:ext_proj}.


\end{setup}

\begin{remark}
Note that the choice of $v_p$ depends on whether $c$ borders a clique or an empty cell in $[v_i, v_j]$. In the first case we have $v_p=v_{j-1}$ while in the second $v_p=v_{i+1}$. Note that in the case when $c$ borders an empty cell with $v_{i+1}=v_i^+$, then $b_0=\{v_i,v_i^+\}=0$. The vertex $v_q$ is determined in a similar way, looking at the interval $[v_j,v_i]$.
\end{remark}

\begin{proposition}\label{thm_mras}
In the situation of Setup \ref{setup_thm1}, let
\begin{align*}
\beta=(\beta_0,\beta_1): b_0\oplus b_1\rightarrow c,
\end{align*}
where $\beta_0,\, \beta_1$ are non-zero morphisms (unless $b_0$ or $b_1$ are zero). Then $\beta$ is a minimal right almost split morphism in $\mathcal{C}$ and
\begin{align}
x\xrightarrow{\xi=\begin{psmallmatrix}\xi_0\\\xi_1\end{psmallmatrix}} b_0\oplus b_1\xrightarrow{\beta=(\beta_0,\beta_1)} c\rightarrow \Sigma x \tag{$\dag$}
\end{align}
is a triangle in $\mathcal{T}$ with $x=\{ v_p,v_q \}$ indecomposable not in $\mathcal{C}$ and $\xi$ a $\mathcal{C}$-envelope of $x$. In other words, $(\dag)$ is a left-weak Auslander-Reiten triangle in $\mathcal{C}$.
\end{proposition}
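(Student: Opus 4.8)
The plan is to check that $\beta=(\beta_0,\beta_1)$ is minimal right almost split in $\mathcal{C}$ and that the triangle completing it is precisely $(\dag)$ with first term $x=\{v_p,v_q\}$; the remaining assertions then follow from Theorem~\ref{prop_mras}. First I would identify $(\dag)$ geometrically. In the cyclic order we have $v_i<v_p<v_j<v_q$, so $c=\{v_i,v_j\}$ and $x=\{v_p,v_q\}$ are the two crossing diagonals of the quadrilateral on the vertices $v_i,v_p,v_j,v_q$, while $b_0=\{v_i,v_p\}$ and $b_1=\{v_j,v_q\}$ are a pair of its opposite sides. Hence $(\dag)$ is exactly the triangle obtained in \ref{cluster4} by completing the non-zero morphism $c\to\Sigma x$, so it is distinguished, its middle term is $b_0\oplus b_1$, and $\beta_0,\beta_1$ are non-zero (each vanishing exactly when the corresponding side is an edge of $P$). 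Since $v_p$ and $v_q$ are separated by $v_i$ and by $v_j$, the diagonal $x$ joins non-neighbouring vertices and is therefore indecomposable; and as $x$ crosses the dissecting diagonal $c$, while no diagonal of $\mathcal{C}$ may cross $c$, we get $x\notin\mathcal{C}$.

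The core of the argument, and the step I expect to be the main obstacle, is that $\beta$ is right almost split. It is not a split epimorphism, since $(\dag)$ is non-split ($\gamma\neq 0$). For the factorisation property I would reduce to an indecomposable $c'\in\mathcal{C}$ together with a non-zero non-isomorphism $\gamma\colon c'\to c$: a componentwise argument handles decomposable $c'$, and the case $c'\cong c$ is vacuous because $\End c\cong k$ forces every non-isomorphism $c\to c$ to vanish. Writing $c=\{v_i,v_j\}$ and applying the description \ref{cluster9} of morphisms into $c$, such a $c'$ has one endpoint $c'_0\in[v_i^{++},v_j]$ and one $c'_1\in[v_j^{++},v_i]$, and the factorisation criterion there shows that $\gamma$ factors through $b_0$ whenever $c'_0\le v_p$ and through $b_1$ whenever $c'_1\le v_q$, the comparisons being taken in the total orders of \ref{cluster7} on the two intervals. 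It therefore remains to rule out the case $c'_0>v_p$ and $c'_1>v_q$, and this is exactly where the choices in Setup~\ref{setup_thm1} are used: if both endpoints were strictly interior to the two arcs cut out by $c$, then $c'$ would cross the dissecting diagonal $c$, which is impossible in $\mathcal{C}$; whereas if $c'_0=v_j$ with $c'_1>v_q$ (respectively $c'_1=v_i$ with $c'_0>v_p$), then $\{v_j,c'_1\}\in\mathcal{C}$ (respectively $\{v_i,c'_0\}\in\mathcal{C}$) contradicts the maximal choice of $v_q$ (respectively of $v_p$). The degenerate situations $b_0=0$ or $b_1=0$ are absorbed into the same maximality argument, so $\beta$ is right almost split.

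For right minimality I would invoke the standard fact, valid in a Krull--Schmidt category with split idempotents, that a morphism is right minimal if and only if its source has no non-zero direct summand on which it restricts to zero; the non-trivial direction follows from Fitting's lemma applied to any $\varphi$ with $\beta\varphi=\beta$. Since $b_0\not\cong b_1$ are indecomposable and $\beta_0,\beta_1$ are both non-zero, the only summands of $b_0\oplus b_1$ are $0,b_0,b_1,b_0\oplus b_1$, and $\beta$ is non-zero on each non-zero one; hence $\beta$ is right minimal. Finally, $c$ is $\Ext$-projective by Proposition~\ref{thm-extproj}, so Theorem~\ref{prop_mras}(a) applies to the minimal right almost split morphism $\beta$ and yields that $\xi$ is a $\mathcal{C}$-envelope of $x$ and that $(\dag)$ is a left-weak Auslander--Reiten triangle in $\mathcal{C}$; the identification of its first term with $\{v_p,v_q\}$ is supplied by the first step.
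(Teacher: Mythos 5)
Your overall strategy matches the paper's: identify $(\dag)$ with the triangle of \ref{cluster4} attached to the crossing pair $c=\{v_i,v_j\}$, $x=\{v_p,v_q\}$, prove that $\beta$ is right almost split via the factorisation criterion \ref{cluster9} together with the maximality of $v_p,v_q$ and the fact that no object of $\mathcal{C}$ may cross the dissecting diagonal $c$, and then quote Theorem \ref{prop_mras}. Your right-almost-split argument is correct; it is organised slightly differently from the paper's (the paper first observes that any indecomposable of $\mathcal{C}$ mapping non-trivially to $c$ must share an endpoint with $c$, whereas you run the crossing/maximality argument only in the residual case $c'_0>v_p$, $c'_1>v_q$), but the ingredients are identical.

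The right-minimality step, however, contains a genuine gap. The principle you invoke --- that $b_0\not\cong b_1$ indecomposable and $\beta_0,\beta_1\neq0$ force $(\beta_0,\beta_1)$ to be right minimal --- is false in general. The Krull--Schmidt criterion requires that $\beta$ vanish on no non-zero summand in \emph{any} internal decomposition of $b_0\oplus b_1$, not only on the two canonical ones; a summand isomorphic to $b_0$ can be embedded as the graph $\begin{psmallmatrix}1\\ g\end{psmallmatrix}\colon b_0\to b_0\oplus b_1$ of a morphism $g\colon b_0\to b_1$, and on it $\beta$ restricts to $\beta_0+\beta_1g$. Choosing any $g$ with $\beta_1g\neq0$ and setting $\beta_0:=-\beta_1g$ yields a non-right-minimal $\beta$ with both components non-zero and non-isomorphic indecomposable sources (for instance $b_0=P_2\subset b_1=P_1$ and $c=P_1/\soc P_1$ over the path algebra of $A_3$). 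So your assertion that ``$\beta$ is non-zero on each non-zero summand'' is precisely what must be proved, and it needs the geometric input that the paper supplies at exactly this point: $b_1=\{v_j,v_q\}$ does not cross $\Sigma^{-1}b_0=\{v_i^+,v_p^+\}$ (using $v_i<v_i^+<v_i^{++}<v_p^+<v_j<v_j^{++}\le v_q$), and likewise $b_0$ does not cross $\Sigma^{-1}b_1$, so that $\Hom(b_0,b_1)=\Hom(b_1,b_0)=0$ by \ref{cluster2}. Granting this, and since $\End(b_i)\cong\C$, every endomorphism $\varphi$ of $b_0\oplus b_1$ is diagonal with scalar entries, and $\beta\varphi=\beta$ forces $\varphi$ to be the identity; equivalently, every embedded copy of $b_0$ or $b_1$ is a canonical one, on which $\beta$ is non-zero. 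With this non-crossing computation inserted, your proof is complete and coincides with the paper's.
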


\begin{remark} In the example illustrated in Figure \ref{fig:ext_proj}, we have $x=\{v_4,v_2\}$. Note that $x$ crosses the dissecting diagonal $c$ and so it is not in $\mathcal{C}$.
\end{remark}
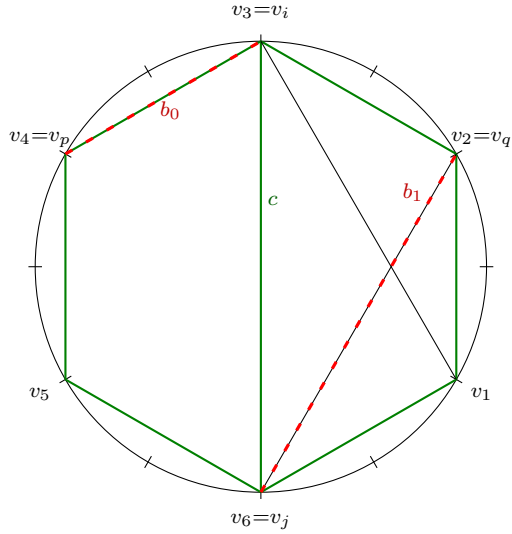
\begin{figure}
  \centering
    \begin{tikzpicture}[scale=3]
     \draw (0,0) circle (1cm); 
     \draw (0:0.97cm) -- (0:1.03cm);
     \draw (30:0.97cm) -- (30:1.03cm);
     \draw (60:0.97cm) -- (60:1.03cm);
     \draw (90:0.97cm) -- (90:1.03cm);
     \draw (120:0.97cm) -- (120:1.03cm);
     \draw (150:0.97cm) -- (150:1.03cm);
     \draw (180:0.97cm) -- (180:1.03cm);
     \draw (-30:0.97cm) -- (-30:1.03cm);
     \draw (-60:0.97cm) -- (-60:1.03cm);
     \draw (-90:0.97cm) -- (-90:1.03cm);
     \draw (-120:0.97cm) -- (-120:1.03cm);
     \draw (-150:0.97cm) -- (-150:1.03cm);
     
     \draw [Green, thick] (90:1cm) -- (-90:1cm);
     \draw  [Green, thick] (90:1cm) -- (30:1cm);
     \draw  [Green, thick] (30:1cm) -- (-30:1cm);
     \draw  [Green, thick] (-90:1cm) -- (-150:1cm);
     \draw  [Green, thick] (-150:1cm) -- (150:1cm);
     \draw [Green, thick] (150:1cm) -- (90:1cm);
     \draw [Green, thick] (-30:1cm) -- (-90:1cm);
     \draw [very thick, loosely dashed, red] (150:1cm) -- (90:1cm);
     \draw (90:1cm) -- (-30:1cm);
     \draw (-90:1cm) -- (30:1cm);
     \draw [very thick, loosely dashed, red] (-90:1cm) -- (30:1cm);
     
     \draw [color=Green!75!black](80:0.3cm) node{$\scriptstyle c$};
     \draw (90:1.13cm) node{$\scriptstyle v_{3}=v_i$};
     \draw (30:1.13cm) node{$\scriptstyle v_{2}=v_q$};
     \draw (-30:1.13cm) node{$\scriptstyle v_{1}$};
     \draw (150:1.13cm) node{$\scriptstyle v_{4}=v_p$};
     \draw (-150:1.13cm) node{$\scriptstyle v_{5}$};
     \draw (-90:1.13cm) node{$\scriptstyle v_6=v_j$};
     \draw [color=red!75!black](120:0.8cm) node{$\scriptstyle b_0$};
     \draw [color=red!75!black](26:0.75cm) node{$\scriptstyle b_1$};
     
\end{tikzpicture}
    \caption{Example of Setup \ref{setup_thm1} in a Ptolemy diagram of a $12$-gon. Green diagonals are dissecting diagonals, with $c=\{v_3,v_6\}$. On the left, $c$ borders an empty cell, so $v_p=v_{3+1}$ and on the right it borders a clique, so $v_q=v_{3-1}$. Then $b_0$ and $b_1$ are the red dashed diagonals.}
    \label{fig:ext_proj}
\end{figure}
\begin{proof}[Proof of Proposition \ref{thm_mras}]
Consider an indecomposable $d\in\mathcal{C}$. By \ref{cluster9}, $\Hom_{\mathcal{T}}(d,c)\neq 0$ if and only if $d$ has one endpoint in each of the intervals $[v_i^{++},v_j]$ and $[v_j^{++}, v_i]$. Since $d\in \mathcal{C}$ and $c$ is a dissecting diagonal, $d$ is not allowed to cross $c$. Hence $d=\{ v_i, t \}$ for $t\in [v_i^{++}, v_j]$ or $d=\{ v_j, s \}$ for $s\in [v_j^{++}, v_i]$. Note that, whenever they are non-zero, our choices of $b_0,\,b_1$ satisfy this condition and so $\dim_{\C}\Hom_{\mathcal{T}}(b_i,c)=1$.

We prove $\beta$ is right almost split in $\mathcal{C}$. Take a morphism $\gamma':c'\rightarrow c$ in $\mathcal{C}$ that is not a split epimorphism. If $\gamma'=0$ it clearly factorizes through $\beta$, so assume $\gamma'$ is non-zero and without loss of generality assume $c'$ is indecomposable. Note that $\gamma'$ being not a split epimorphism forces $c'\neq c$.
By the above, we have either $c'=\{ v_i, t \}$ for $t\in [v_i^{++}, v_{j-1}]$ or $c'=\{ v_j, s \}$ for $s\in [v_j^{++}, v_{i-1}]$.
In the first case, by maximality of $v_p$ in $[v_i^{++},v_j]$ such that $b_0=\{v_i, v_p\}\in\mathcal{C}$, we have $t\leq v_p<v_j$. Then, by \ref{cluster9}, $\gamma'$ factors through $\beta_0$. Similarly, in the second case we have $s\leq v_q<v_i$ and $\gamma'$ factors through $\beta_1$. Hence $\beta$ is right almost split.

We now show that $\beta$ is right minimal. Note that $b_1=\{ v_j,v_q \}$ and $\Sigma^{-1} b_0=\{v_i^+,v_p^+\}$ do not cross. In fact we have
\begin{align*}
v_i<v_i^+<v_i^{++}<v_p^+<v_j<v_j^{++}\leq v_q.
\end{align*}
Similarly,  $b_0$ and $\Sigma^{-1} b_1$ do not cross. Then any morphism
$\varphi:b_0\oplus b_1\rightarrow b_0\oplus b_1$ has the form
\begin{align*}
\varphi=\begin{pmatrix}
\alpha_0 1_{b_0} & 0\\ 0& \alpha_1 1_{b_1}
\end{pmatrix}:b_0\oplus b_1\rightarrow b_0\oplus b_1, 
\end{align*}
where $\alpha_0,\,\alpha_1\in\mathbb{C}$. If $\varphi$ is such that $\beta\circ \varphi=\beta$, then we must have $\alpha_0=\alpha_1=1$, so that $\varphi$ is an isomorphism. Hence $\beta$ is a minimal right almost split morphism in $\mathcal{C}$.

The rest of the proposition follows from \ref{cluster4} and Theorem \ref{prop_mras}. 
\end{proof}

\begin{remark}
In the situation of Proposition \ref{thm_mras}, by Theorem \ref{prop_mras}(b) we also have that if $\beta': b'\rightarrow c'$ is a minimal right almost split morphism in $\mathcal{C}$ with $c'$ $\Ext$-projective, then $c'\cong c$ if and only if $x'\cong x$, where $x'\xrightarrow{\xi'} b'\xrightarrow{\beta'} c'\rightarrow\Sigma x'$ is the triangle obtained by extending $\beta'$.
\end{remark}

We now apply Corollary \ref{coro_mutation} to this example.
\begin{remark}\label{remark_2cy}
Note that, by the dimension of $\Hom$ spaces over $\mathbb{C}$, we have that $\End(z)\cong \mathbb{C}$ for every indecomposable $z$ in $\mathcal{T}$.
Moreover, since $\mathcal{T}$ is $2$-Calabi-Yau, by Remark \ref{remark_2CY_Ext}, we have that $\Ext$-projective and $\Ext$-injective objects coincide in additive subcategories of $\mathcal{T}$.
\end{remark}

\begin{proposition}\label{thm_mutation_eg}
In the situation of Setup \ref{setup_thm1}, consider the triangle $(\dag):x\rightarrow b_0\oplus b_1\rightarrow c\rightarrow \Sigma x$ from Proposition \ref{thm_mras}. Let $\widetilde{\mathcal{C}}\subseteq \mathcal{T}$ be the additive subcategory with $\text{Ind }\widetilde{\mathcal{C}}=\text{Ind }(\mathcal{C})\setminus c$ and $\mathcal{C}':=\text{add }(\widetilde{\mathcal{C}}\cup x)$. Then $\mathcal{C}'\subseteq\mathcal{T}$ is closed under extensions if and only if $c$ borders two empty cells in the Ptolemy diagram corresponding to $\mathcal{C}$.
\end{proposition}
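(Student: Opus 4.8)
The plan is to turn the statement into a purely combinatorial crossing condition via Corollary \ref{coro_mutation}. Since $\mathcal{T}=\mathcal{C}_{A_n}$ is $2$-Calabi-Yau and $\End(x)\cong\End(c)\cong\C$ by Remark \ref{remark_2cy}, Corollary \ref{coro_mutation} applies and tells us that $\mathcal{C}'$ is closed under extensions if and only if $x\in I(\mathcal{C}')$, that is, $\Ext^1(d,x)=0$ for every $d\in\mathcal{C}'$. By property \ref{cluster2}, $\Ext^1(d,x)\neq 0$ exactly when the diagonal $d$ crosses $x=\{v_p,v_q\}$, and $x$ does not cross itself, so $x\in I(\mathcal{C}')$ if and only if no indecomposable $d\in\widetilde{\mathcal{C}}$ crosses $x$. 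Thus the whole proposition reduces to showing: no diagonal of $\widetilde{\mathcal{C}}$ crosses $x$ if and only if $c$ borders two empty cells.

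The key structural input is that, by the way Ptolemy diagrams are built, every diagonal of $\mathcal{C}$ is either a dissecting diagonal or an internal diagonal of a single clique cell; in particular no diagonal of $\mathcal{C}$ crosses any dissecting diagonal. Let $A$ and $B$ be the two cells bordered by $c$, with $v_p$ a vertex of $A$ and $v_q$ a vertex of $B$ (as in Setup \ref{setup_thm1}). I would first record the two arcs cut out by the chord $x$: writing the cyclic order as $v_i,\dots,v_p,\dots,v_j,\dots,v_q,\dots$, the diagonal $x$ separates $P$ into the arc through $v_j$ and the arc through $v_i$, so a diagonal crosses $x$ exactly when it has one endpoint in each arc.

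For the forward direction, suppose $c$ borders two empty cells and that some $d=\{s,t\}\in\widetilde{\mathcal{C}}$ crosses $x$, say with $s$ on the arc through $v_j$ and $t$ on the arc through $v_i$. Since $d$ cannot cross the dissecting diagonal $c$, both endpoints lie weakly on one side of $c$; intersecting this with the two arcs forces $s\in(v_p,v_j]$ and $t\in[v_i,v_p)$ on the side of $A$ (or the symmetric situation on the side of $B$). As $d$ crosses no dissecting diagonal, $s$ and $t$ must be vertices of the cell $A$ straddling $v_p$, so $d$ is a genuine internal diagonal of $A$ different from $c$; this is impossible when $A$ is empty. Hence no such $d$ exists and $\mathcal{C}'$ is closed. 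For the converse, if, say, $A$ is a clique with vertices $v_i=w_0<w_1<\dots<w_k=v_j$ having an internal diagonal (so $k\ge 3$), then $v_p=w_{k-1}$ and I would exhibit the internal diagonal $d=\{w_1,v_j\}\in\widetilde{\mathcal{C}}$: here $v_j$ lies on the arc through $v_j$ while $w_1\in(v_i,v_p)$ lies on the arc through $v_i$, so $d$ crosses $x$, giving $x\notin I(\mathcal{C}')$; the case where $B$ is a clique is symmetric with $d=\{v_i,u_1\}$.

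I expect the main obstacle to be the careful arc bookkeeping that pins down ``$s,t$ straddle $v_p$'' correctly, together with the degenerate cases: one must use the Remark after Setup \ref{setup_thm1} that $v_p$ is the first cell-vertex after $v_i$ when the cell is empty but the penultimate cell-vertex $w_{k-1}$ when it is a clique, and one must treat a triangular cell (which has no internal diagonals) as empty — in that case the construction $d=\{w_1,v_j\}$ degenerates to $c$ and produces no crossing diagonal, in agreement with the statement. Checking that nothing breaks when $b_0=0$ (i.e.\ $v_{i+1}=v_i^+$) is a routine boundary verification.
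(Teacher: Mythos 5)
Your proposal is correct and follows essentially the same route as the paper: both reduce via Corollary \ref{coro_mutation} (together with Remark \ref{remark_2cy}) to deciding whether some diagonal of $\widetilde{\mathcal{C}}$ crosses $x$, both rule this out when the two cells are empty by arguing that such a diagonal would have to be an internal diagonal of one of the two cells bordered by $c$, and both exhibit the same witness diagonal in the converse (your $\{w_1,v_j\}$ is the paper's $\widetilde{c}=\{v_{j+1},v_i\}$ up to relabeling) when a bordering cell is a clique with at least four vertices. The only cosmetic difference is that the paper checks $\Ext$-projectivity of $x$ in $\mathcal{C}'$ and then uses the $2$-Calabi-Yau property to get $\Ext$-injectivity, whereas you check $\Ext$-injectivity directly.
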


\begin{proof}
Suppose first that $c$ borders two empty cells in $\mathcal{C}$. Then, using the notation in Setup \ref{setup_thm1}, the only diagonals in $\mathcal{C}'$ that $x=\{v_p, v_q\}$ crosses are from the set of diagonals of the form $\{ v_s,v_t \}$ with $s,t\in\{1,\dots,m\}$ that are in $\widetilde{\mathcal{C}}$. However, since $c$ borders two empty cells, such diagonals $\{ v_s,v_t \}$ do not belong to $\widetilde{\mathcal{C}}\subset\mathcal{C}'$. Hence $c$ crosses no diagonals in $\mathcal{C}'$, so that $\Ext^1(x,\mathcal{C}')=0$ and $x$ is $\Ext$-projective in $\mathcal{C}'$, and so also $\Ext$-injective in $\mathcal{C}'$ by Remark \ref{remark_2cy}. By Corollary \ref{coro_mutation}, we then have that $\mathcal{C}'$ is closed under extensions.

Suppose now that one of the cells bordered by $c$ is a clique with at least four vertices. Using the notation in Setup $\ref{setup_thm1}$, without loss of generality say that the cell $v_j<v_{j+1}<\dots<v_{i-1}<v_i$ is a clique with at least four vertices. Then, $x=\{v_p, v_{i-1}\}$ and, since the clique has at least four vertices, we have $v_j<v_{j+1}<v_{i-1}<v_i$. Then the diagonal $\widetilde{c}:=\{ v_{j+1}, v_i \}\in\widetilde{\mathcal{C}}$ crosses $x$ since
\begin{align*}
    v_i<v_{i+1}\leq v_p\leq v_{j-1}<v_j< v_{j+1}< v_{i-1}.
\end{align*}
Then $\Ext^1(x,\widetilde{c})\neq 0$ so that $x$ is not $\Ext$-projective, and so also not $\Ext$-injective, in $\mathcal{C}'$. By Corollary \ref{coro_mutation}, we have that $\mathcal{C}'$ is not closed under extensions.
\end{proof}

\begin{exmp}
In the example illustrated in Figure \ref{fig:ext_proj}, we have that the dissecting diagonal $c$ borders an empty cell and a clique with four vertices. Then, by Proposition \ref{thm_mutation_eg}, the subcategory $\mathcal{C}'$ obtained by removing $c$ and substituting it with $x=\{ v_4,v_2 \}$ is not closed under extensions. In fact, it is easy to see that this does not correspond to a Ptolemy diagram.

However, if the cell to the right of $c$ was empty, then the subcategory obtained by removing $c$ and substituting it with $x=\{ v_1, v_4 \}$ would correspond to a Ptolemy diagram and so it would be closed under extensions.
\end{exmp}

\begin{remark}
Recall that $\mathcal{C}_{A_n}$ is $2$-Calabi-Yau and it has finitely many indecomposables. Hence, whenever $C\in\mathcal{C}$ corresponds to a dissecting diagonal bordering two empty cells, Proposition \ref{thm_mutation_eg} and Theorem \ref{thm_mutZZ_same} imply that $\mathcal{C}'$ is equal to $\mu(\mathcal{C};\mathcal{D})$, \textbf{i.e.} the subcategory obtained by mutating $\mathcal{C}$ with respect to the additive subcategory of $\mathcal{C}$ generated by all the indecomposable $\Ext$-projectives in $\mathcal{C}$ apart from $C$. 
\end{remark}

\subsection{Minimal left almost split morphisms in $\mathcal{C}$ starting at Ext-injectives}
For completeness we state the corresponding results on $\Ext$-injectives. These can be proven using similar arguments to the ones in Section \ref{section35}.

\begin{proposition}\label{thm_inj}
An indecomposable $a$ in $\mathcal{C}$ is $\Ext$-injective if and only if it is a dissecting diagonal.
\end{proposition}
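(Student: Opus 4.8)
The plan is to mirror the proof of Proposition \ref{thm-extproj}, exploiting the fact that the crossing relation between diagonals is symmetric, so that by \ref{cluster2} the vanishing of $\Ext^1(c,a)$ is governed by exactly the same combinatorial condition as the vanishing of $\Ext^1(a,c)$. Thus only the roles of the two arguments of $\Ext^1$ are interchanged relative to the $\Ext$-projective case.

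First I would establish the forward implication. Suppose $a$ is a dissecting diagonal. Then no diagonal in $\mathcal{C}$ crosses $a$, so for every indecomposable $c\in\mathcal{C}$ the corresponding diagonals $\mathfrak{c}$ and $\mathfrak{a}$ do not cross. By \ref{cluster2} this yields $\dim_{\C}\Ext^1(c,a)=\dim_{\C}\Hom(c,\Sigma a)=0$, and since every object of $\mathcal{C}$ is a direct sum of such indecomposables I conclude that $\Ext^1(c,a)=0$ for all $c\in\mathcal{C}$, i.e.\ that $a$ is $\Ext$-injective.

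For the converse I would argue by contrapositive, exactly as in Proposition \ref{thm-extproj} but with the crossing diagonal placed on the source side. If $a$ is not a dissecting diagonal, let $P'$ be the clique to which it belongs; then $P'$ has vertices on both sides of $a$. Joining two such vertices lying on opposite sides of $a$ produces a diagonal $c'$ which crosses $a$ and, because $P'$ is a clique and $\mathcal{C}$ satisfies the Ptolemy condition, lies in $\mathcal{C}$. By \ref{cluster2} we then have $\Ext^1(c',a)=\Hom(c',\Sigma a)\neq 0$, so $a$ is not $\Ext$-injective.

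I expect no real obstacle here: the only point demanding any care is checking that the crossing diagonal $c'$ built in the non-dissecting case genuinely belongs to $\mathcal{C}$, and this is immediate from the Ptolemy property, precisely as in the $\Ext$-projective setting. Alternatively, the statement follows at once by combining Proposition \ref{thm-extproj} with Remark \ref{remark_2cy}: since $\mathcal{T}$ is $2$-Calabi-Yau, $\Ext$-injective and $\Ext$-projective objects of $\mathcal{C}$ coincide, and $\Ext$-projectivity of $a$ has already been shown equivalent to $a$ being a dissecting diagonal.
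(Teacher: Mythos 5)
Your proposal is correct, and in fact it contains two complete proofs. The one-line ``alternative'' you give at the end is precisely the paper's own proof: the paper proves Proposition \ref{thm_inj} by citing Proposition \ref{thm-extproj} together with Remark \ref{remark_2CY_Ext} (the $2$-Calabi-Yau symmetry $\Ext^1(X,Y)\cong D\Ext^1(Y,X)$, which makes $\Ext$-projectives and $\Ext$-injectives coincide). Your primary argument --- mirroring the proof of Proposition \ref{thm-extproj} directly, using that the crossing relation is symmetric so that \ref{cluster2} controls $\Ext^1(c,a)$ just as it controls $\Ext^1(a,c)$, and building a crossing diagonal inside the clique for the contrapositive --- is a genuinely self-contained alternative. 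What it buys is independence from the Calabi-Yau property: it only uses the combinatorial dimension formula \ref{cluster2} and the clique structure of Ptolemy diagrams, so it would survive in a setting where one has the geometric model but not Serre duality in this symmetric form. What the paper's route buys is brevity and the conceptual point that in a $2$-Calabi-Yau category the projective/injective distinction evaporates, so no argument need be repeated. The only step in your direct proof requiring care --- that the constructed crossing diagonal $c'$ lies in $\mathcal{C}$ --- is handled correctly by the clique/Ptolemy property, exactly as in the $\Ext$-projective case.
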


\begin{proof}
This follows from Proposition \ref{thm-extproj} and Remark \ref{remark_2CY_Ext}.
\end{proof}

We present the setup and the dual of Proposition \ref{thm_mras}.

\begin{setup}\label{setup_thm2}
Let $\mathcal{C}$ correspond to a Ptolemy diagram and $a\in \mathcal{C}$ be $\Ext$-injective and indecomposable. Then $a$ is a dissecting diagonal by Proposition \ref{thm_inj}. Let the vertices of the two cells bordered by $a$ be $v_1<v_2< \cdots <v_m$ and $a=\{v_r,v_s\}$. Set $v_0:=v_m$.

Choose $v_p$ minimal in $[v_r^+,v_s^-]$ such that $b_0:=\{ v_s,v_p \}\in \mathcal{C}$ and $v_q$ minimal in $[v_s^+,v_r^-]$ such that $b_1:=\{ v_r, v_q \}\in\mathcal{C}$.

\end{setup}

\begin{proposition}\label{thm_dual_eg}
In the situation of Setup \ref{setup_thm2}, let
\begin{align*}
    \alpha=\begin{psmallmatrix}\alpha_0\\\alpha_1\end{psmallmatrix}: a\rightarrow b_0\oplus b_1,
\end{align*}
where $\alpha_0,\alpha_1$ are non-zero morphisms (unless $b_0$ or $b_1$ are zero). Then $\alpha$ is a minimal left almost split morphism in $\mathcal{C}$ and
\begin{align}
a\xrightarrow{\alpha=\begin{psmallmatrix}\alpha_0\\\alpha_1\end{psmallmatrix}} b_0\oplus b_1\xrightarrow{\nu=(\nu_0,\nu_1)} z\rightarrow \Sigma a \tag{\ddag}
\end{align}
is a triangle in $\mathcal{T}$ with $z=\{ v_p,v_q \}$ indecomposable not in $\mathcal{C}$ and $\nu$ a $\mathcal{C}$-cover of $z$. In other words, $(\ddag)$ is a right-weak Auslander-Reiten triangle in $\mathcal{C}$.
\end{proposition}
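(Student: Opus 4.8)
The plan is to dualize the proof of Proposition \ref{thm_mras} line by line, replacing the criterion \ref{cluster9} for morphisms \emph{into} an indecomposable by the criterion \ref{cluster8} for morphisms \emph{out of} one, and invoking Theorem \ref{prop_mras_dual} in place of Theorem \ref{prop_mras} at the end. The three things to establish are that $\alpha$ is left almost split in $\mathcal{C}$, that $\alpha$ is left minimal, and that the completion of $\alpha$ to a triangle has third term $z=\{v_p,v_q\}$; the weak Auslander-Reiten property then follows formally.

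First I would pin down which indecomposables of $\mathcal{C}$ receive a nonzero map from $a$. Since $a=\{v_r,v_s\}$ is a dissecting diagonal (Proposition \ref{thm_inj}), no indecomposable $d\in\mathcal{C}$ may cross it. Combining this with \ref{cluster8}, which says $\Hom_{\mathcal{T}}(a,d)\neq 0$ exactly when $d$ has one endpoint under each of the two blue arcs of Figure \ref{fig:fig_morphisms_from_x}, forces any such $d$ to share an endpoint with $a$, i.e. $d=\{v_s,t\}$ with $t\in[v_r^+,v_s^-]$ or $d=\{v_r,u\}$ with $u\in[v_s^+,v_r^-]$. In particular $b_0=\{v_s,v_p\}$ and $b_1=\{v_r,v_q\}$ are of this shape and each satisfies $\dim_{\C}\Hom_{\mathcal{T}}(a,b_i)=1$, so the entries $\alpha_0,\alpha_1$ are the essentially unique nonzero maps.

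Next I would verify that $\alpha$ is left almost split. Taking a nonzero, non-split-mono $\alpha':a\to a'$ with $a'$ indecomposable, the dichotomy above puts $a'$ on one of the two sides; say $a'=\{v_s,t\}$ with $t\in[v_r^{++},v_s^-]$. The factorization half of \ref{cluster8} (the red arcs) shows $\alpha'$ factors through any $s=\{s_0,s_1\}$ meeting both red arcs; by the \emph{minimality} of $v_p$ in $[v_r^+,v_s^-]$ with $\{v_s,v_p\}\in\mathcal{C}$ one gets $t\geq v_p$, so $b_0$ meets both arcs and $\alpha'$ factors through $\alpha_0$, and symmetrically through $\alpha_1$ in the other case. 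For left minimality I would compute $\End(b_0\oplus b_1)$: checking with the cyclic order \ref{cluster7} that $b_0$ does not cross $\Sigma^{-1}b_1$ and $b_1$ does not cross $\Sigma^{-1}b_0$ forces (via $2$-Calabi-Yau Serre duality) $\Hom_{\mathcal{T}}(b_0,b_1)=\Hom_{\mathcal{T}}(b_1,b_0)=0$, so any $\varphi:b_0\oplus b_1\to b_0\oplus b_1$ is diagonal with scalar entries; then $\varphi\circ\alpha=\alpha$ forces both scalars to be $1$ and $\varphi$ to be an isomorphism. Hence $\alpha$ is minimal left almost split.

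Finally I would read off $z$ and the triangle from \ref{cluster4}. The diagonal $z:=\{v_p,v_q\}$ crosses $a$, since its endpoints lie on the two sides $[v_r^+,v_s^-]$ and $[v_s^+,v_r^-]$, and the triangle attached by \ref{cluster4} to the crossing pair $(a,z)$ has middle term $b_0\oplus b_1$, so it is precisely $(\ddag)$; as $z$ crosses a dissecting diagonal, $z\notin\mathcal{C}$. Theorem \ref{prop_mras_dual} then upgrades $(\ddag)$ to a right-weak Auslander-Reiten triangle, giving that $z$ is indecomposable and that $\nu$ is a $\mathcal{C}$-cover of $z$. The main obstacle I anticipate is purely combinatorial bookkeeping: getting the chain of cyclic inequalities among $v_r,v_s,v_p,v_q$ and their $\pm$-shifts exactly right so that the minimality of $v_p,v_q$ really yields the factorizations and the two non-crossing conditions for left minimality both hold. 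Relative to Setup \ref{setup_thm1}, both the roles of the shared endpoints and the change from \emph{maximal} to \emph{minimal} are swapped, and that is where an endpoint or orientation slip is most likely to creep in.
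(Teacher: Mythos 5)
Your proposal is correct and is essentially the paper's own argument: the paper states Proposition \ref{thm_dual_eg} without proof, as the dual of Proposition \ref{thm_mras} that ``can be proven using similar arguments'' to Section \ref{section35}, and your line-by-line dualization (using \ref{cluster8} in place of \ref{cluster9}, swapping maximality for minimality, and invoking Theorem \ref{prop_mras_dual} instead of Theorem \ref{prop_mras}) is exactly that argument. Your two small blemishes are harmless: the interval for $t$ should be $[v_r^+,v_s^{--}]$ rather than $[v_r^{++},v_s^-]$ (your minimality step covers this case anyway), and the vanishing $\Hom_{\mathcal{T}}(b_0,b_1)=\Hom_{\mathcal{T}}(b_1,b_0)=0$ follows directly from \ref{cluster2} applied to the non-crossing pairs, with no need to invoke Serre duality.
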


\begin{proposition}
In the situation of \ref{setup_thm2}, consider the triangle $a\rightarrow b_0\oplus b_1\rightarrow z\rightarrow \Sigma a$ from Proposition \ref{thm_dual_eg}. Let Let $\underline{\mathcal{C}}$ be the additive category with $\text{Ind }\underline{\mathcal{C}}=\text{Ind }(\mathcal{C})\setminus a$ and $\mathcal{C}'':=\text{add }(\underline{\mathcal{C}}\cup z)$. Then $\mathcal{C}''\subseteq \mathcal{T}$ is closed under extensions if and only if $a$ borders two empty cells in the Ptolemy diagram corresponding to $\mathcal{C}$.
\end{proposition}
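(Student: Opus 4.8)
The plan is to reduce the whole statement to a crossing-count computation via the dual of Corollary \ref{coro_mutation}. First I would check that the hypotheses of that corollary hold here: $\mathcal{T}=\mathcal{C}_{A_n}$ is $2$-Calabi-Yau, $a\in I(\mathcal{C})$ is indecomposable, $\mathcal{C}$ is functorially finite since it has finitely many indecomposables, and $\End(a)\cong\End(z)\cong k$ by Remark \ref{remark_2cy}. Hence $\mathcal{C}''$ is closed under extensions if and only if $z\in P(\mathcal{C}'')$; and since $\mathcal{T}$ is $2$-Calabi-Yau we have $P(\mathcal{C}'')=I(\mathcal{C}'')$ by Remark \ref{remark_2cy}, while by property \ref{cluster2} the condition $z\in P(\mathcal{C}'')$ is equivalent to the purely geometric statement that the diagonal $z=\{v_p,v_q\}$ crosses no diagonal lying in $\mathcal{C}''$. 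So the proposition becomes: $z$ crosses no diagonal of $\mathcal{C}''$ if and only if $a$ borders two empty cells.

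For the forward direction I would assume $a$ borders two empty cells. Because $a$ is a dissecting diagonal and every diagonal of $\underline{\mathcal{C}}\subseteq\mathcal{C}$ is non-crossing with the dissecting diagonals, any diagonal of $\mathcal{C}''$ crossing $z$ is trapped inside the union of the two cells bordered by $a$; that is, it must have both endpoints among $v_1,\dots,v_m$. But when both cells are empty, the only diagonals of $\mathcal{C}$ with endpoints among $v_1,\dots,v_m$ are the dissecting diagonals bounding these cells, and since $z$ meets $a$ only by crossing (and stays inside each cell on either side), it crosses none of them. Thus $z$ crosses nothing in $\mathcal{C}''$, giving $z\in P(\mathcal{C}'')$, and $\mathcal{C}''$ is extension closed. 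This mirrors the forward half of Proposition \ref{thm_mutation_eg}, read through the dual corollary.

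For the converse I argue by contraposition, dualizing the clique computation of Proposition \ref{thm_mutation_eg}. If $a$ does not border two empty cells, one of its cells is a clique with at least four vertices, say $v_r<v_{r+1}<\cdots<v_{s-1}<v_s$. Since all diagonals of a clique lie in $\mathcal{C}$, the minimality condition in Setup \ref{setup_thm2} forces $v_p=v_{r+1}$, so $z=\{v_{r+1},v_q\}$. I would then exhibit the diagonal $\widetilde{a}:=\{v_r,v_{s-1}\}\in\underline{\mathcal{C}}$, which is a genuine diagonal of the clique (as $v_{s-1}\ge v_{r+2}$) and differs from $a$ because $v_{s-1}\neq v_s$. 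Using the cyclic order of \ref{cluster7}, the separation $v_r<v_{r+1}<v_{s-1}<v_s$ together with $v_q\in[v_s^+,v_r^-]$ shows that $\widetilde{a}$ separates $v_{r+1}$ from $v_q$, so $\widetilde{a}$ crosses $z$. By property \ref{cluster2} this gives $\Ext^1(z,\widetilde{a})\neq 0$, hence $z\notin P(\mathcal{C}'')$, and by the dual of Corollary \ref{coro_mutation} the category $\mathcal{C}''$ is not closed under extensions.

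The main obstacle is the bookkeeping in the converse: one must track the minimality conditions on $v_p,v_q$ from Setup \ref{setup_thm2} and the cyclic inequalities of \ref{cluster7}--\ref{cluster9} carefully enough to guarantee simultaneously that $\widetilde{a}$ is a diagonal rather than an edge of the clique, that it lies in $\underline{\mathcal{C}}$, and that it genuinely crosses $z=\{v_p,v_q\}$, exactly as the displayed inequality chain accomplishes in Proposition \ref{thm_mutation_eg}. Everything else is formal, following from the dual corollary and the crossing dictionary of \ref{cluster2}.
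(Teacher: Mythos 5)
Your proposal is correct and is exactly the argument the paper intends: the paper states this proposition without proof as the dual of Proposition \ref{thm_mutation_eg}, and your reduction via the dual of Corollary \ref{coro_mutation} (using Remark \ref{remark_2cy} and the crossing dictionary of \ref{cluster2}) followed by the empty-cell case and the clique case dualizes that proof step by step. In particular, your witness $\widetilde{a}=\{v_r,v_{s-1}\}$ crossing $z=\{v_{r+1},v_q\}$ is precisely the mirror image of the paper's diagonal $\widetilde{c}=\{v_{j+1},v_i\}$ crossing $x=\{v_p,v_{i-1}\}$, with the maximality conditions of Setup \ref{setup_thm1} replaced by the minimality conditions of Setup \ref{setup_thm2}.
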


\end{document}